\titleformat{\subsection}[runin]
  {\normalfont\large\bfseries}{\thesubsection}{0em}{}
  \titleformat{\subsubsection}[runin]
  {\normalfont\normalsize\bfseries}{\thesubsubsection}{0em}{}
\newcommand\restr[2]{{
  \left.\kern-\nulldelimiterspace 
  #1
  \vphantom{\big|}
  \right|_{#2}
  }}
\newtheorem{theorem}{Theorem}
\newtheorem{corollary}{Corollary}[subsection]
\newtheorem{lemma}{Lemma}[subsection]
\newtheorem{prop}{Proposition}[section]
\theoremstyle{definition}
\newtheorem*{rem}{Remark}
\newtheorem*{definition}{Definition}
\newcommand{\C}{\mathbb{C}}
\newcommand{\fm}{\mathfrak{m}}
\newcommand{\fn}{\mathfrak{n}}
\newcommand{\gl}{\mathrm{GL}}
\newcommand{\fs}{\mathfrak{s}}
\newcommand{\Irr}{\mathfrak{Irr}}
\newcommand{\Rep}{\mathfrak{Rep}}
\newcommand{\Ho}{\mathrm{Hom}}
\newcommand{\cus}{\mathrm{cusp}}
\newcommand{\Scu}{\mathfrak{SC}}
\newcommand{\scus}{\mathrm{scusp}}
\newcommand{\ol}[1]{\overline{#1}}
\newcommand{\De}{\Delta}
\newcommand{\Ms}{\mathcal{M}ult_R}
\newcommand{\lp}{\preceq}
\newcommand{\lpp}{\prec}
\newcommand{\gp}{\succeq}
\newcommand{\ind}{\mathrm{Ind}}
\newcommand{\Ind}{\mathrm{ind}}
\newcommand{\hra}{\hookrightarrow}
\newcommand{\tfm}{{\widetilde{\fm}}}
\newcommand{\ql}{{\ol{\mathbb{Q}}_\ell}}
\newcommand{\fl}{{\ol{\mathbb{F}}_\ell}}
\newcommand{\zl}{{\ol{\mathbb{Z}}_\ell}}
\newcommand{\trho}{{\Tilde{\rho}}}
\newcommand{\rl}{\mathrm{r}_\ell}
\newcommand{\Z}{\mathrm{Z}}
\newcommand{\ain}[3]{#1\in\{#2,\ldots,#3\}}
\newcommand{\fk}{\mathfrak{k}}
\newcommand{\TD}{{\widetilde{\De}}}
\newcommand{\ZZ}{\mathbb{Z}}
\newcommand{\ra}{\rightarrow}
\newcommand{\sra}{\twoheadrightarrow}
\newcommand{\db}{{\textbf{d}}}
\newcommand{\dd}{\,\mathrm{d}_l}
\newcommand{\Cc}{\mathrm{C}}
\newcommand{\CC}{{\mathbb{C}}}
\newcommand{\scu}{\mathfrak{C}^\square}
\newcommand{\Sc}{\mathcal{S}}
\newcommand{\cc}{^\mathfrak{c}}
\newcommand{\Msl}{\mathcal{M}ult_{\fl}}
\newcommand{\Msq}{\mathcal{M}ult_{\ql}}
\newcommand{\Wc}{\mathcal{W}}
\newcommand{\tfn}{\widetilde{\fn}}
\newcommand{\abs}{\lvert-\lvert}
\newcommand{\Ff}{\mathrm{F}}
\newcommand{\of}{\mathfrak{O}_\Ff}
\newcommand{\NN}{\mathbb{N}}
\newcommand{\cusp}{\mathfrak{C}}
\newcommand{\Kr}{\mathcal{K}}
\newcommand{\tc}{\mathcal{T}}
\newcommand{\Msi}{\mathcal{M}ult_{R,\square}}
\newcommand{\MsQ}{\mathcal{M}ult(Q)}
\newcommand{\grdim}{\mathrm{grdim}}
\newcommand{\sccp}{\Sc_{gen,\psi}}
\newcommand{\sccpu}{\Sc_{gen,\psi}^\cup}
\newcommand{\Scp}{\Sc_{\psi}}
\newcommand{\Msli}{\mathcal{M}ult_{\fl,\square}}
\title{The generic extension map and modular standard modules}
\author{Johannes Droschl}
\date{}
\begin{document}
\maketitle
\begin{abstract}
    In this paper we study two classes of $\ell$-modular standard modules of the general linear group. The first class is obtained by reducing existing standard modules over $\ql$ to $\fl$ with respect to their natural integral structure. The second class is obtained by studying the generic extension map of the cyclical quiver, which was motivated by the construction of certain monomial bases of quantum algebras. In the latter case we also manage to prove a modular version of the Langlands classification, similar to the work of Langlands and Zelevinsky over $\CC$.
    We moreover compute the corresponding $\ell$-modular Rankin-Selberg $L$-function and check that they agree with the $L$-functions of their $\Cc$-parameters constructed by Kurinczuk and Matringe.
\end{abstract}
\section{Introduction}
Let $\Ff$ be a non-archimedean local field with ring of integers $\of$ and residue field $\mathrm{k}$, whose cardinality we denote by $q$, and $G$ be a reductive group over $\Ff$. We also fix a prime $\ell$ not dividing $q$ together with a field of coefficients $R\in\{\fl,\ql,\CC\}$. The classification of irreducible smooth representations of $G(\mathrm{F})$ over $R$ is of great importance in establishing the local Langlands correspondence. If $R=\CC$, such a classification has first been achieved by Langlands by what is now known as the \emph{Langlands classification}. His classification associates to each irreducible smooth representation $\pi$ of $G(\Ff)$ a parabolic subgroup $P\subseteq G$, a tempered representation $\sigma$ of the $\Ff$-points of the Levi-component $M$ of $P$ and a character $\eta$ of the center of $M$ satisfying certain inequalities.
Then $\pi$ can be realized as the unique quotient of the parabolically induced representation $\Sc(\pi)=\Ind_{P(\Ff)}^{G(\Ff)}(\sigma\otimes\eta)$, known as the standard module associated to $\pi$, and the triple $(P,\sigma,\eta)$ is unique up to certain natural symmetries. The analytical nature of the Langlands classification as sketched above makes it clear that it is a non-trivial task to extend it to fields of non-zero characteristics, especially as soon as one ventures beyond the banal setting of \cite{MinSecII}. Therefore the construction of modular standard modules remained an open problem. The proposed definitions given in this paper avoid the above issue by considering standard modules which are, in the modular case, not necessarily induced representations, but rather certain subrepresentations of the space of Whittaker functions. 

To motivate this approach we return for a moment to the problem of the classification of irreducible modular representations. We recall that in the case $G=\gl_n$ Bernstein and Zelevinsky gave in \cite{ZelII}, \cite{BerZel}, \cite{BerZelI} a second classification of irreducible complex representation in terms of combinatorial objects called \emph{multisegments}. To this date this complete second classification is a feature only present in the representation theory of $\gl_n$ and its inner forms. Certain extensions of these ideas to more general groups have been pursued \cite{HanMui}.
Furthermore, the Langlands- and Zelevinsky-classification are related by the Aubert-Zelevinsky involution, an involution on the set of isomorphism classes of the irreducible representations of $\gl_n(\Ff)$, see \cite{ZelII}, \cite{Aub}. It thus fell upon the classification of Bernstein and Zelevinsky in the case $G=\gl_n$ to offer itself to generalization in the modular case.
In the works of Vignéras \cite{VigInduced} and M\'inguez-S\'echerre \cite{MinSec}, this feat has been accomplished by extending their ideas. In general, the study of irreducible representations of $\gl_n$ over $\fl$ shows many similarities to the study of complex representations, but with several key differences as we will see later. So is for example no longer every cuspidal representation supercuspidal.
 At this point it has to be noted that the usefulness of standard modules in the complex case goes much further than the simple classification of irreducible representations. A prominent example, and the main motivator of this paper, are the local \emph{Rankin-Selberg L-factors} $L(s,\rho,\rho')$ of \cite{JacShaII}, which are defined for pairs of complex representations of Whittaker type $(\rho,\rho')$ and they are rational functions in $q^s$. In particular, if $G=\gl_n$, every standard module $\Sc(\pi)$ is of Whittaker type, which in turn allows to define the invariant $L(s,\Sc(\pi),\Sc(\pi'))$ for any pair of irreducible representations of $\gl_n$. It comes as no surprise that the $L$-factor $L(s,\Sc(\pi),\Sc(\pi'))$ can be explicitly computed in terms of the Langlands parameters of $\pi$ and $\pi'$. 

Passing from the complex to the modular setting, 
 the definition of local Rankin-Selberg $L$-factors has been extended in \cite{KurMat} to the more general fields $R$ we are going to consider in this paper. The authors again associate to a pair of representations of Whittaker type an $L$-factor $L(X,\pi,\pi')$, which is a rational function in $X$ and plugging in $X=q^{-s}$ in the $\CC$-case recovers the original construction of \cite{JacPiaSha}. The so obtained Rankin-Selberg L-factors satisfy a functional equation, which in turn gives rise to 
 $\epsilon$- and $\gamma$-factors associated to the pair $(\pi,\pi')$. We also recall that in \cite{KurMatII} the authors associate to each irreducible representation of $\gl_n(\Ff)$ a so-called $\Cc$-parameter $\Cc(\pi)$, which is a modular version of the Langlands parameter. Moreover, they equip $\Cc$-parameters with a tensor-product $\otimes_{ss}$ and define their $L$-, $\epsilon$- and $\gamma$- factors. 

In this paper we hope to initiate the investigation of possible candidates for standard modules in the modular setting and their properties. We do this by giving two natural possible candidates for standard modules, one obtained by reducing the ones defined over $\ql$ to $\fl$, the other one by considering the so-called \emph{generic extension map}, which has its root in the analysis of certain (quantum)-algebras associated to certain Dynkin- or affine Dynkin-quivers. We will prove that the former always contain the latter and note that at the moment the evidence points towards them being equal, although we do not treat this question in this paper.
 The above discussed properties give us furthermore natural constraints our potential standard modules should satisfy. Firstly, they should admit a modular formulation of the Langlands classification and secondly, they should allow us to define Rankin-Selberg L-functions, which should be explicitly computable using the $\Cc$-parameters of the underlying irreducible representations.
 The contents of this paper can thus be roughly divided into three parts, firstly the construction of standard modules, secondly establishing certain representation theoretic properties and finally the computation of their Rankin-Selberg $L$-factors. 

To state our results more precisely, we need to introduce some notation. 
Let $\psi\colon \Ff\ra R$ be a smooth, additive character and recall the space of Whittaker-functions 
\[\Wc(\psi)\coloneq\]\[ \{f\colon G_n\ra R:\, f\left(\begin{pmatrix}
    1&u_{1,2}&\ldots&\ldots\\
    0&\ddots&\ldots&\ldots\\
    0&\ldots&1&u_{n-1,n}\\
    0&\ldots&0&1
\end{pmatrix}g\right)=\psi(\sum_{i=1}^{n-1}u_{i,i+1})f(g),\, f\text{ locally constant}\}.\]
We also recall that a smooth representation $\pi$ of $G_n$ of finite length is called of Whittaker type if \[\dim_R\mathrm{Hom}_{G_n}(\pi,\Wc(\psi))=1,\] and in this case we denote its image by $\Wc(\pi,\psi)$. We let \[\Rep_{W,\psi,n}=\{\Wc(\pi,\psi):\pi\text{ a representation of Whittaker type of }G_n\}.\]Let us recall that to a cuspidal representation of $G_m$ we can associate $o(\rho)\in\NN\cup\{\infty\}$, with $o(\rho)=\infty$ if and only if $\mathrm{char}(R)=0$ and if $o(\rho)$ is finite, it is the minimal $k\in \ZZ_{>0}$ such that $\rho\cong\rho\abs^k$. We let moreover be $e(\rho)=\ell$ if $o(\rho)=1$ and $e(\rho)=o(\rho)$ otherwise.
To two integers $a\le b$ and $\rho$ cuspidal, we associate a segment (over R) $[a,b]_\rho=(\rho\abs^a,\ldots,\rho\abs^b)$ and we identify $[a,b]_\rho$ and $[a+o(\rho),b+o(\rho)]_\rho$. We also write $[a,b]_\rho^\lor=[-b,-a]_{\rho^\lor}$.
A finite formal sum of segments is called multisegment and we denote the set of multisegments over $R$ by $\Ms$, to which we extend the operation $(-)^\lor$. A multisegment is called aperiodic if it does not contain a sub-multisegment of the following form
\[[a,b]_\rho+[a+1,b+1]_\rho+\ldots+[a+e(\rho)-1,b+e(\rho)-1]_\rho\] and we denote the set of aperiodic multisegments by $\Ms^{ap}$. We decompose any multisegment $\fm=\fm_b+\fm_{nb},$ where $\fm_{nb}$ consists of those $[a,b]_\rho$ with $o(\rho)=1$. We let $\Msi^{ap}$ be the set of aperiodic multisegments such that $\fm=\fm_{b}$. Note that if $\mathrm{char}(R)=0$ all multisegments are aperiodic and $\Msq=\mathcal{M}ult_{\ql,\square}^{ap}$.
Then there exists a bijection, see \cite{MinSecIV} and \cite{MinSec},
\[\langle-\rangle\colon \Ms^{ap}\ra \Irr(R).\]
The definition of $\Msi^{ap}$ is motivated by the following facts.
If we denote as usual by $\times$ the normalized parabolic induction, then $\langle\fm\rangle\cong \langle\fm_b\rangle\times\langle\fm_{nb}\rangle$ and \[L(X,\Cc(\langle\fm\rangle)\otimes_{ss}\Cc(\langle\fn\rangle))=L(X,\Cc(\langle\fm_{b}\rangle)\otimes_{ss}\Cc(\langle\fn_{b}\rangle)).\]
In particular, representations with non-trivial $L$-functions are precisely those coming from $\Msi^{ap}$.
The set $\Ms^{ap}$ moreover admits an order, denoted by $\lp$, which comes from the \emph{degeneration order} on certain quiver-varieties, which we will discuss in a moment.
Our goal is then to construct a map
\[\Sc_\psi\colon \Ms^{ap}\ra \Rep_{W,\psi}=\bigcup_{n\in \NN}\Rep_{W,\psi,n}\]
satisfying, among others, the following properties.
\begin{enumerate}
    \item $L(X,\Sc_\psi(\fm)\otimes \Sc_{\psi^{-1}}(\fn))=L(X,\Cc(\langle\fm\rangle)\otimes\Cc(\langle\fn\rangle))$.
    \item $\Sc_\psi(\fm)$ admits $\langle\fm\rangle$ as a quotient and $\langle\fm\rangle$ appears in the Jordan-Hölder decomposition of $\Sc_\psi(\fm)$ with multiplicity one.
    \item $\Sc_\psi(\fm)$ admits $\langle\fm\rangle$ as its unique irreducible quotient.
\end{enumerate}
Property (2) and (3) would imply that $\dim_R\mathrm{Hom}_{G_n}(\Sc_\psi(\fm),\Sc_{\psi^{-1}}(\fm^\lor)^\lor)=1$ and every morphism in this space would factor through $\langle\fm\rangle$. The description of irreducible representation in the style of property (2) and (3) is also known as \emph{Langlands classification}, and has not yet been achieved over $\fl$, unlike in the case $\mathrm{char}(R)=0$, where the corresponding statements have first been proved in \cite{ZelII}, as was mentioned above.
In this paper we will construct such a map $\Sc_\psi$ satisfying (1) and (2), 
as well as a map \[\Sc_{gen,\psi}\colon \Msi^{ap}\ra \Rep_{W,\psi}\] satisfying properties (2) and (3). We show moreover that for $\fm\in \Msi$, $\Sc_{gen,\psi}(\fm)\subseteq\Sc_\psi(\fm)$. If $\mathrm{char}(R)=0$, these two constructions agree.

We start with a sketch of the construction of $\Sc_\psi$. If $\trho$ is a cuspidal representation over $\ql$ admitting an integral structure, we denote its reduction mod $\ell$ by $\rl(\trho)$. If $\rl(\trho)$ is again a cuspidal representation $\rho$, we say $\trho$ is a lift of $\rho$. It is then straightforward to define a lift $\tfm\in \Msq$ of $\fm\in \Msl$ by firstly saying a segment $[a',b']_\trho$ is a lift of $[a,b]_\rho$ if $\trho$ is a lift of $\rho$, $a=a'\mod o(\rho)$ and $b-a=b'-a'$, and then extending this notion linearly to all multisegments. In this case one can equip $\Sc_\psi(\tfm)$ with its natural integral structure given by those Whittaker functions which take values in $\zl$, and we denote the reduction mod $\ell$ of this specific integral structure by $\overline{\Sc_\psi(\tfm)}$.
For $\fm$ an aperiodic multisegment over $\fl$ we then denote \[\Sc_\psi(\fm)\coloneq \bigcap_{\tfm\text{ lift of }\fm}\overline{\Sc_\psi(\tfm)},\] where the intersection was taken in $\Wc(\psi)$. 
\begin{theorem}
    The map $\Sc_\psi\colon \Msl^{ap}\ra \Rep_{W,\psi}$ satisfies property (1) and if $\fm\in \mathcal{M}ult_{\fl,\square}^{ap}$, then $\langle\fm\rangle$ is a quotient of $\Sc_\psi(\fm)$ and appears with multiplicity $1$ in it.
\end{theorem}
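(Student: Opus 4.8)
The plan is to deduce both assertions from the known $\ql$-theory by transport through the integral structures, concentrating the genuinely $\ell$-modular content into two lemmas of combinatorial and lattice-theoretic type. I would begin by checking that $\Sc_\psi(\fm)$ is really an object of $\Rep_{W,\psi}$: it is a subrepresentation of $\Wc(\psi)$, of finite length because each $\overline{\Sc_\psi(\tfm)}$ is (the reduction of a finite-length admissible $\ql$-representation along a lattice has finite length), and it is non-zero because the reduction of a suitable integral Whittaker function lies in every $\overline{\Sc_\psi(\tfm)}$ --- for $\fm\in\mathcal{M}ult_{\fl,\square}^{ap}$ one may take the one coming from $\Sc_{gen,\psi}(\fm)$; that its space of Whittaker functionals is one-dimensional will drop out of the cosocle analysis below. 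Next, since $\ell$-modular Rankin--Selberg $L$-factors are multiplicative under parabolic induction, and $\Cc$-parameters factor correspondingly as recalled in the introduction, property (1) reduces to the case where $\fm$ and $\fn$ are supported on a single cuspidal line; on a line whose cuspidal is non-banal ($o(\rho)=1$) both sides equal $1$ --- on the $\Cc$-side by the displayed identity, on the representation side by the triviality of $\ell$-modular Rankin--Selberg $L$-factors off the banal locus (\cite{KurMat}) --- so one is reduced to $\fm,\fn\in\mathcal{M}ult_{\fl,\square}^{ap}$.

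For property (1) I would establish two divisibilities. Over $\ql$ one has $L(X,\Sc_\psi(\tfm)\otimes\Sc_{\psi^{-1}}(\tfn))=L(X,\Cc(\langle\tfm\rangle)\otimes_{ss}\Cc(\langle\tfn\rangle))$ for each pair of lifts: this is the classical expression of the $L$-factor of a standard module as a product over the linked pairs of segments in its Langlands data. Every Whittaker function in $\Sc_\psi(\fm)$, resp. $\Sc_{\psi^{-1}}(\fn)$, is the reduction of a $\zl$-valued Whittaker function on a lift, so each of its Rankin--Selberg integrals is the reduction of the corresponding $\ql$-integral; hence $L(X,\Sc_\psi(\fm)\otimes\Sc_{\psi^{-1}}(\fn))$ divides $\rl\bigl(L(X,\Sc_\psi(\tfm)\otimes\Sc_{\psi^{-1}}(\tfn))\bigr)$ for every pair of lifts, and therefore divides their greatest common divisor. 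The first lemma is the combinatorial identity that this greatest common divisor, over all lifts $\tfm$ of $\fm$ and $\tfn$ of $\fn$, equals $L(X,\Cc(\langle\fm\rangle)\otimes_{ss}\Cc(\langle\fn\rangle))$; this is proved from the explicit description of the $\Cc$-parameters and their $L$-factors in \cite{KurMatII} together with the fact that a lift of a segment $[a,b]_\rho$ moves its endpoints only modulo $o(\rho)$. This gives ``$L(X,\Sc_\psi(\fm)\otimes\Sc_{\psi^{-1}}(\fn)) \mid L(X,\Cc(\langle\fm\rangle)\otimes_{ss}\Cc(\langle\fn\rangle))$''. The reverse divisibility follows from the inclusion $\Sc_{gen,\psi}(\fm)\subseteq\Sc_\psi(\fm)$ (and likewise for $\fn$): a subobject affords fewer Rankin--Selberg integrals, so its $L$-factor divides that of $\Sc_\psi(\fm)$, while by property (1) for $\Sc_{gen,\psi}$ the former already equals $L(X,\Cc(\langle\fm\rangle)\otimes_{ss}\Cc(\langle\fn\rangle))$; alternatively one exhibits inside the intersection defining $\Sc_\psi(\fm)$ the reductions of the $\ql$-Whittaker functions realising the poles on the $\ql$-side.

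For property (2) fix $\fm\in\mathcal{M}ult_{\fl,\square}^{ap}$. From $\Sc_{gen,\psi}(\fm)\subseteq\Sc_\psi(\fm)$ and property (2) for $\Sc_{gen,\psi}$ we get $[\Sc_\psi(\fm):\langle\fm\rangle]\ge 1$; the point is to prove $[\Sc_\psi(\fm):\langle\fm\rangle]\le 1$ and that $\langle\fm\rangle$ is a quotient. For the multiplicity bound I would use $[\Sc_\psi(\fm):\langle\fm\rangle]\le[\overline{\Sc_\psi(\tfm)}:\langle\fm\rangle]=\sum_{\tilde\fk}m(\tilde\fk,\tfm)\,[\rl(\langle\tilde\fk\rangle):\langle\fm\rangle]$, where $\tilde\fk$ ranges over the Jordan--Hölder indices of the $\ql$-standard module $\Sc_\psi(\tfm)$, with $\tilde\fk\gp\tfm$ in the degeneration order. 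Since distinct lifts of $\fm$ have distinct cuspidal supports over $\ql$, the only summand in which $\tilde\fk$ is itself a lift of $\fm$ is $\tilde\fk=\tfm$, and for a suitable lift it contributes $m(\tfm,\tfm)\,[\rl(\langle\tfm\rangle):\langle\fm\rangle]=1$ (the Langlands quotient occurs once, and the relevant decomposition-matrix entry is $1$); the remaining summands come from those $\tilde\fk$ whose $\ell$-modular reduction is a periodic multisegment, and these are the ones that must be killed --- using the hypothesis that $\fm$ is banal and, crucially, that $\Sc_\psi(\fm)$ is the intersection over \emph{all} lifts, the ``bad'' copies of $\langle\fm\rangle$ appearing in $\overline{\Sc_\psi(\tfm)}$ sit in submodules not shared by the other $\overline{\Sc_\psi(\tfm')}$. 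Finally, that $\langle\fm\rangle$ is a quotient: $\Sc_\psi(\fm)$ lies in $\overline{\Sc_\psi(\tfm)}$, whose cosocle lies among the constituents of $\rl(\langle\tfm\rangle)$, and comparing with $\Sc_{gen,\psi}(\fm)$, whose cosocle is $\langle\fm\rangle$, pins it down; in particular $\dim_{\fl}\mathrm{Hom}_{G_n}(\Sc_\psi(\fm),\Wc(\psi))=1$, so $\Sc_\psi(\fm)\in\Rep_{W,\psi}$ as claimed.

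The main obstacle is the multiplicity-one bound in (2): controlling $[\rl(\langle\tilde\fk\rangle):\langle\fm\rangle]$ for $\tilde\fk$ running over the constituents of a $\ql$-standard module, and showing that passing to the intersection over all lifts of $\fm$ collapses the multiplicity of $\langle\fm\rangle$ to exactly one, forces one to understand not only the decomposition matrices of $\gl_n$ over $\fl$ and the behaviour of the degeneration order $\lp$ under lifting, but also the submodule structure of the reductions $\overline{\Sc_\psi(\tfm)}$, not merely their Jordan--Hölder series. The first lemma --- the gcd computation of reduced Langlands $L$-factors --- is of the same flavour but lighter, as it only involves the combinatorics of linked segments modulo $o(\rho)$.
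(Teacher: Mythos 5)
Your treatment of property (1) can be made to work, but not quite as written: the reduction to a single cuspidal line via ``multiplicativity of the modular Rankin--Selberg $L$-factor under parabolic induction'' is not available over $\fl$ (only the $\gamma$-factor is inductive; the paper instead strips off the non-banal part using \Cref{L:linc} and \Cref{L:derind}), and your ``first lemma'' --- that the gcd over all lifts of the reduced $\ql$-factors equals $L(X,\Cc(\langle\fm\rangle)\otimes_{ss}\Cc(\langle\fn\rangle))$ --- is asserted, not proved, and is in any case superfluous once you invoke the reverse divisibility coming from $\Sc_{gen,\psi}(\fm)\subseteq\Sc_\psi(\fm)$. Note also that this route outsources all of the analytic content to property (1) for $\Sc_{gen,\psi}$; in the paper there is no such shortcut available a priori, since both statements are instances of one theorem (\Cref{T:RSL}), proved by a single induction on $\deg$ and $\lp$ using the extending property, \Cref{L:aperiodic}, \Cref{L:red}, the functional equation (\Cref{L:unit}) and aperiodicity, with \Cref{C:divisibility} supplying the easy divisibility.

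The genuine gaps are in property (2). First, the quotient claim: knowing that $\langle\fm\rangle$ is the cosocle of the subrepresentation $\Sc_{gen,\psi}(\fm)$ gives no surjection $\Sc_\psi(\fm)\sra\langle\fm\rangle$ --- quotients do not propagate from submodules to the ambient module --- and your supporting assertion that the cosocle of $\overline{\Sc_\psi(\tfm)}$ lies among the constituents of $\rl(\langle\tfm\rangle)$ is unjustified, since reduction of a lattice does not commute with taking cosocles. This is exactly the point where the paper needs a new mechanism: the pairing $J_\fm$ of \Cref{P:Jfm}, built from the Rankin--Selberg integrals at $X=1$ normalized by the $L$-factor (hence resting on property (1)), shown to vanish on $C_{c,0}^\infty(\Ff^n)$ by reduction from $\ql$; it yields a map $\Sc_\psi(\fm)\ra\Sc_{\psi^{-1}}(\fm^\lor)^\lor$, and combining its restriction to $\Sc_{gen,\psi}(\fm)$ (nonzero with kernel $\Sigma(\fm)$, by \Cref{T:quotients}) with the duality $\Pi(\fm)^{\mathfrak{c}}\cong\Pi(\fm^\lor)$ and multiplicity one produces $\langle\fm\rangle$ as a quotient of the image of $J_\fm$, hence of $\Sc_\psi(\fm)$. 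Nothing playing this role appears in your proposal. Second, the multiplicity-one bound: you set up the correct estimate through one lift but then declare that the extra summands ``must be killed'' using the intersection over all lifts and the submodule structure of the reductions, and you concede you cannot carry this out. In fact no intersection is needed: the paper deduces multiplicity one already in $\overline{\Sc_\psi(\tfm)}$ for a single lift from \Cref{T:multone} (i.e.\ the decomposition-matrix statement of M\'inguez--S\'echerre) together with the fact that the remaining constituents $\langle\tfk\rangle$ of $\Sc_\psi(\tfm)$ have $\tfk$ strictly below $\tfm$, so their reductions only produce constituents attached to multisegments strictly below $\fm$. So on the two points where the statement goes beyond what you import from the $\Sc_{gen,\psi}$-theory --- the quotient property and the multiplicity bound --- the proposal is incomplete, and the first of these requires an idea (the $J_\fm$-pairing) that is absent from it.
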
 
The proof of this theorem uses our second construction of standard modules, denoted by $\Sc_{gen,\psi}$, to which we come now.
 We will focus on the case $R=\fl$, however the construction works similarly for $\mathrm{char}(R)=0$. Moreover, for the sake of clarity we will focus in the introduction on multisegments of the form $[a_1,b_1]_\rho+\ldots+[a_k,b_k]_\rho$ for a fixed cuspidal $\rho$ with $o(\rho)$>1. We denote this set by $\Msl(\rho)$ and the aperiodic multisegments in it by $\Msl(\rho)^{ap}$. Let $Q$ be the cyclical quiver with $o(\rho)$ vertices and oriented counter-clockwise. 
\[\begin{tikzcd}
	&&&& \overset{o(\rho)}{\bullet} \\
	\\
	\\
	 \underset{1}{\bullet}&&  \underset{2}{\bullet} & {} & \ldots &&  \underset{o(\rho)-2}{\bullet} && \underset{o(\rho)-1}{\bullet}
	\arrow[bend right=20,from=1-5, to=4-1]
	\arrow[from=4-1, to=4-3]
	\arrow[from=4-3, to=4-5]
	\arrow[from=4-5, to=4-7]
	\arrow[from=4-7, to=4-9]
	\arrow[bend right=20,from=4-9, to=1-5]
\end{tikzcd}\]
Then the isomorphism classes of finite dimensional, nilpotent $\CC$-representations of $Q$, denoted by $[Q]$,
are in bijection with $\Msl(\rho)$. The order $\lp$ on multisegments mentioned above is nothing but the degeneration order on $[Q]$ transported to $\Msl(\rho)$.
In \cite{Rin}, \cite{Bon} and \cite{Rein} the authors investigate the so called \emph{generic extension map} $*\colon [Q]\times[Q]\ra[Q]$, which sends the tuple $([M],[N])$ to $[X]$, where $X\in \mathrm{Ext}_Q^1(M,N)$ such that $\dim_\CC\mathrm{Hom}_Q(X,X)$ is minimal.
The so constructed product \[*\colon \Msl(\rho)\times \Msl(\rho)\ra \Msl(\rho)\] is associative. The subset of $\Msl(\rho)$ generated by multisegments of the form $[i,i]_\rho$ is then precisely the set of aperiodic multisegments $\Msl(\rho)^{ap}$. The following result was motivated by similar consideration of monomial basis of Quantum- and Hall algebras, see for example \cite{DenDu} or \cite{Rin}.
\begin{theorem}
    Let $\fm=[i_1,i_1]_\rho*\ldots*[i_k,i_k]_\rho\in \Msl^{ap}(\rho)$. Then
    \[\Sc_{gen,\psi}(\fm)\coloneq \Wc(\rho\abs^{i_1}\times\ldots\times\rho\abs^{i_k},\psi)\] only depends on $\fm$ and satisfies properties (1) and (2).
\end{theorem}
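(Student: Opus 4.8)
\emph{Plan.} The assertion has three parts: that $\Wc(\rho\abs^{i_1}\times\cdots\times\rho\abs^{i_k},\psi)$ does not depend on the factorisation of $\fm$ into simples, property (1), and property (2). Write $\pi$ for $\rho\abs^{i_1}\times\cdots\times\rho\abs^{i_k}$; being a product of cuspidal representations it is of Whittaker type (genericity of parabolic induction for $\gl_n$ over $R$, cf. \cite{MinSec}), so $\Wc(\pi,\psi)=\mathrm{im}(\pi\to\Wc(\psi))$ is a nonzero quotient of $\pi$, itself of Whittaker type. For well-definedness I would argue as follows: any two factorisations of a fixed $\fm$ have the same content (the dimension vector of the associated nilpotent representation of $Q$), hence the same class $[\pi]$ in the Grothendieck group, parabolic induction being commutative there. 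Since Rankin--Selberg $\gamma$-factors are multiplicative with respect to parabolic induction \cite{KurMat}, $\gamma(X,\pi\otimes\tau)=\prod_j\gamma(X,\rho\abs^{i_j}\otimes\tau)$ depends only on $\fm$, for every cuspidal $\tau$ of $G_m$ with $m<n$. By the $\ell$-modular local converse theorem for representations of Whittaker type (see \cite{KurMat} and the references there), the Whittaker model is pinned down by this family of $\gamma$-factors, and a representation of Whittaker type embeds into $\Wc(\psi)$ uniquely; hence the subspace $\Wc(\pi,\psi)\subseteq\Wc(\psi)$ depends only on $\fm$, and $\Sc_{gen,\psi}(\fm)$ is well defined.

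For property (2) the decisive input is the modular analogue of the Reineke / Deng--Du description of the generic extension map: for a factorisation $\fm=[i_1,i_1]_\rho*\cdots*[i_k,i_k]_\rho$, the representation $\pi$ has irreducible cosocle $\langle\fm\rangle$, and $\langle\fm\rangle$ occurs in $\pi$ with multiplicity one. I would obtain this either by transporting the complex statements of \cite{Rein}, \cite{DenDu} using the $\ell$-independence of the bijection $\langle-\rangle$, of the degeneration order $\lp$, and of the relevant decomposition multiplicities (\cite{MinSec}), or by invoking the corresponding statement established earlier in the paper. Granting it, $\Sc_{gen,\psi}(\fm)$, being a nonzero quotient of $\pi$, has $\langle\fm\rangle$ as its unique irreducible quotient, and its Jordan--Hölder constituents form a sub-multiset of those of $\pi$, so $\langle\fm\rangle$ appears in $\Sc_{gen,\psi}(\fm)$ with multiplicity one.

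For property (1), multiplicativity of Rankin--Selberg $L$-factors with respect to parabolic induction \cite{KurMat}, together with the fact that $L(X,-\otimes-)$ depends on the representations only through their Whittaker models, gives
\[
L\big(X,\Sc_{gen,\psi}(\fm)\otimes\Sc_{gen,\psi^{-1}}(\fn)\big)=\prod_{j}\prod_{l}L\big(X,\rho\abs^{i_j}\otimes\rho'\abs^{i'_l}\big),
\]
where $\fn=[i'_1,i'_1]_{\rho'}*\cdots*[i'_{k'},i'_{k'}]_{\rho'}$. On the parameter side, the cuspidal support of $\langle\fm\rangle$ is $\{\rho\abs^{i_j}\}_j$ since $\langle\fm\rangle$ is a constituent of $\pi$, so the semisimplification of $\Cc(\langle\fm\rangle)$ is $\bigoplus_j\Cc(\rho\abs^{i_j})$; as $L(X,-\otimes_{ss}-)$ sees only semisimplifications and is bilinear, $L(X,\Cc(\langle\fm\rangle)\otimes_{ss}\Cc(\langle\fn\rangle))=\prod_{j}\prod_{l}L(X,\Cc(\rho\abs^{i_j})\otimes_{ss}\Cc(\rho'\abs^{i'_l}))$. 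The proof then reduces to the cuspidal equality $L(X,\rho\abs^{i}\otimes\rho'\abs^{i'})=L(X,\Cc(\rho\abs^{i})\otimes_{ss}\Cc(\rho'\abs^{i'}))$, which is part of the construction of $\Cc$-parameters in \cite{KurMatII} (equivalently, it follows by comparing the explicit formulas on both sides).

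I expect the main obstacle to be the modular Reineke-type input underlying property (2): over $\CC$ the identification of the generic extension with the cosocle of the induced representation, and the multiplicity-one statement, rest on the exactness and favourable behaviour of parabolic induction and on orbit-closure geometry in quiver varieties, whereas over $\fl$ parabolic induction is worse behaved, so one must lean on the $\ell$-independence of the combinatorial data — the bijection $\langle-\rangle$, the order $\lp$, the decomposition multiplicities — which is exactly where the aperiodicity hypothesis enters. Well-definedness in the first step is the next most delicate point if one prefers to avoid the $\ell$-modular converse theorem and argue instead through the structure of the generic extension monoid.
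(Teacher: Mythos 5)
Your proposal has genuine gaps in each of the three parts; the most fundamental is well-definedness. The $\gamma$-factor/converse-theorem route cannot work even in principle: by inductivity, the family $\gamma(X,\pi\otimes\tau,\psi)$ depends only on the multiset $\{i_1,\ldots,i_k\}$, i.e.\ on the class $[\pi]$ in the Grothendieck group, whereas the subspace $\Wc(\pi,\psi)\subseteq\Wc(\psi)$ does not: for instance $\Wc(\rho\times\rho\abs,\psi)$ is the model of the irreducible generic constituent $\langle[0,1]_\rho\rangle$ and is in general strictly contained in $\Wc(\rho\abs\times\rho,\psi)$, although the two products have the same Grothendieck class and the same $\gamma$-factors against every $\tau$. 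So no invariant of $[\pi]$ can pin down the subspace; the point of the theorem is that the subspace depends only on the generic extension $\fm$, which is strictly finer than the content. The paper's argument is different: by Ringel's theorem two words with the same generic extension are related by the degenerate Serre relations, and the actual work is to verify these relations at the level of Whittaker subspaces (the equalities $\Wc(\rho\times\rho\times\rho\abs)=\Wc(\rho\times\rho\abs\times\rho)$, $\Wc(\rho\times\rho\abs\times\rho\abs)=\Wc(\rho\abs\times\rho\times\rho\abs)$, the inclusion $\Wc(\rho\times\rho\abs)\subseteq\Wc(\rho\abs\times\rho)$, and their $o(\rho)=2$ analogues), using uniqueness of Whittaker models and commutativity of induction in the Grothendieck group.

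For property (1), $L$-factors are not inductive with respect to parabolic induction — only $\gamma$-factors are (the Kurinczuk--Matringe results quoted in the paper give inductivity for $\gamma$ but only a divisibility statement for $L$), and over $\fl$ the $L$-factor of an induced representation can be strictly smaller than the product of the $L$-factors of its factors. The parameter side is also miscomputed: $\Cc(\langle\fm\rangle)$ is a Deligne representation whose $L$-factor depends on the nilpotent operator, i.e.\ on the segments of $\fm$ (note the $\min$-convention in the definition of $L(X,\Cc'(\De),\Cc'(\De'))$), not merely on the cuspidal support; already for $\fm=\fn=[0,1]_\rho$ your formula produces four Euler factors where the correct value has two. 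The paper's proof of (1) is a genuine induction combining the divisibility $L(X,\tc_\psi(\fm),\tc_{\psi^{-1}}(\fn))^{-1}\mid L(X,\Cc'(\fm),\Cc'(\fn))^{-1}$, derivatives of Whittaker models, the functional equation, and aperiodicity to exclude spurious factors. For property (2) you assume the decisive point — that $\langle\fm\rangle$ is the unique irreducible quotient and occurs with multiplicity one — and propose to transport it from $\CC$ via ``$\ell$-independence of decomposition multiplicities''; but decomposition numbers are not preserved under reduction mod $\ell$, and this modular Langlands-quotient statement is exactly what the paper has to prove, by induction using the extending property, Jacquet restriction and the geometric lemma, together with the compatibility of $*$ with the degeneration order; multiplicity one is then deduced from the inclusion $\Sc_{gen,\psi}(\fm)\subseteq\Sc_\psi(\fm)$ and multiplicity one in the reduction of a lift, not from a multiplicity claim about the full induced representation over $\fl$.
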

This construction allows us to define the desired map $\Sc_{gen,\psi}\colon \Msi^{ap}\ra \Rep_{W,\psi}$.
\begin{theorem}
    Let $\fm,\fm'\in \Msi^{ap}$. Then
    \[\Sc_{gen,\psi}(\fm*\fm')=\Wc(\Sc_{gen,\psi}(\fm)\times \Sc_{gen,\psi}(\fm'),\psi).\]
    Moreover, if $\mathrm{char}(R)=0$, $\Sc_\psi=\Sc_{gen,\psi}$ and if $R=\fl$, $\Sc_{gen,\psi}(\fm)\subseteq \Sc_\psi(\fm)$.
\end{theorem}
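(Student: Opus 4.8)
The plan is to reduce the three assertions to the theorem on the generic extension map (the second displayed theorem) together with the definition of $\Sc_\psi$ as an intersection of reductions of integral structures.

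First I would establish the multiplicativity formula $\Sc_{gen,\psi}(\fm*\fm')=\Wc(\Sc_{gen,\psi}(\fm)\times\Sc_{gen,\psi}(\fm'),\psi)$. The point is that by the associativity of $*$, if $\fm=[i_1,i_1]_\rho*\cdots*[i_k,i_k]_\rho$ and $\fm'=[j_1,j_1]_\rho*\cdots*[j_l,j_l]_\rho$ are aperiodic, then $\fm*\fm'$ is again a product of one-element segments in $\Msi^{ap}$ (one must check aperiodicity is preserved, which follows since the submonoid generated by the $[i,i]_\rho$ is exactly $\Msi^{ap}$), so the previous theorem gives $\Sc_{gen,\psi}(\fm*\fm')=\Wc(\rho\abs^{i_1}\times\cdots\times\rho\abs^{i_k}\times\rho\abs^{j_1}\times\cdots\times\rho\abs^{j_l},\psi)$. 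Since normalized parabolic induction is associative and the Whittaker functor is compatible with $\times$ in the sense that $\Wc(\pi\times\pi',\psi)=\Wc(\Wc(\pi,\psi)\times\Wc(\pi',\psi),\psi)$ for representations of Whittaker type (this is the standard fact that the Whittaker model of an induced representation is computed by the Jacquet integral and depends only on the Whittaker models of the inducing data), the right-hand side equals $\Wc(\Sc_{gen,\psi}(\fm)\times\Sc_{gen,\psi}(\fm'),\psi)$. For general $\fm\in\Msi^{ap}$ not supported at a single $\rho$ one writes $\fm$ as a sum over the cuspidal lines $[\rho]$ and uses that segments on different lines commute under $*$ and that the corresponding induced representations also commute, reducing to the single-line case.

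Next, the characteristic-zero identification $\Sc_\psi=\Sc_{gen,\psi}$: when $\mathrm{char}(R)=0$ every $o(\rho)=\infty$, every multisegment is aperiodic, and the classical theory (Zelevinsky) tells us that the standard module $\Sc_\psi(\fm)$ is the Whittaker model of $\rho\abs^{a_1}\times\cdots$ where the inducing segments are arranged in the Langlands/Zelevinsky order; on the other hand the generic extension product $*$ over $\CC$ recovers precisely this induction, because the ordering that minimizes $\dim\mathrm{Hom}_Q(X,X)$ is the one for which the parabolic induction is standard (this is the content of the cited work of Reineke and Bongartz). So both sides are the same induced representation and hence have the same Whittaker model. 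No intersection is needed since there is nothing to reduce mod $\ell$.

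Finally, the inclusion $\Sc_{gen,\psi}(\fm)\subseteq\Sc_\psi(\fm)$ for $R=\fl$ and $\fm\in\Msi^{ap}$. Here I would argue lift by lift: fix a lift $\tfm\in\Msq$ of $\fm$. Writing $\fm=[i_1,i_1]_\rho*\cdots*[i_k,i_k]_\rho$, each $[i_r,i_r]_\rho$ lifts to a segment $[\tilde i_r,\tilde i_r]_{\trho}$, and one checks that the $\ql$-induction $\trho\abs^{\tilde i_1}\times\cdots\times\trho\abs^{\tilde i_k}$ is (up to reordering into Langlands position, which does not change the Whittaker model) the standard module $\Sc_\psi(\tfm)$, so its integral structure of $\zl$-valued Whittaker functions reduces onto something containing $\overline{\Wc(\rho\abs^{i_1}\times\cdots\times\rho\abs^{i_k},\psi)}=\Sc_{gen,\psi}(\fm)$ — because reduction mod $\ell$ commutes with parabolic induction and with the formation of Whittaker models on the lattice level, giving a surjection from the reduction of the $\ql$-Whittaker model onto the $\fl$-Whittaker model, hence $\Sc_{gen,\psi}(\fm)\subseteq\overline{\Sc_\psi(\tfm)}$. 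Taking the intersection over all lifts $\tfm$ yields $\Sc_{gen,\psi}(\fm)\subseteq\bigcap_{\tfm}\overline{\Sc_\psi(\tfm)}=\Sc_\psi(\fm)$.

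The main obstacle I expect is the compatibility of reduction mod $\ell$ with the Whittaker functor at the level of the specific integral structure — i.e.\ that the lattice of $\zl$-valued Whittaker functions in $\Wc(\tpsi)$ reduces onto the $\fl$-Whittaker model and that this reduction intertwines with parabolic induction. This requires care because the Whittaker model is a quotient (or image) rather than a subrepresentation, so one has to track how the integral structure behaves under the intertwining map realizing the Whittaker model, using that the Jacquet integral / Whittaker functional is defined over $\zl$ and that $\rl$ is exact; controlling that no unwanted collapsing happens when passing to $\fl$ is the delicate point, and is presumably where the aperiodicity hypothesis $\fm\in\Msi^{ap}$ and the results on $\Sc_{gen,\psi}$ from the previous theorem (in particular that $\langle\fm\rangle$ occurs with multiplicity one) get used.
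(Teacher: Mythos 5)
Your first part (the multiplicativity) is essentially the paper's own argument: aperiodicity of $\fm*\fm'$ because the image of the word monoid is exactly the set of aperiodic multisegments, well-definedness from the preceding theorem, compatibility of Whittaker models with $\times$, and reduction of the several-lines case to a single cuspidal line (the paper does that last step in \Cref{L:sccdiffsupseg} via genericity and irreducibility of the product of the socles across distinct lines; your ``the corresponding induced representations also commute'' is the same point, asserted rather than proved, but at the same level of detail as the paper's ``and hence'').

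The genuine gap is in the other two assertions: both hinge on identifying, over $\ql$, the Whittaker model of the cuspidal induction $\trho\abs^{\tilde i_1}\times\ldots\times\trho\abs^{\tilde i_k}$ attached to a word representing $\tfm$ with the standard module $\Sc_\psi(\tfm)$, and your justification of this is incorrect as stated. $\Sc_\psi(\tfm)$ is the Whittaker model of the product of the segment representations $\langle\De_i\rangle$ in arranged order; the cuspidal induction is a strictly larger representation, so ``both sides are the same induced representation'' is false, and the quoted quiver results of Reineke--Bongartz say nothing about parabolic induction. Likewise ``reordering into Langlands position does not change the Whittaker model'' fails: swapping \emph{linked} cuspidal factors does change the Whittaker model (only the inclusion $\Wc(\rho\times\rho\abs)\subseteq\Wc(\rho\abs\times\rho)$ of \Cref{L:serrep} holds); the fact that two words representing the same multisegment give the same Whittaker model is exactly the nontrivial well-definedness statement, not a triviality about reordering. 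What is actually needed is the paper's \Cref{L:genextql}: an induction peeling off one cuspidal at a time, using the surjection $\rho\abs^{a}\times\langle[a+1,b]_\rho\rangle\sra\langle[a,b]_\rho\rangle$, uniqueness of the Whittaker functional, and commutation only past \emph{unlinked} factors. Without it the characteristic-zero equality is unproven, and your mod-$\ell$ argument also breaks at the point where you need $\Wc(\trho\abs^{\tilde i_1}\times\ldots\times\trho\abs^{\tilde i_k},\psi)=\Sc_\psi(\tfm)$ for the chosen lift before reducing. Finally, the claimed ``surjection from the reduction of the $\ql$-Whittaker model onto the $\fl$-Whittaker model'' is not what is available; the paper instead uses \Cref{L:instrucwhit} to produce a nonzero map $\rho_1\times\ldots\times\rho_k\ra\overline{\Sc_\psi(\tfm)}$ and concludes $\Sc_{gen,\psi}(\fm)\subseteq\overline{\Sc_\psi(\tfm)}$ by uniqueness of the mod-$\ell$ Whittaker model (as in \Cref{C:sminc}); you correctly flag this compatibility as the delicate point but leave it unresolved, and the multiplicity-one input you guess at is not what is used there.
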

Although the following seems at the moment out of reach, examples of small rank seem to suggest the equality $\Sc_\psi=\Sc_{gen,\psi}$ on $\Msi^{ap}$ as well as the equality $\Sc_\psi(\fm)=\overline{\Sc_\psi(\tfm)}$ for any lift $\tfm$ of $\fm$.
\subsection*{Acknowledgments}
I would like to express my sincere gratitude to Nadir Matringe for bringing the central problem of this paper to my attention and for kindly inviting me to the NYU Shanghai in March 2024. I am also deeply grateful to Alberto Minguez for taking the time to read earlier versions of this paper and for providing valuable feedback. Their insights and support have been greatly appreciated.
This work has been supported by the projects PAT4628923 and PAT4832423 of the Austrian Science Fund (FWF).
\section{Notations}\label{S:notations}
\counterwithin{theorem}{subsection}
Let $\Ff$ be a non-archimedean local field with rings of integers $\of$ and residue field $\mathrm{k}$, whose cardinality we denote by $q$. We also choose a uniformizer $\varpi_\Ff$ of the maximal ideal of $\of$ and fix a prime $\ell$ not dividing $q$. We let $\abs$ be the absolute value of $\Ff$ such that $\lvert\varpi_\Ff\lvert=q^{-1}$.
From now on $R$ is one of the algebraically closed fields $\fl$ or $\ql$. We let $\Lambda$ be the maximal ideal of $\zl\subseteq \ql$ and fix a square root $q^{\frac{1}{2}}$ of $q$ in $\zl$. By abuse of notation, we are also going to denote its image in $\fl$ by $q^{\frac{1}{2}}$.

For $n\in \ZZ_{\ge 0}$ we let $G_n\coloneq \gl_n(F)$ and denote by $1_n$ the identity in $G_n$ and by \[w_n=\begin{pmatrix}
    &&1\\&\udots&\\1&&
\end{pmatrix}\] the anti-diagonal. Moreover, for $m,n\in \NN$, we denote by $w_{n,m}$ the block-diagonal embedding of $(1_m,w_n)$ into $G_{n+m}$ and the space $M_{n,m}$ of $n\times m$ matrices with entries in $\Ff$.

Inside $G_n$ we consider for $k\in \ZZ$ the closed subsets
\[G_n^k\coloneq\{g\in G_n:\, \mathrm{val}_\Ff(\det(g))=k\},\]
where $\mathrm{val}_\Ff$ denotes the valuation of $\Ff$ with $\mathrm{val}_\Ff(\varpi)=1$.
We let $B_n$ be the Borel subgroup of $G_n$ of upper diagonal matrices and $N_n$ its unipotent subgroup. 
More generally, if $\alpha=(\alpha_1,\ldots,\alpha_k)$ is a partition of $n$, we denote by $P_\alpha$ the parabolic subgroup containing $B_n$ with Levi-component $G_\alpha\coloneq G_{\alpha_1}\times\ldots\times G_{\alpha_k}$. Its opposite parabolic subgroup, $\overline{P_\alpha}$ is conjugated to $P_{(\alpha_k,\ldots,\alpha_1)}$.
Moreover, we denote by
\[ H_{n,m}\coloneq\left\{\begin{pmatrix}
    g&x\\0&n
\end{pmatrix}:\, g\in G_n,\,x\in M_{n,m},\, n\in N_m\right\}\subseteq G_{n+m}\] and let $P_n\coloneq H_{n-1,1}$ be the mirabolic subgroup of $G_n$.
Let $C_c^\infty(\Ff^n)$ be the space of $R$-valued locally constant and compactly supported functions on $\Ff^n$ and we set $\epsilon_n\coloneq(0,\ldots,0,1)\in \Ff^n$.

\subsection{ Representations}
In the next sections we recall the usual setup for $\ell$-adic and $\ell$-modular representation theory, \emph{c.f.} \cite{VigBook}.
Let $G\subseteq G_n$ be a closed subgroup. We denote by $\Rep(G,R)$ the category of smooth, admissible representations of finite length of $G$. Whenever possible we will omit the field of coefficients $R$. Let $\alpha=(\alpha_1,\ldots,\alpha_k)$ a partition of $n$.
We let $\Rep_\alpha=\Rep(G_\alpha)$, $\Irr_\alpha$ be the set of isomorphism classes of irreducible representations in $\Rep_\alpha$ and set \[\Rep\coloneq\bigcup_{n\in \NN}\Rep_n,\, \Irr\coloneq\bigcup_{n\in \NN}\Irr_n.\]
If $G\subseteq H\subseteq G_n$ are closed subgroups, we denote the functors of normalized induction by $\ind_H^G$ and its compactly supported version by $\Ind_H^G$. 
We recall the normalized Jacquet functor and parabolic induction corresponding to parabolic subgroups $P=MN$ of $G_\alpha$, which give rise to the exact functors
\[r_P \colon \Rep_\alpha\ra \Rep(M),\, \Ind_{P}^{G_\alpha}\colon \Rep(M)\ra \Rep_\alpha.\]
We write $r_\alpha\coloneq r_{P_\alpha}$ and $\ol{r_\alpha}\coloneq r_{\ol{P_\alpha}}$. 
Recall that by Frobenius reciprocity respectively Bernstein reciprocity $r_{\alpha}$ respectively $\ol{r_\alpha}$ is the left adjoint  respectively right adjoint of $\Ind_{P_\alpha}^{G_n}$. By abuse of notation we will also notate the maps they induce between the respective Grothendieck groups by the same letters.
As is convention, we will write
\[\pi_1\times\ldots\times\pi_k\coloneq \Ind_{P_\alpha}^{G_n}(\pi_1\otimes\ldots\otimes\pi_k).\]
If $\chi$ is a smooth character of $\Ff$ and $\pi\in \Rep$, we denote by $\chi\pi=\chi(\det)\pi$. If $\pi\in \Rep_n$, we denote the corresponding element in the Grothendieck of $\Rep_n$ by $[\pi]$ and denote by the length of $\pi$ the number of its irreducible subquotients counted with multiplicity. If $\pi$ is a representation of $G_n$, we denote by $\pi^\mathfrak{c}$ the representation obtained by twisting $\pi$ by $g\mapsto w_n(g^{-1})^tw_n$. We recall that if $\pi$ is irreducible and $\ell>2$, then $\pi^\mathfrak{c}\cong \pi^\lor$, see for example \cite{VigBook} or \cite[Remark 2.7]{MinSec}, and \[{(\pi_1\times\pi_2)}^\mathfrak{c}={\pi_2}^\mathfrak{c}\times {\pi_1}^\mathfrak{c}.\] Finally, we recall that parabolic induction on $G_n$ is commutative on the level of Grothendieck groups, in particular for $\pi,\pi'\in \Irr$ such that $\pi\times\pi'$ is irreducible, $\pi\times\pi'\cong\pi'\times \pi$, see \cite[1.16]{VigBook} for $\ell>2$, \cite[Theorem 1.9]{ZelII} for the case $R=\mathbb{C}$ and \cite[Proposition 2.6]{MinSec} in general.
\subsubsection{ Cuspidal representations}
A representation $\rho\in\Irr_n$ is called cuspidal if for all non-trivial partitions $\alpha$ of $n$, $r_\alpha(\rho)=0$. It is called supercuspidal if there exists a non-trivial partition $\alpha$ and $\pi\in \Irr_\alpha$ such that $\rho$ appears as a subquotient of $\Ind_{P_\alpha}^{G_n}\pi$. In general, supercuspidal implies cuspidal and if $R=\ql$, cuspidal implies supercuspidal. We denote the subset of $\Irr_n$ consisting of cuspidal respectively supercuspidal representations by $\cusp_n$ respectively $\Scu_n$ and define \[\cusp\coloneq\bigcup_{n\in \NN}\cusp_n,\, \Scu\coloneq\bigcup_{n\in \NN}\Scu_n.\] 
We also recall the notation of cuspidal support, \emph{i.e.} if $\pi\in \Irr$ there exist up to possible permutation and isomorphism unique $\rho_1,\ldots,\rho_k\in \cusp$ such that $\pi\hra \rho_1\times\ldots\times\rho_k$. We denote then by $\cus(\pi)\coloneq [\rho_1]+\ldots+[\rho_k]$ the cuspidal support of $\pi$. Weakening in the above definition the condition of being a subrepresentation to being a subquotient and cuspidal representations to supercuspidal representations, gives rise to the supercuspidal support $\scus(\pi)$.
If at any point the field of coefficients will be important, we will add in brackets to the respective category or set, e.g. $\Irr(\fl)$ versus $\Irr(\ql)$. 

We recall the following notions, see for example \cite[§3.4, §4.5]{MinSecSte}.
Let $\rho\in \cusp$ and recall that $\rho\times\chi\rho$ is reducible if and only if $\chi\cong \abs^{\pm}$.
We recall the cuspidal line \[\ZZ[\rho]\coloneq \{[\rho \abs^k]:\, k\in \ZZ\}\] and denote the cardinality of $\ZZ[\rho]$ by $o(\rho)$. Note that $o(\rho)$ is finite if and only if $R=\fl$. Set \[e(\rho)\coloneq \begin{cases}
    o(\rho)&\text{ if }o(\rho)>1,\\
    \ell&\text{ otherwise.}
\end{cases}\]
Moreover, one can associate to $\rho$ an integer $f(\rho)$ via type theory.
If $o(\rho)>1$, $o(\rho)$ is the order of $q^{f(\rho)}$ in $R$.
We let $\scu_n$ be the subset of $\cusp_n$ consisting of $\rho$ such that $o(\rho)>1$, \emph{i.e.} those $\rho$ which satisfy that $\rho\times\rho$ is irreducible.
Finally, we say $\rho$ and $\rho'$ are in different cuspidal lines if $[\rho']\notin\ZZ[\rho]$
and set $\scu\coloneq \bigcup_{n\in \NN}\scu_n$.
\subsection{ Multisegments}
We now recall the combinatorics of multisegment, \emph{cf. }\cite{ZelII}, \cite{MinSec}.
Let $\rho\in \cusp_m$ and $a\le b\in \ZZ$. A segment is a sequence
\[[a,b]_\rho=([\rho\abs^a],\ldots,[\rho\abs^b])\] and two segments $[a,b]_\rho$ and $[a',b']_{\rho'}$ are equal if and only if $\rho'\abs^a\cong \rho\abs^a$ and $b-a=b'-a'$. We also let $[a,b]_\rho^\lor=[-b,-a]_{\rho^\lor}$.
Define \[{}^-[a,b]_\rho=[a+1,b]_\rho,\,[a,b]_\rho^{-}=[a,b-1]_\rho,\,{}^+[a,b]_\rho=[a-1,b]_\rho,\,[a,b]_\rho^{+}=[a,b+1]_\rho,\]\[ a_\rho([a,b]_\rho)=a\in \begin{cases}
    \ZZ/(o(\rho)\ZZ)&o(\rho)<\infty,\\\ZZ&o(\rho)=\infty
\end{cases},\, b_\rho([a,b]_\rho)=b\in \begin{cases}
    \ZZ/(o(\rho)\ZZ)&o(\rho)<\infty,\\\ZZ&o(\rho)=\infty
\end{cases}.\]
We define length and degree of a segment as $l([a,b]_\rho)\coloneq b-a+1\in \ZZ$ and $\deg([a,b]_\rho)=(b-a+1)m\in \ZZ$. The cuspidal support of $[a,b]_\rho$ is defined as $\mathrm{cusp}([a,b]_\rho)\coloneq[\rho\abs^a]+\ldots+[\rho\abs]^b$. By abuse of notation we will often write $\rho$ for the segment $[0,0]_\rho$.

A multisegment is a formal finite sum and we extend the length $l$, the degree $\deg$, the notion of cuspidal support, and $(-)^\lor$ linearly. We let $\Ms$ be the set of multisegments and $\Ms(\rho)$ be the set of multisegments consisting only of segments of the form $[a,b]_\rho$. A multisegment is called aperiodic if it does not contain a sub-multisegment of the form
\[[a,b]_\rho+[a+1,b+1]_\rho+\ldots+[a+e(\rho)-1,b+e(\rho)-1]_\rho.\]
For any subset $\mathcal{N}\subseteq \Ms$, we denote by $\mathcal{N}^{ap}$ the aperiodic multisegments in $\mathcal{N}.$ Moreover, for any $\fm\in \Ms$, we decompose $\fm=\fm_b+\fm_{nb}$ with $\fm_b$ consisting of all segments $[a,b]_\rho$ in $\fm$ with $\rho\in \scu$. We let $\Msi$ be the set of multisegments such that $\fm=\fm_b$.
\begin{theorem}[{\cite[§9]{MinSec}, \cite[§6]{ZelII}}]\label{T:msZ}
There exists a surjective map
\[\Z\colon \Ms\ra \Irr\]
satisfying the following.
\begin{enumerate}
    \item $\Z(\fm)\in \Irr_{\deg(\fm)}$.
    \item $\Z$ restricted to aperiodic multisegments is a bijection.
    \item $\Z(\fm)^\lor\cong \Z(\fm^\lor)$.
    \item For $\fm\in \Ms^{ap}$, $\cus(\Z(\fm))=\cus(\fm)$. If $\fm\in\Ms(\rho)^{ap}$ with $\rho\in\Scu$, then $\scus(\Z(\fm))=\cus(\Z(\fm))$. 
    \item If $\fm=\fm_1+\fm_2$ then $\Z(\fm)$ is a subquotient of $\Z(\fm_1)\times \Z(\fm_2)$ and appears with multiplicity one in its Jordan Hölder decomposition.
    \item $\Z([a,b]_\rho)$ is a subrepresentation of $\rho\abs^a\times\ldots \rho\abs^b$.
    \item If $\rho_1,\ldots,\rho_l\in \cusp$ lie in pairwise different cuspidal lines and $\fm=\fm_1+\ldots+\fm_k$ with $\fm_i\in \Ms(\rho_i)$,
    \[\Z(\fm)\cong \Z(\fm_1)\times\ldots\times \Z(\fm_k).\]
\end{enumerate}
\end{theorem}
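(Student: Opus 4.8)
The plan is to reconstruct the construction of Zelevinsky \cite[§6]{ZelII} in the complex case and of Mínguez--Sécherre \cite[§9]{MinSec} in general: one builds $\Z$ by hand as a socle of a product of segment representations, deduces properties (1) and (3)--(7) by formal manipulations, and isolates property (2) as the one genuinely arithmetic input. First I would fix the map on a single segment $\Delta=[a,b]_\rho$: the product $\rho\abs^a\times\rho\abs^{a+1}\times\cdots\times\rho\abs^b$, with the factors in increasing order, has a unique irreducible subrepresentation (this is part of the analysis in \cite{MinSec}, proved via Bernstein--Zelevinsky derivatives; over $\ql$ it is classical), and I set $\Z(\Delta)$ to be it. Then for $\fm=\Delta_1+\cdots+\Delta_k$ I would choose a \emph{standard order}, one in which $\Delta_i$ does not precede $\Delta_j$ for $i<j$, put $\lambda(\fm)\coloneq\Z(\Delta_1)\times\cdots\times\Z(\Delta_k)$, and define $\Z(\fm)\coloneq\soc(\lambda(\fm))$. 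The two facts to establish here are: (a) if $\Delta$ precedes $\Delta'$ then $\Z(\Delta)\times\Z(\Delta')$ has length two with simple socle, and (b) an exchange argument deducing from (a) that $\lambda(\fm)$ has simple socle independent of the chosen standard order. Property (6) is then true by construction and property (1) is immediate since $\deg$ is additive under $\times$; property (4) holds (for every $\fm$, a fortiori for aperiodic ones) because by exactness of parabolic induction $\lambda(\fm)$ embeds into a product of cuspidal twists realizing $\cus(\fm)$, so $\cus(\Z(\fm))=\cus(\fm)$ after comparing degrees.

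Next I would dispatch the remaining formal properties. For (3): the contragredient sends $\soc$ to $\cosoc$ and $\Z(\Delta_1)\times\cdots\times\Z(\Delta_k)$ to $\Z(\Delta_k^\lor)\times\cdots\times\Z(\Delta_1^\lor)$; reversing a standard order and dualizing each segment again produces a standard order of $\fm^\lor$ (because $(-)^\lor$ reverses the precedence relation), so one is left matching the unique irreducible quotient of one product with the unique irreducible submodule of the other, both being $\Z(\fm^\lor)$, whence $\Z(\fm)^\lor\cong\Z(\fm^\lor)$; the base case $\Z(\Delta)^\lor\cong\Z(\Delta^\lor)$ is immediate from the submodule/quotient characterization. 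For (7): if the $\rho_i$ lie in pairwise distinct cuspidal lines then no segment of $\fm_i$ is linked to one of $\fm_j$ for $i\neq j$, and a standard argument using this together with the irreducibility of $\rho\times\rho'$ for cuspidals in distinct lines (recalled in the text) shows $\Z(\fm_1)\times\cdots\times\Z(\fm_k)$ is irreducible, hence equals its socle $\Z(\fm)$. For (5): the basic unitriangularity statement --- that $\Z(\fm)$ occurs in $\lambda(\fm)$ with multiplicity one and every other constituent of $\lambda(\fm)$ is $\Z(\fn)$ with $\fn$ strictly $\lp$-below $\fm$ --- together with the compatibility of $\lp$ with addition of multisegments and the commutativity of parabolic induction in Grothendieck groups, lets one peel segments off $\Z(\fm_1)\times\Z(\fm_2)$ one at a time and check that every successive quotient produced has only constituents strictly $\lp$-below $\fm$; this pins the multiplicity of $\Z(\fm)$ in $\Z(\fm_1)\times\Z(\fm_2)$ to exactly one, as in \cite[§7]{ZelII}.

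The main obstacle is property (2), and it is the only place where the residue characteristic $\ell$ genuinely intervenes. I would first reduce to a single cuspidal line: by property (7) the map $\Z$ is compatible with the decomposition of irreducibles, and of aperiodic multisegments, along distinct cuspidal lines given by the cuspidal support, so it suffices to treat $\fm\in\Ms(\rho)^{ap}$ for a fixed $\rho$. Within a fixed line I would run the mod-$\ell$ Bernstein--Zelevinsky derivative machinery of Vignéras \cite{VigInduced}: for surjectivity, given an irreducible $\pi$ with cuspidal support contained in $\ZZ[\rho]$, its highest nonzero $\rho\abs^b$-derivative (for $b$ maximal) peels off a well-defined segment $\Delta$ and realizes $\pi$ as the socle of $\Z(\Delta)\times\pi'$ with $\deg(\pi')<\deg(\pi)$, so $\pi=\Z(\Delta+\fn)$ by induction, and one must verify that the combinatorics of the derivative forces $\Delta+\fn$ to be aperiodic --- this is exactly the step where the invariant $e(\rho)$ and the possibility $o(\rho)=1$, i.e.\ cuspidal representations that are not supercuspidal, must be controlled. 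For injectivity one shows the same algorithm reconstructs $\fm$ from $\Z(\fm)$ whenever $\fm$ is aperiodic, aperiodicity being precisely the condition under which the successive rightmost-endpoint data extracted from the derivatives reassemble unambiguously into $\fm$. An equivalent packaging: the standard modules $\{[\lambda(\fm)]:\fm\in\Ms(\rho)^{ap}\}$ are unitriangular against the $\Z(\fm)$ with respect to $\lp$ and hence span the same lattice, and combining this with Vignéras's count of irreducible representations of fixed cuspidal support in terms of aperiodic multisegments forces $\Z$ to be simultaneously injective and surjective on $\Ms(\rho)^{ap}$. Either way, the non-formal content is this mod-$\ell$ derivative and counting analysis, which I would import from \cite{VigInduced}, \cite{MinSec} and \cite{MinSecIV}; the socle construction then makes properties (1) and (3)--(7) automatic.
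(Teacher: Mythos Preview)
The paper does not give its own proof of this theorem: it is stated with a citation to \cite[§9]{MinSec} and \cite[§6]{ZelII} and no argument follows. So there is nothing in the paper to compare your proposal against; you are already doing more than the paper does.

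Your outline is essentially Zelevinsky's proof and is correct over $\ql$, but it has a genuine gap over $\fl$. You build $\Z(\fm)$ as the socle of $\Z(\De_1)\times\cdots\times\Z(\De_k)$ after putting the segments in a \emph{standard order} in which no $\De_i$ precedes a later $\De_j$. Over $\ql$ (where $o(\rho)=\infty$) such an order always exists --- this is exactly the ``arranged form'' the paper recalls --- but over $\fl$ the cuspidal line $\ZZ[\rho]$ has finite size $o(\rho)$ and precedence can be cyclic: for $o(\rho)=3$ each of $[0,0]_\rho,[1,1]_\rho,[2,2]_\rho$ precedes the next and the last precedes the first (since $[0,0]_\rho=[3,3]_\rho$), so no arranged order exists. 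The paper is explicit about this, introducing arranged forms only after the clause ``Assume now for a moment that $R=\ql$''. M\'inguez--S\'echerre's construction in \cite{MinSec} therefore cannot and does not proceed by first choosing such an order; their definition of $\Z(\fm)$ and their proofs of (1)--(7) are organised around an intrinsic multiplicity-one characterisation that makes sense on a cycle. Your exchange argument (a)--(b) and the segment-peeling strategy for (5) likewise presuppose a linear order that is simply unavailable in the modular setting.

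A related symptom: your claim that (4) holds for \emph{every} $\fm$, not just aperiodic ones, is false over $\fl$. The paper itself exploits this failure in the proof of \Cref{L:elemopap}, citing \cite[Proposition 9.34]{MinSec} for $\cus(\fm_2)\neq\cus(\langle\fm_2\rangle)$ when $\fm_2$ is not aperiodic. The restriction to aperiodic multisegments in (4) is essential, and its necessity is another manifestation of the cyclic phenomena your construction has not yet absorbed.
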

If $\fm$ consists only of segments of length $1$, we call $\Z(\fm)$ non-degenerate.
Finally, we recall the Aubert-Zelevinsky involution, \emph{c.f.} \cite{Aub}, \cite{ZelII}, \cite{MinSecIV}, $(-)^*\colon \Irr\ra \Irr$ and denote \[\langle-\rangle\coloneq (\Z)^*\colon \Ms^{ap}\ra\Irr.\]
This map preserves the cuspidal support
and if $\fm,\fm_1,\ldots,\fm_k\in \Ms^{ap}$, the multiplicity of $\langle\fm\rangle$ in $\langle\fm_1\rangle\times\ldots\times\langle\fm_k\rangle$ is the multiplicity of $\Z(\fm)$ in $\Z(\fm_1)\times\ldots\Z(\fm_k)$.
\begin{lemma}\label{L:irred}
Let $\rho,\rho'\in \scu$ be in different cuspidal lines. Moreover, let $\pi$ be an irreducible subquotient of $\rho_1\times\ldots\times\rho_k,\,\rho_i\in\ZZ[\rho]$ and $\pi'$ be an irreducible subquotient of $\rho_1'\times\ldots\times\rho_l',\,\rho_i'\in\ZZ[\rho]$. Then $\pi\times \pi'$ is irreducible
\end{lemma}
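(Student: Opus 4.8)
The plan is to reduce the claim to a statement purely about cuspidal supports in different lines and then invoke the standard machinery for parabolic induction on $\gl_n$. First I would recall that by \Cref{T:msZ}(7) and the compatibility of $\langle-\rangle$ with products, the assertion is equivalent to showing that $\langle\fm\rangle\times\langle\fm'\rangle$ is irreducible whenever $\fm\in\Ms(\rho)^{ap}$, $\fm'\in\Ms(\rho')^{ap}$ with $[\rho],[\rho']$ in different cuspidal lines; indeed every irreducible subquotient $\pi$ (resp. $\pi'$) of a product of twists of $\rho$ (resp. $\rho'$) is of the form $\langle\fm\rangle$ (resp. $\langle\fm'\rangle$) for a suitable aperiodic $\fm\in\Ms(\rho)$ (resp. $\fm'\in\Ms(\rho')$), by the bijectivity in part (2) together with part (4). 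So it suffices to treat the case $\pi=\langle\fm\rangle$, $\pi'=\langle\fm'\rangle$.

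Next I would use the fact that irreducibility of $\pi\times\pi'$ can be detected on the level of Jacquet modules, via the geometric lemma of Bernstein--Zelevinsky: in the Grothendieck group, $r_\alpha(\pi\times\pi')$ decomposes into a sum of terms built from subquotients of Jacquet modules of $\pi$ and of $\pi'$ glued along partitions. Since the cuspidal support of $\pi$ is supported on $\ZZ[\rho]$ and that of $\pi'$ on $\ZZ[\rho']$, and these sets are disjoint, there is no "interaction" between the two factors: every cuspidal line appearing in $\cus(\pi)$ is inert with respect to the lines in $\cus(\pi')$. Concretely, $\rho_i\times\rho_j'$ is irreducible for all $i,j$ because $\rho_j'(\rho_i)^{-1}\not\cong\abs^{\pm 1}$ (the twist would have to lie in $\ZZ[\rho]\cap\ZZ[\rho']=\emptyset$), hence $\rho_i$ and $\rho_j'$ commute under $\times$. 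A clean way to package this is: the representation $\pi\times\pi'$ has a unique irreducible subrepresentation and a unique irreducible quotient (socle and cosocle), both equal to $\langle\fm+\fm'\rangle$ by \Cref{T:msZ}(5) together with the multiplicity-one statement; and one shows the length is $1$ by comparing with $\pi'\times\pi$ (commutativity in the Grothendieck group, \cite[1.16]{VigBook} / \cite[Proposition 2.6]{MinSec}) and using that the socle of $\pi\times\pi'$ injects, after applying $r_\alpha$ for the relevant $\alpha=(\deg\fm,\deg\fm')$, into $\pi\otimes\pi'$ with multiplicity one, forcing every irreducible subquotient to have cuspidal support $\cus(\fm)+\cus(\fm')$ and hence to be $\langle\fm+\fm'\rangle$.

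The main obstacle is the last multiplicity count: one must rule out that $\langle\fm+\fm'\rangle$ occurs with multiplicity $>1$, or that some other irreducible with the same cuspidal support occurs. I expect to handle this by the standard argument that $r_{(\deg\fm,\deg\fm')}(\pi\times\pi')$ contains $\pi\otimes\pi'$ exactly once (all other terms of the geometric lemma involve proper Jacquet modules of $\pi$ or $\pi'$, which still live on the lines $\ZZ[\rho]$, $\ZZ[\rho']$ respectively, so they cannot recombine to give back $\pi\otimes\pi'$ because that would require a segment of $\fm$ or $\fm'$ to be split nontrivially and re-glued — impossible since the two lines never meet), together with Frobenius and Bernstein reciprocity bounding $\dim\Ho(\pi\times\pi',\pi\times\pi')$ from above by $\dim\Ho(r_\alpha(\pi\times\pi'),\pi\otimes\pi')=1$. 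Since $\pi\times\pi'\neq 0$ and this Hom-space is at least $1$-dimensional, it is exactly $1$-dimensional, and a finite-length representation whose endomorphism ring is a field (here $R$, as $R$ is algebraically closed) and which has a unique irreducible quotient must be irreducible. This finishes the proof.
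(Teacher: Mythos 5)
Your opening reduction contains the essential gap. You assert that every irreducible subquotient $\pi$ of $\rho_1\times\ldots\times\rho_k$ with $\rho_i\in\ZZ[\rho]$ is of the form $\langle\fm\rangle$ with $\fm\in\Ms(\rho)^{ap}$, i.e.\ that $\cus(\pi)$ stays inside the line $\ZZ[\rho]$. Over $\fl$ this is false: subquotients of products of cuspidal representations along a line can have cuspidal support involving \emph{new} cuspidal (non-supercuspidal) representations of larger degree. For instance, if $\rho$ is supercuspidal with $o(\rho)=e>1$ (so $\rho\in\scu$), the product $\rho\times\rho\abs\times\ldots\times\rho\abs^{e-1}$ has a cuspidal non-supercuspidal subquotient, whose cuspidal support is a single new cuspidal line disjoint from $\ZZ[\rho]$. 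Consequently your key assertion that "the cuspidal support of $\pi$ is supported on $\ZZ[\rho]$ and that of $\pi'$ on $\ZZ[\rho']$, and these sets are disjoint" is exactly what must be proved, and it does not follow from $\ZZ[\rho]\cap\ZZ[\rho']=\emptyset$ alone: one needs to know which cuspidal lines can occur in the cuspidal support of such subquotients. This is precisely the point of the paper's proof, which invokes the classification of cuspidal representations in \cite[\S 6]{MinII} to conclude that the cuspidal lines occurring in $\cus(\pi)$ and $\cus(\pi')$ are still disjoint, and then simply applies the listed properties (essentially \Cref{T:msZ}(7) together with the remarks on $\langle-\rangle$). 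Your argument never touches this modular phenomenon, so the case where $\pi$ or $\pi'$ has non-banal, "periodic" support is not covered.

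Two further remarks on the second half. First, it amounts to re-proving the statement that products of irreducibles with cuspidal supports on pairwise disjoint lines are irreducible, which is available to cite (\Cref{T:msZ}(7)); re-deriving it via the geometric lemma is not wrong in spirit, but your multiplicity count is too loose: having cuspidal support $\cus(\fm)+\cus(\fm')$ does not force an irreducible subquotient to be $\langle\fm+\fm'\rangle$. Second, your closing criterion "finite length, endomorphism ring a field, unique irreducible quotient $\Rightarrow$ irreducible" is false (a non-split uniserial extension of two non-isomorphic irreducibles with $\Ho$ from cosocle to socle zero satisfies both hypotheses). The standard argument needs the unique irreducible subrepresentation and the unique irreducible quotient to coincide and to occur with multiplicity one, which makes it a direct summand, and then one uses the one-dimensionality of the endomorphism algebra to exclude a complement.
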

\begin{proof}
    By the classification of cuspidal representations in \cite[§6]{MinII}, the cuspidal lines of the cuspidal supports of $\pi$ and $\pi'$ do not intersect. Indeed, if $\sigma$ is a cuspidal representation appearing in the cuspidal support of $\pi$, its supercuspidal support consists of representations in $\ZZ[\rho]$, see \cite[Theorem 9.36(3)]{MinSec}. By the uniqueness of the cuspidal and supercuspidal support, it follows that the cuspidal representations appearing in $\pi$ and $\pi'$ lie in different cuspidal lines.
    The claim follows from the property (7) in \Cref{T:msZ}.
\end{proof}
Let $\fm\in \Ms$ and $\De=[a,b]_\rho,\, \De'=[a',b']_\rho$ be two segments in $\fm$ with \[a+1\le a'\le b+1\le b'.\] In this case we call $\De$ and $\De'$ linked. An \emph{elementary operation} on $\fm$ refers to changing the segment in the following way: 
\[\fm\mapsto \fm-\De-\De'+[a,b']_\rho+[a',b]_\rho.\]
We will say that the elementary operation is of the form $[a,b]_\rho+[a',b']_\rho\mapsto [a,b']_\rho+[a',b]_\rho$.
We write $\fn\lp \fm$ if $\fn$ can be obtained by repeated applications of elementary operations to $\fm$ and $\fn\lpp\fm$ if $\fn\lp\fm$ and $\fm\neq \fn$. The multisegment $\fm=\De_1+\ldots+\De_k$ is called unlinked, if for all $\ain{i\neq j}{1}{k}$ $\De_i$ and $\De_j$ are unlinked. 
\begin{lemma}[{\cite[Theorem 7.1]{ZelII}, \cite[Theorem 6.4.1]{DroMod}}]\label{L:ordersubsubquot}
    Let $\fm=\De_1+\ldots+\De_k\in \Ms$ and $\fn\in \Ms^{ap}$. Then $\langle\De_1\rangle\times\ldots\times\langle\De_k\rangle$ contains $\langle\fn\rangle$ as an irreducible subquotient if and only if $\fn\lp\fm$.
    In particular, $\langle\fm\rangle\cong \langle\De_1\rangle\times\ldots\times\langle\De_k\rangle$ if and only if $\fm$ is unlinked.
\end{lemma}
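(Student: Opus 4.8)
The plan is to transport the statement to the Zelevinsky side through the Aubert--Zelevinsky involution $(-)^*$, reduce to a single cuspidal line, and then prove the two implications separately: the ``if'' part by a soft induction over chains of elementary operations, and the ``only if'' part by a Jacquet-module computation. By the multiplicity comparison recorded just before the statement, the multiplicity of $\langle\fn\rangle$ in $\langle\De_1\rangle\times\cdots\times\langle\De_k\rangle$ equals the multiplicity of $\Z(\fn)$ in $\Z(\De_1)\times\cdots\times\Z(\De_k)$; since $(-)^*$ is a bijection on $\Irr$, it suffices to show that, for $\fn\in\Ms^{ap}$, $\Z(\fn)$ occurs as a subquotient of $\Z(\De_1)\times\cdots\times\Z(\De_k)$ if and only if $\fn\lp\fm$. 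Moreover, linked segments always lie in a common cuspidal line, so $\lp$ respects the decomposition of a multisegment along cuspidal lines, while on the representation side the matching decomposition is controlled by \Cref{T:msZ}(7) and \Cref{L:irred}; hence I may assume all $\De_i$ lie in $\Ms(\rho)$ for a fixed $\rho\in\cusp$, i.e. I work entirely inside the degeneration order on $\Ms(\rho)$.

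For the implication $\fn\lp\fm\Rightarrow\Z(\fn)$ occurs, I would induct on the length of a chain of elementary operations from $\fm$ down to $\fn$, the base case $\fn=\fm$ being \Cref{T:msZ}(5) iterated. In the inductive step a single elementary operation replaces a linked pair $E=[a,b]_\rho$, $E'=[a',b']_\rho$ (with $a+1\le a'\le b+1\le b'$) in the current multisegment by $E\cup E'=[a,b']_\rho$ and $E\cap E'=[a',b]_\rho$; the basic decomposition of $\Z(E)\times\Z(E')$ into Jordan--Hölder constituents (valid over $R$ by \cite{MinSec}) gives that $\Z((E\cup E')+(E\cap E'))$ occurs in $\Z(E)\times\Z(E')$, and combining this with \Cref{T:msZ}(5) and the exactness and commutativity of parabolic induction on Grothendieck groups propagates the statement down the chain.

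The substantial part, and what I expect to be the main obstacle, is the converse: every irreducible subquotient $\Z(\fn)$ of $\Z(\De_1)\times\cdots\times\Z(\De_k)$ satisfies $\fn\lp\fm$. Here the plan is an induction on $\deg(\fm)$. One fixes a cuspidal of maximal twist $\rho\abs^{c}$ occurring in $\cus(\fm)=\cus(\Z(\fn))$ (using \Cref{T:msZ}(4)) and applies a carefully chosen Jacquet functor $r_\alpha$ (equivalently, a suitable Bernstein--Zelevinsky derivative). By the geometric lemma, $r_\alpha$ applied to $\Z(\De_1)\times\cdots\times\Z(\De_k)$ is computed from $r_\alpha$ applied to the individual $\Z(\De_i)$, which merely shortens those $\De_i$ ending in $\rho\abs^{c}$; comparing with $r_\alpha$ applied to $\Z(\fn)$, which shortens one maximal segment of $\fn$, one extracts a strictly smaller instance $\fn^{-}$ occurring in the shortened product, so $\fn^{-}\lp\fm^{-}$ by induction, after which a combinatorial argument about the degeneration order on $\Ms(\rho)$ promotes this to $\fn\lp\fm$. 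The delicate point, where the modular setting genuinely departs from \cite[Theorem 7.1]{ZelII}, is that over $\fl$ these Jacquet modules and derivatives no longer enjoy the multiplicity-free behaviour of the complex case, so the bookkeeping must be carried out with care; this is precisely the content of \cite[Theorem 6.4.1]{DroMod}, which I would invoke rather than reprove.

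Finally, the ``in particular'' follows formally. The multisegment $\fm$ is unlinked precisely when no elementary operation applies to it, i.e. when it is minimal for $\lp$, and then $\{\fn:\fn\lp\fm\}=\{\fm\}$; by the equivalence just proved, together with the surjectivity of $\Z$, the product $\Z(\De_1)\times\cdots\times\Z(\De_k)$ has all its Jordan--Hölder constituents isomorphic to $\Z(\fm)$, which by \Cref{T:msZ}(5) appears with multiplicity one, so the product equals the irreducible $\Z(\fm)$. Transporting back through $(-)^*$, which preserves length by the multiplicity comparison and sends irreducibles to irreducibles, $\langle\De_1\rangle\times\cdots\times\langle\De_k\rangle$ is then irreducible, and since $\langle\fm\rangle$ occurs in it (by \Cref{T:msZ}(5) and the multiplicity comparison) it must equal $\langle\fm\rangle$; note that $\langle\fm\rangle$ makes sense here because an unlinked $\fm$ is aperiodic, consecutive segments of a periodic chain being linked. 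Conversely, if $\fm$ is linked, then a single elementary operation together with the first implication places in $\Z(\De_1)\times\cdots\times\Z(\De_k)$ a Jordan--Hölder constituent $\Z(\fn)$, with $\fn\lpp\fm$, that is not isomorphic to $\Z(\fm)$, so this product, hence also $\langle\De_1\rangle\times\cdots\times\langle\De_k\rangle$, is reducible.
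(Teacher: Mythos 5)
The paper offers no proof of this lemma beyond the citations to \cite[Theorem 7.1]{ZelII} and \cite[Theorem 6.4.1]{DroMod}, and your proposal ultimately rests on the same source: the only genuinely hard step (the ``only if'' direction over $\fl$) is explicitly deferred to \cite[Theorem 6.4.1]{DroMod}, which is exactly the result the paper quotes. Your surrounding reductions (transfer from $\langle-\rangle$ to $\Z$ via the multiplicity comparison, reduction to a single cuspidal line, the chain argument for ``if'', and the formal deduction of the ``in particular'') are consistent with how that cited result yields the stated form, so the proposal takes essentially the same approach as the paper.
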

\begin{proof}
    The case in $R=\ql$ was treated in \cite[Theorem 7.1]{ZelII} and the case $R=\fl$ in \cite[Theorem 6.4.1]{DroMod}. Note that in \cite{DroMod}, the claim was proven for $\Z(-)$ instead of $\langle-\rangle$, however applying the Aubert-Zelevinsky yields the equivalence. 
    
    To see the claim regarding the irreducibilty, note that the induced representation is irreducible if and only if it contains no irreducible subquotient $\langle\fn'\rangle$ for $\fm\neq \fn'\in \Ms$. Note moreover, that if $\fn'$ is not aperiodic, it is easy to construct $\fn''\lpp \fn'$ via one elementary operation, and hence $\fn''\lpp\fn'\lp\fm$. Repeating this process, we arrive at $\fn\lpp\fn'\lp\fm$ with $\fn$ aperiodic, and hence by the above lemma $\langle\fn\rangle$ is a subquotient of the induced representation.
    Thus the induced representation is irreducible if and only if it contains no $\langle \fn\rangle$ with $\fn\in \Ms^{ap}$, which by the above can only happen if and only if $\fn\lp\fm$. 
\end{proof}
Assume now for a moment that $R=\ql$ and let $\De,\De'$ be two segments. We say $\De$ preceds $\De'$ if $\De=[a,b]_\rho,\, \De'=[a',b']_\rho$ and \[a+1\le a'\le b+1\le b'.\]
Let $\fm=\De_1+\ldots+\De_k\in \Msq$. We say $(\De_1,\ldots,\De_k)$ is in \emph{arranged form} if for all $\ain{i,j}{1}{k},\, i<j$ $\De_i$ does not precede $\De_j$. Any multisegment over $\ql$ admits an arranged form and any two arranged forms can be obtained from each other by repeatedly changing the order of two neighboring unlinked segments, \emph{i.e.} replacing
$(\ldots,\De_i,\De_{i+1},\ldots)\mapsto (\ldots,\De_{i+1},\De_i,\ldots)$ if $\De_i$ and $\De_{i+1}$ are unlinked.
\subsection{ Integral structures}\label{S:intstruc}
We recall the following results of \cite[I.9]{VigBook} on integral structures.
Let $\alpha$ be a partition of $n\in \NN$ and $(\widetilde{\pi},V)\in \Rep_\alpha(\ql)$. We recall that an integral structure of $\widetilde{\pi}$ is a $\zl$-lattice $\mathfrak{l}$ in $V$ which generates $V$, \emph{i.e.} the natural map induces an isomorphism $\mathfrak{l}\otimes_\zl\ql\cong V$. If $\widetilde{\pi}$ admits an integral structure it is called \emph{integral}.
For $\widetilde{\pi}\in \Rep_\alpha$ integral and $\mathfrak{l}$ an integral structure, $\mathfrak{l}\otimes_\zl\fl$ is an object in $\Rep_\alpha(\fl)$. We denote by
\[\rl(\widetilde{\pi})\coloneq [\mathfrak{l}\otimes_\zl\fl],\] which is an element in the Grothendieck group of $\Rep_\alpha(\fl)$. It is $\rl(\widetilde{\pi})$ which is independent of the chosen integral structure; the representation $\mathfrak{l}\otimes_\zl\fl$ is highly dependent on the specific $\mathfrak{l}$ and in general not irreducible.
Moreover, the representation $\Ind_{P_\alpha}^{G_n}\widetilde{\pi}$ is integral, with the natural integral structure $\Ind_{P_\alpha}^{G_n}\mathfrak{l}$, \emph{i.e.} the $\mathfrak{l}$-valued functions.
Then
\[(\Ind_{P_\alpha}^{G_n}\mathfrak{l})\otimes_\zl\fl\cong \Ind_{P_\alpha}^{G_n}(\mathfrak{l}\otimes_\zl\fl).\]
Moreover, if $\widetilde{\tau}$ is a subrepresentation respectively quotient of $\widetilde{\pi}$, $\mathfrak{l}\cap \widetilde{\tau}$ respectively the image of $\mathfrak{l}$ in $\widetilde{\tau}$ are integral structures of $\widetilde{\tau}$.

Finally, the functor $r_\alpha$ and $\rl$ commute on the level of Grothendieck groups by \cite[Proposition 1.4(i)]{Dat}, \emph{i.e.}
\[r_\alpha(\rl(\widetilde{\pi}))=\rl(r_\alpha(\widetilde{\pi})).\]

An integral representation $\widetilde{\pi}\in \Irr(\ql)$ is called a \emph{lift} of $\pi\in \Irr(\fl)$ if $\rl(\widetilde{\pi})=[\pi]$.
If $\De=[a,b]_\rho$ is a segment over $\fl$, a segment $\TD=[a',b']_\trho$ over $\ql$ is called a lift of $\De$ if $\trho$ is a lift of $\rho$, $b'-a'=b-a$ and $a=a'\mod o(\rho)$. Finally, we extend this notation of a lift to segments $\Msl$ linearly.

In \cite[III]{VigBook} the Bushnell–Kutzko construction for $\ell$-adic and modular representations was carried out and in particular, it was shown that every cuspidal representation over $\fl$ admits a cuspidal lift $\trho$ to $\ql$. Moreover, if $\trho$ is such a lift of $\rho$, $f(\rho)=f(\trho)$.
\begin{lemma}[{\cite[Theorem 9.39]{MinSec}}]\label{T:multone}
    Let $\fm\in \Msl^{ap}$ and $\tfm$ a lift of $\fm$. Then
    $\Z(\fm)$ appears with multiplicity $1$ in $\rl(\Z(\tfm))$ and $\langle\fm\rangle$ appears with multiplicity $1$ in $\rl(\langle \tfm\rangle)$.
\end{lemma}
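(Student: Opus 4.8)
The plan is to reduce everything to the analogous statement for $\Z$ and then to play integral structures off against the two degeneration orders (over $\ql$ and over $\fl$).

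First I would observe that the two assertions are equivalent. On the Grothendieck groups the Aubert--Zelevinsky involution is an alternating sum of composites $\Ind_{P_\alpha}^{G_n}\circ\, r_\alpha$, so it commutes with $\rl$ (both parabolic induction and the Jacquet functors do, as recalled above), and since it sends the class of an irreducible to $\pm$ the class of an irreducible \cite{Aub}, the effectivity of $\rl(\langle\tfm\rangle)$ forces the multiplicity of $\langle\fm\rangle$ in $\rl(\langle\tfm\rangle)$ to agree with that of $\Z(\fm)$ in $\rl(\Z(\tfm))$. So it is enough to treat $\Z$.

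For the upper bound I would use the segment case: if $\TD$ lifts a segment $\De$ then $\rl(\Z(\TD))$ is irreducible and equal to $\Z(\De)$, a standard consequence of the modular type theory of \cite{VigBook} (see also \cite{MinSec}). Writing $\fm=\De_1+\cdots+\De_k$ and fixing a lift $\tfm=\TD_1+\cdots+\TD_k$ with $\TD_i$ lifting $\De_i$, aperiodicity of $\fm$ (hence of all its sub-multisegments) together with \Cref{T:msZ}(5) and a straightforward induction on $k$ shows that $\Z(\fm)$ occurs with multiplicity exactly $1$ in $\Z(\De_1)\times\cdots\times\Z(\De_k)$. Since $\Z(\tfm)$ is a subquotient of $\Z(\TD_1)\times\cdots\times\Z(\TD_k)$ (again \Cref{T:msZ}), writing this product as $[\Z(\tfm)]+e$ with $e$ effective in the Grothendieck group and applying $\rl$ (which commutes with parabolic induction) gives, using the segment case,
\[[\Z(\De_1)\times\cdots\times\Z(\De_k)]=\rl(\Z(\tfm))+\rl(e),\]
with $\rl(e)$ effective; hence the multiplicity of $\Z(\fm)$ in $\rl(\Z(\tfm))$ is at most $1$.

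For the reverse inequality I would exhibit $\Z(\fm)$ inside $\rl(\Z(\tfm))$ by means of integral structures. Order the segments so that $\Z(\tfm)$ is the (simple) socle of $V\coloneq\Z(\TD_1)\times\cdots\times\Z(\TD_k)$ over $\ql$; by the modular Zelevinsky classification \cite{MinSec} the product $\Z(\De_1)\times\cdots\times\Z(\De_k)$ over $\fl$ has, for the corresponding ordering, simple socle $\Z(\fm)$. Equipping $V$ with the natural integral structure $\mathfrak{l}$ built from integral structures of the $\Z(\TD_i)$, one has $\mathfrak{l}\otimes_\zl\fl\cong\Z(\De_1)\times\cdots\times\Z(\De_k)$ by the segment case and the compatibility of induction with $\rl$. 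Then $\mathfrak{l}^\circ\coloneq\mathfrak{l}\cap\Z(\tfm)$ is an integral structure of $\Z(\tfm)$, and as $\mathfrak{l}/\mathfrak{l}^\circ$ embeds into the $\ql$-vector space $V/\Z(\tfm)$ it is $\zl$-torsion free, hence flat, so $\mathfrak{l}^\circ\otimes_\zl\fl\hra\mathfrak{l}\otimes_\zl\fl$. Therefore $\rl(\Z(\tfm))$ is carried by the nonzero subrepresentation $\mathfrak{l}^\circ\otimes_\zl\fl$ of $\Z(\De_1)\times\cdots\times\Z(\De_k)$, whose socle is a nonzero subobject of the simple module $\Z(\fm)$, hence equals $\Z(\fm)$; so $\Z(\fm)\hra\mathfrak{l}^\circ\otimes_\zl\fl$ occurs in $\rl(\Z(\tfm))$. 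Combined with the upper bound this gives multiplicity exactly $1$, and the first paragraph then transfers the statement to $\langle\fm\rangle$.

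I expect the main obstacle to be this last step, and specifically the input from \cite{MinSec} that $\Z(\De_1)\times\cdots\times\Z(\De_k)$ has simple socle $\Z(\fm)$ over $\fl$ for a suitable ordering of the segments, together with matching that ordering with an arranged form over $\ql$. Aperiodicity of $\fm$ is what makes the argument close: it ensures both that $\Z(\fm)$ occurs only once in its Zelevinsky standard module (closing the upper bound) and that the socle over $\fl$ is $\Z(\fm)$ itself rather than the irreducible attached to some periodic multisegment, so that the integral-structure argument genuinely produces $\Z(\fm)$.
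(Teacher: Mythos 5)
Your first paragraph is exactly what the paper does: the $\langle-\rangle$-statement is deduced from the $\Z$-statement because the Aubert--Zelevinsky involution is an alternating sum of composites $\Ind_{P_\alpha}^{G_n}\circ r_\alpha$ and therefore commutes with $\rl$ on Grothendieck groups. For the $\Z$-statement itself, however, the paper gives no argument at all: it is quoted verbatim as \cite[Theorem 9.39]{MinSec}. So everything after your first paragraph is an attempt to reprove the cited theorem, and as such it is incomplete in ways that matter.

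Concretely: (i) your ``segment case'' ($\rl(\Z(\TD))=[\Z(\De)]$, i.e.\ irreducibility of the reduction of a Speh representation) is not a formal consequence of type theory; it is itself a nontrivial theorem of M\'inguez--S\'echerre, so at this point you are already leaning on the same body of results being cited. (ii) The multiplicity one of $\Z(\fm)$ in $\Z(\De_1)\times\cdots\times\Z(\De_k)$ over $\fl$ does not follow by a ``straightforward induction'' from \Cref{T:msZ}(5): that statement controls the occurrence of $\Z(\fm)$ in $\Z(\De_1)\times\Z(\De_2+\cdots+\De_k)$, but you must also exclude occurrences of $\Z(\fm)$ in $\Z(\De_1)\times\sigma$ for the \emph{other} constituents $\sigma$ of $\Z(\De_2)\times\cdots\times\Z(\De_k)$, which have the same cuspidal support; this exclusion is again part of the M\'inguez--S\'echerre classification, not of \Cref{T:msZ} as stated. (iii) Most seriously, your lower bound requires the $\fl$-product, taken in an ordering for which $\Z(\tfm)$ is the socle over $\ql$, to have simple socle $\Z(\fm)$ over $\fl$. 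An arranged form over $\ql$ need not reduce to a good ordering over $\fl$: segments unlinked over $\ql$ can become linked after reduction (for instance $[0,1]_\trho$ and $[4,5]_\trho$ with $o(\rho)=3$ reduce to the linked segments $[0,1]_\rho$ and $[1,2]_\rho$), so for your chosen order the socle of $\mathfrak{l}\otimes_\zl\fl$ may a priori be a different irreducible, and the lattice argument then only produces \emph{some} irreducible subrepresentation inside $\rl(\Z(\tfm))$, not $\Z(\fm)$. You flag this yourself as the main obstacle, but it is left open, and since the lift $\tfm$ is arbitrary you would need to prove that a simultaneously good ordering always exists. Given that the missing inputs are essentially the content of \cite[Theorem 9.39]{MinSec}, the cleanest repair is the paper's own: cite that theorem for $\Z$ and keep your first paragraph for the passage to $\langle-\rangle$.
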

\begin{proof}
    The claim for $\Z$ can be found in \cite[Theorem 9.39]{MinSec}, the one for $\langle-\rangle$ follows easily from the definition of $\langle-\rangle$ in \cite{Aub}. Indeed let us recall the definition of $(-)^*$. For $[\pi]$ an element in the Grothendieck group of $\Rep_n$, one first defines a non-zero element $\mathrm{D}([\pi])$ in the same Grothendieck group, which is represented by the cohomology of a complex obtained from $[\pi]$ by applying repeatedly the Jacquet functor and parabolic induction. In particular, if $[\pi]\le [\tau]$, then $\mathrm{D}([\pi])\le \mathrm{D}([\tau])$. Moreover, if $\pi$ is irreducible, there exists a unique irreducible summand $[\pi^*]$
    in $\mathrm{D}(\pi)$ with the same cuspidal support as $\pi$ and if $R=\ql$, then $\mathrm{D}(\pi)=[\pi^*]$.
The above construction implies that $\mathrm{D}$ commutes with $\rl$.
    In particular, $\rl(\langle\tfm\rangle)=\rl(\mathrm{D}(\Z(\tfm)))=\mathrm{D}(\rl(\Z(\tfm)))$, where the latter contains $\mathrm{D}(\Z(\fm))\ge [\langle\fm\rangle]$ by the first claim.
\end{proof}
\subsection{ Generic extensions}\label{S:genext}
In this section we recall the composition algebra of the cyclic quiver, \emph{c.f.} \cite{Rin}.
We fix $1<n\in \NN\cup\{\infty\}$ and consider the cycle quiver $Q$ with $n$ vertices, denoted by $I=\ZZ/ (n\ZZ)$, and oriented counter-clockwise. If $n=\infty$, we let $Q$ be the quiver $A_\infty$, oriented by pointing all arrows to the right.
We recall that a representation of $Q$ is nothing but a finite-dimensional $\ZZ/ (n\ZZ)$-graded $\CC$-vector $V$ space together with a linear map $T\colon V\mapsto V$ of weight $1$. We call the representation nilpotent if for large enough $N\in\NN$, $T^N=0$ and associate to $V$ the dimension vector $\grdim V\in \ZZ^n$, where the $i$-th entry of $\grdim V$ equals to $\dim_\CC V_i$.
Given a dimension vector $\db$ we let $E_\db=\{(V_\db,T)\}$ be the set of finite dimensional, nilpotent representations of $Q$ over $\CC$ with underlying graded vector-space \[V_\db\coloneq \bigoplus_{i\in \mathbb{Z}/(n\ZZ)}\CC^{\db_i}.\] We let $G_\db\coloneq \gl_{d_1}(\CC)\times\ldots\gl_{d_n}(\CC)$, which acts on $E_\db$ by conjugation. The orbits of this action are naturally parametrized as follows.
We let $\MsQ$ be the set of multisegments, \emph{i.e.} the formal finite sums of segments $[a,b],\,a\le b\in \ZZ$ up to the equivalence $[a,b]\sim[a+n,b+n]$. We write $[a,b]^\lor=[-b,-a]$, $l([a,b])=b-a+1$ and extend these operations linearly to $\MsQ$.
To $[a,b]\in \MsQ$ we associate the indecomposable representation $\lambda([a,b])$, whose underlying vector space has as a basis $b-a+1$ vectors $e_1,\ldots,e_{b-a+1}$, with $e_i$ in degree $i\mod n$ and $T(e_i)=e_{i+1}$ for $i\le b-a$ and $T(e_{b-a+1})=0$. The dimension vector of this representation, denoted by $\grdim [a,b]$, has as its $i$-th entry the number of integers $x$ such that $a\le x\le b$ and $x=i\mod n$.
More generally, we associate to a multisegment $\fm=[a_1,b_1]+\ldots+[a_k,b_k]$
the representation $\lambda(\fm)\coloneq \lambda([a_1,b_1])\oplus\ldots\oplus \lambda([a_k,b_k])$. We denote its dimension vector $\grdim\fm=\grdim[a_1,b_1]+\ldots+\grdim[a_k,b_k]$.
We call a multisegment aperiodic if it does not contain a multisegment of the form $[a,b]+\ldots+[a+n-1,b+n+1]$ and we denote the set of aperiodic multisegments by $\MsQ^{ap}$.

The $G_\db$-orbits $[E_\db]$ of $E_\db$ are then via the above construction in bijection with multisegments $\fm$ such that $\db=\grdim\fm$. For $M$ a representation in $E_\db$, we denote by $[M]=G_\db\cdot M$ its orbit.

We write $[M]\lp [N]$ for two orbits in $E_\db$ if $[N]$ is in the closure of $[M]$ with respect to the analytic topology. This relation
gives rise to the so-called \emph{degeneration order}.
We also recall the associative product
\[*\colon[E_\db]\times [E_{\db'}]\ra [E_{\db+\db'}]\]
given by sending $([M],[N])\mapsto [M]*[N]$ to the orbit of their \emph{generic extension},
\emph{c.f.} \cite[§3]{DenDu} for cyclic quivers and \cite[§2]{Rein} for Dynkin quivers. The generic extension of two representations $M$ and $N$ is defined as the set of
$X\in \mathrm{Ext}_Q^1(M,N)$ for which the dimension of the complex algebraic variety $\overline{[X]}$ is maximal, or equivalently, for which $\dim_\C\Ho_Q(X,X)$ is minimal. Any $X,X'$ in the generic extension of $M$ and $N$ lie in the same equivalence class, thus we can define $[M]*[N]$ as $[X]$ for some $X$ in the generic extension of $M$ and $N$.
\begin{lemma}[{\cite[Proposition 2.4]{Rein}, see also \cite[Proposition 3.4]{DenDu}}]\label{L:genorder}
Let $M\in E_\db, N\in E_{\db'}, X\in E_{\db+\db'}$. Then $[M]*[N]\lp [X]$ if and only if there exist $[M]\lp [M']$, $[N]\lp [N']$ such that there exists a short exact sequence $0\ra M'\ra X\ra N'\ra 0$. In particular, if $[M]\lp [M']$, $[N]\lp [N']$ then $[M]*[N]\lp [M']*[N']$.
\end{lemma}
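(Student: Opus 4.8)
The plan is to identify the orbit closure $\overline{[M*N]}$ inside $E_{\db+\db'}$ with the explicit closed set cut out by the existence of a suitable filtration, which makes both halves of the lemma transparent; this is Reineke's argument, and I would organise it as follows. Throughout I would write $\db''\coloneq\db+\db'$, with $N$ understood to have dimension vector $\db'$ (so that $M*N\in E_{\db''}$).

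I would use two auxiliary varieties. First, let $\mathcal F$ be the variety of pairs $(X',V)$ with $X'\in E_{\db''}$ and $V\subseteq X'$ a subrepresentation of dimension vector $\db$; realised as a closed subvariety of $E_{\db''}\times\mathrm{Gr}$, where $\mathrm{Gr}$ is the (projective) variety of graded subspaces of dimension vector $\db$ in $\CC^{\db''}$, the first projection $p\colon\mathcal F\to E_{\db''}$ is proper. Rigidifying, let $\mathcal F^\dagger$ be the variety of quadruples $(X',V,f,g)$ with $(X',V)\in\mathcal F$ and $f\colon\CC^{\db}\iso V$, $g\colon\CC^{\db'}\iso X'/V$ graded isomorphisms; it carries a free $G_{\db}\times G_{\db'}$-action whose quotient $\alpha\colon\mathcal F^\dagger\to\mathcal F$ is a torsor, and an equivariant morphism $\beta\colon\mathcal F^\dagger\to E_{\db}\times E_{\db'}$ recording the representation induced via $f$ on $\CC^{\db}$ (from the submodule $V$) together with the one induced via $g$ on $\CC^{\db'}$ (from the quotient $X'/V$). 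Second, let $C^1$ be the space of weight-$1$ maps $\CC^{\db'}\to\CC^{\db}$ and $\Gamma\coloneq E_{\db}\times E_{\db'}\times C^1$, with the projection $\beta_0\colon\Gamma\to E_{\db}\times E_{\db'}$ --- a trivial bundle --- and the morphism $\gamma\colon\Gamma\to E_{\db''}$ sending $(A,B,\phi)$ to the representation $\left(\begin{smallmatrix}A&\phi\\0&B\end{smallmatrix}\right)$ on $\CC^{\db''}=\CC^{\db}\oplus\CC^{\db'}$.

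Now put $\mathcal E\coloneq\{X'\in E_{\db''}:\ \exists\ 0\to M'\to X'\to N'\to 0,\ M'\cong M,\ N'\cong N\}$. Choosing a graded complement to a submodule gives $\mathcal E=G_{\db''}\cdot\gamma\bigl(\beta_0^{-1}([M]\times[N])\bigr)$, and since $\gamma(\beta_0^{-1}([M]\times[N]))\cong[M]\times[N]\times C^1$ is irreducible and $G_{\db''}$ is connected, $\mathcal E$ is irreducible. As the nilpotent representations of $Q$ of a fixed dimension vector fall into only finitely many $G_{\db''}$-orbits (being indexed by the finitely many multisegments of that weight), the closed, irreducible, $G_{\db''}$-stable set $\overline{\mathcal E}$ is a single orbit closure; its dense orbit lies in $\mathcal E$ (which is $G_{\db''}$-stable and dense in $\overline{\mathcal E}$) and is, among all orbits meeting $\mathcal E$, the one of maximal dimension --- equivalently the one whose members $X$ have $\dim_{\CC}\mathrm{Hom}_Q(X,X)$ minimal --- which is exactly $[M*N]$. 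The core of the argument is then the identity
\[\overline{\mathcal E}\;=\;\mathcal C\coloneq\{X'\in E_{\db''}:\ \exists\ 0\to M'\to X'\to N'\to 0\ \text{with}\ [M]\lp[M'],\ [N]\lp[N']\}.\]
For $\overline{\mathcal E}\subseteq\mathcal C$, note that $\{(X',V)\in\mathcal F:\ [M]\lp[V],\ [N]\lp[X'/V]\}$ equals $\alpha\bigl(\beta^{-1}(\overline{[M]}\times\overline{[N]})\bigr)$, the image under the torsor $\alpha$ of a closed $G_{\db}\times G_{\db'}$-saturated set, hence is closed in $\mathcal F$; its image under the proper map $p$ is the closed set $\mathcal C$, and $\mathcal C\supseteq\mathcal E$ forces $\mathcal C\supseteq\overline{\mathcal E}$. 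For $\mathcal C\subseteq\overline{\mathcal E}$, take $X'\in\mathcal C$; after a $G_{\db''}$-conjugation we may write $X'=\gamma(M',N',\phi_0)$ with $(M',N')\in\overline{[M]}\times\overline{[N]}$, and since $\beta_0$ is a trivial bundle $(M',N',\phi_0)\in\overline{[M]}\times\overline{[N]}\times C^1=\overline{\beta_0^{-1}([M]\times[N])}$; applying the continuous map $\gamma$ yields $X'\in\overline{\gamma(\beta_0^{-1}([M]\times[N]))}\subseteq\overline{\mathcal E}$.

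Granting $\overline{\mathcal E}=\mathcal C$, the lemma is immediate. If $[M*N]\lp[X]$ then $X\in\overline{[M*N]}=\overline{\mathcal E}=\mathcal C$, which says exactly that there exist $[M]\lp[M']$ and $[N]\lp[N']$ fitting into a short exact sequence $0\to M'\to X\to N'\to 0$. And if $M\lp M'$, $N\lp N'$, then $M'*N'$ lies in the version of $\mathcal E$ built from the pair $(M',N')$, which is contained in $\mathcal C=\overline{[M*N]}$, whence $[M*N]\lp[M'*N']$. The step demanding real care --- and carrying the content --- is the identity $\overline{\mathcal E}=\mathcal C$: the inclusion $\overline{\mathcal E}\subseteq\mathcal C$ relies on the properness of the quiver-Grassmannian bundle $p$, the reverse inclusion on the triviality of the bundle $\beta_0$, and the identification of the dense orbit of $\mathcal E$ with $[M*N]$ on the finiteness of the orbit set, a feature special to the nilpotent cyclic quiver.
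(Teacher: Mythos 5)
Your argument is correct: the paper itself gives no proof of this lemma (it simply cites Reineke and Deng--Du), and what you have written is a faithful reconstruction of exactly that standard argument — the properness of the quiver-Grassmannian incidence variety, the frame-bundle/torsor comparison with the block-triangular model, the identity $\overline{\mathcal E}=\mathcal C$, and the finiteness of nilpotent orbits to identify the dense orbit of $\overline{\mathcal E}$ with $[M*N]$. Note only that your convention (sub a degeneration of $M$, quotient a degeneration of $N$) matches the lemma as stated, even though the paper's definition of the generic extension loosely writes $\mathrm{Ext}^1_Q(M,N)$; your usage is the consistent one.
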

We write for a word $w=i_1\ldots i_k$ of indices in $I$
\[\fm_{gen}(w)=[i_1,i_1]*\ldots*[i_k,i_k]\] and denote the set of words in $I$ by $\Omega$.
We can describe $\fm_{gen}(w)$ recursively as follows, see for example \cite[p. 285 and Proposition 3.7]{DenDu}. 

For $\fm\in\MsQ$ and $i\in I$ we let \[i+\fm\coloneq\begin{cases}
    \fm+[i,i]&\text{ if there does not exist a segment of the form }[i+1,b]\text{ in }\fm,\\
    \fm+{}^+\De-\De&\text{ where }\De\text{ is the longest segment of the form }[i+1,b]\text{ in }\fm.
\end{cases}\]
Similarly, we let \[\fm+i\coloneq\begin{cases}
    \fm+[i,i]&\text{ if there does not exist a segment of the form }[a,i-1]\text{ in }\fm,\\
    \fm+\De{}^+-\De&\text{ where }\De\text{ is the longest segment of the form }[a,i-1]\text{ in }\fm.
\end{cases}\]
\begin{lemma}[{\cite[Proposition 3.7]{DenDu}}]\label{L:rules}
Using the above notation, we have $\fm_{gen}(iw)=i+\fm_{gen}(w)$ and $\fm_{gen}(wi)=\fm_{gen}(w)+i$. 
\end{lemma}
From now on we implicitly identify the isomorphism classes of representations of $Q$ with $\MsQ^{ap}$.
\begin{theorem}\label{T:serre}
    The map $\fm_{gen}\colon\Omega\ra\MsQ$ has image $\MsQ^{ap}$ and two words $w$ and $w'$ give rise to the same multisegment if and only if they are related by the degenerate Serre relations.
    \begin{enumerate}
        \item $ij=ji$ if $i$ and $j$ are not neighbours.
        \item $i(i+1)i=ii(i+1)$ and $i(i+1)(i+1)=(i+1)i(i+1)$ if $n>2$.
        \item $i(i+1)ii=ii(i+1)i$ if $n=2$.
    \end{enumerate}
\end{theorem}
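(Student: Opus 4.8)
The plan is to establish this in two halves: first that the image of $\fm_{gen}$ is exactly $\MsQ^{ap}$, and second that the fibers of $\fm_{gen}$ are precisely the equivalence classes under the degenerate Serre relations. For the image statement, the inclusion $\fm_{gen}(\Omega)\subseteq\MsQ^{ap}$ should follow by induction on word length using the recursive description $\fm_{gen}(iw)=i+\fm_{gen}(w)$: I would check that the operations $\fm\mapsto i+\fm$ and $\fm\mapsto\fm+i$ preserve aperiodicity, which is a direct combinatorial verification on the segment-bookkeeping (adding a box at position $i$ either opens a new segment $[i,i]$ or extends the longest available $[i+1,b]_\rho$, and in neither case can one create a periodic block of length $e(\rho)$ if none was present). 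For surjectivity onto $\MsQ^{ap}$, I would argue by induction on $l(\fm)$ (or on $\dim\fm$): given $\fm\in\MsQ^{ap}$ nonzero, pick a suitable vertex $i$ — e.g.\ one where a segment of $\fm$ ends at $i$, or more carefully one realizing the generic extension decomposition — and produce $\fm'\in\MsQ^{ap}$ with smaller length such that $i+\fm'=\fm$; applying the induction hypothesis to $\fm'$ and prepending $i$ finishes it. The cleanest route here is to invoke \Cref{L:genorder}: since $\fm_{gen}(w)=[i_1,i_1]*\cdots*[i_k,i_k]$ and the $*$-product is associative, one peels off the first letter and uses that $[i,i]*\fn$ is computed by exactly the operation $i+\fn$.

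For the relations: that words related by (1)--(3) give the same multisegment is again a finite check using the recursion — relation (1) is immediate since non-neighbouring vertices act on disjoint parts of the segment data, and (2)--(3) are small explicit computations of $i+(j+(i+\fm))$ versus the reordered versions, which one verifies on a generic $\fm$ by case analysis on which segments of the forms $[i+1,\cdot]$, $[i+2,\cdot]$ are present. The substantive direction is the converse: if $\fm_{gen}(w)=\fm_{gen}(w')$ then $w$ and $w'$ are Serre-equivalent. Here I would not attempt a direct combinatorial argument on words; instead the natural strategy is to recognize $\fm_{gen}$ as (essentially) the monomial-basis parametrization map for the composition/Hall algebra of $Q$, and cite the established fact — this is exactly the content of the cited works \cite{Rin}, \cite{DenDu} (see also \cite{Rein}) — that the generic-extension monomials modulo the degenerate Serre relations are in bijection with aperiodic multisegments. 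Concretely, one shows that the quotient of the free monoid $\Omega$ by relations (1)--(3) maps bijectively onto $\MsQ^{ap}$: injectivity is the hard converse, and it follows because both sides carry a well-defined associative product (the Serre-quotient monoid on one side, the $*$-product of \Cref{L:genorder} on the other) under which they are presented by the same generators $[i,i]$ and the same relations — any relation holding among the $[i,i]$ under $*$ is a consequence of (1)--(3), which is precisely the quantum-Serre presentation of the composition algebra specialized appropriately.

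The main obstacle is this last injectivity/completeness-of-relations claim: showing that (1)--(3) generate \emph{all} coincidences $\fm_{gen}(w)=\fm_{gen}(w')$. A self-contained proof would require either (a) a normal-form argument — defining a canonical word for each $\fm\in\MsQ^{ap}$ (e.g.\ reading off segments in a fixed order) and showing any word can be rewritten to the canonical one using only (1)--(3), with termination and confluence of the rewriting system — or (b) a dimension/counting argument matching the number of Serre-classes of words of a given content with $\#\MsQ^{ap}$ of that dimension vector, combined with the already-proven surjectivity. Route (a) is the more honest but more laborious; route (b) reduces the problem to a Hilbert-series computation that is already in the literature on cyclic-quiver Hall algebras. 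I expect to take route (b) and lean on \cite{DenDu}, \cite{Rin}, making the proof largely a matter of carefully transporting their statement into the present notation, with the forward direction (relations are respected) done by the explicit recursion above.
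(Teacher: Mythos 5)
Your proposal is correct and takes essentially the same route as the paper: for the hard direction (that the degenerate Serre relations account for all coincidences $\fm_{gen}(w)=\fm_{gen}(w')$) the paper simply cites Ringel's Theorems A and B, exactly as you plan to do, rather than giving a normal-form or counting argument. The only content the paper adds is the explicit surjectivity algorithm — peel off the shortest segment $[a,b]$ of $\fn\in\MsQ^{ap}$ such that every segment $[a',b-1]$ in $\fn$ has $a'\ge a$, so that $\fn=\fn'*[b,b]$ with $\fn'$ aperiodic and shorter — which is the same peeling induction you sketch (done on the right rather than the left).
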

\begin{proof}
For a proof see \cite[Theorem A, Theorem B]{Rin}, where the author considers the algebras $H_t(Q)$ we consider below at the specialisation $t$ being a prime number. We give a sketch of the argument for the sake of the reader, see also \cite[Theorem 4.2]{Rein}.
Let $t$ be an indeterminante and consider the $\mathbb{Q}[t]$-algebra $H_t(Q)$, which is generated by variables $x_1,\ldots,x_n$ satisfying the relations
\begin{enumerate}
    \item $x_ix_j=x_jx_i$ if $i,j$ are not neighbors in $Q$.
    \item $x_i^2x_{i+1}-tx_{i+1}x_i^2=(t+1)x_ix_{i+1}x_i,\,x_ix_{i+1}^2-tx_{i+1}^2x_i=(t+1)x_{i+1}x_ix_{i+1}$ if $n>2$.
    \item \[tx_2x_1^3-(t^2+t+1)(x_1x_2x_1^2+x_1^2x_2x_1)=tx_1^3,\, tx_1x_2^3-(t^2+t+1)(x_2x_1x_2^2+x_2^2x_1x_2)=tx_2^3\]
    if $n=2$.
\end{enumerate}
Given a dimension vector $\db$, we can ask for the rank of the free $\mathbb{Q}[t]$-submodule $H_t^\db(Q)$ of $H_t(Q)$ spanned by the monomials containing $x_i$ with multiplicity $\db_i$. Specializing at $t=1$, we have that $H_1(Q)$ is the universal enveloping algebra of a certain upper-triangular part of a Lie algebra, and obtain by \cite[Proposition 7.2]{Rin} that the rank equals to the number of aperiodic elements in $\MsQ$ with dimension-vector $\db$. The author achieves this by constructing an explicit PBW -basis of the latter space.
But on the other hand, we obtain by specializing at $t=0$ the associative algebra on $\Omega$ subject to the Serre relations. We have a morphism of algebras
\[H_0(Q)\ra \mathbb{Q}[\MsQ^{ap}],\]
where the right side is equipped with the generic extension product and we send $i\mapsto [i,i]$. Indeed, one just needs to check that the expressions involved in the Serre relations give rise to the same multisegments. For example, if $n>2$, \[i(i+1)i=[i,i+1]+[i,i]=ii(i+1).\] The other relations can be checked similarly.
The above map is surjective by \cite[§4]{Rin}, and since for $\db$ a cuspidal support, $H_0^\db(Q)$ has image in the multisegments with cuspidal support $\db$, the claim follows, because the dimensions agree.
\end{proof}
\begin{rem}
    There exists a second map $\fm_{crys}\colon \Omega\ra \MsQ$ linked to $\rho$-derivatives and certain crystal bases of quantum groups, see for example \cite{DroMod} or \cite{LecVas}. Even though $\fm_{crys}$ admits a similar, although slightly more involved, recursive description as $\fm_{gen}$, these two maps differ in general.
\end{rem}
If $w=i_1\ldots i_k\in \Omega$, we let $w^\lor=(-i_k)\ldots(-i_1)$. It follows from the definitions that $(i+\fm)^\lor=\fm^\lor+(-i)$ and hence \[\fm_{gen}(w^\lor)=\fm_{gen}(w)^\lor.\]
\subsubsection{ Degeneration order and Serre relations}\label{S:order}

In this subsection we describe how, given two words $v,w\in\Omega$, one can decide whether $[\fm_{gen}(v)]\lp[\fm_{gen}(w)]$. 
Elementary operations on $\MsQ$ can be defined in a manner completely analogously to those on $\Ms$ and the first step in answering the above question is the following.
\begin{prop}[{\cite[Theorem 2.2]{ZelIII}, \cite{orbitclos}, \cite[Theorem 3.12]{orbitclos2}}]\label{P:oclos}
The degeneration order and the order by elementary operation are equivalent, \emph{i.e.} \[[\lambda(\fm)]\lp [\lambda(\fn)]\] if and only if $\fm$ can be obtained by $\fn$ via finitely many elementary operations.
\end{prop}
Let $v,w\in \Omega$. We write $v\le w$ if $v$ can be obtained from $w$ by applying a finite sequence of the following moves.
\begin{enumerate}
    \item $ij\mapsto ji$ if $i$ and $j$ are not neighbors in $I$.
    \item If $n>2$, $(i+1)i\mapsto i(i+1)$ and $ii(i+1)\mapsto i(i+1)i$.
    \item If $n=2$, $i(i+1)(i+1)\mapsto (i+1)i(i+1)$.
\end{enumerate}
Note that in particular the Serre relations are invariant under the above moves.
The goal of this section is to prove the following proposition.
\begin{prop}\label{P:ordercomp}
    Let $v,w\in \Omega$. If $v\ge w$ then $\fm_{gen}(v)\le\fm_{gen}(w)$. If $n$ is infinite, then $\fm_{gen}(v)\le\fm_{gen}(w)$ implies $v\ge w$.
\end{prop}
It is possible that there exists a more straightforward proof than the direct, purely combinatorial proof we offer, namely by relating the questions to properties of PBW-basis as in the proof of \Cref{T:serre}.
Note that the second implication in the above proposition is wrong if $n$ is not infinite. For example if $n=3$, we can construct the following example.
$v=002211102$ and $w=022110021$. Then $w$ is maximal with respect to the above order but
\[\fm_{gen}(v)=[0,2]+[0,1]+[2,3]+[2,2]+[1,1]>\fm_{gen}(w)=[0,2]+[-1,1]+[2,3]+[1,1].\]
\begin{proof}[Proof of \Cref{P:ordercomp}]
    Assume first that $v\le w$. Then it suffices by \Cref{L:genorder} to show that $[i,i]+[i+1,i+1]=(i+1)+i\gp i+(i+1)=[i,i+1]$ and $i+i+(i+1)= i+(i+1)+i$ if $n>2$. The first is seen to be a simple elementary operation whereas the second is a Serre relation. The case $n=2$ can be checked analogously.

    For the other direction (in the case $n$ being infinite) write $\fn=\fm_{gen}(v)$ and $\fm=\fm_{gen}(w)$, $\fn\lpp\fm$. Using the Serre-relations, it suffices to find some words $v'\lp w'$ with $\fn=\fm_{gen}(v')$ and $\fm=\fm_{gen}(w')$. 
    We thus will construct for any aperiodic $\fn\lpp\fm$ two such words $v'\le w'$ via induction on the length of $\fm$. 
    
     It clearly suffices to treat the case where $\fn$ is obtained from $\fm$ via one elementary operation $[a,b]+[a',b']\mapsto[a,b']+[a,b']$, $a\le a'+1\le b\le b'+1$ and $\fn\lpp\fm$ is minimal, \emph{i.e.} there exists no $\fk$ with $\fn\lpp\fk\lpp\fm$. This implies in particular the following. If there exists $[c,d]\in \fm$ with $a\le c\le a'\le b\le d\le b',$ then $[c,d]$ is one of $[a,b],[a',b'],[a,b']$ or $[a',b]$. Indeed, if there would exist a $[c,d]$ different from these four segments we could decompose the above elementary operation in the following way. Assume that $a<c<a'$, the other case follows analogously. Then we can first apply $[a',b']+[c,d]\mapsto [a',d]+[c,b']$ to $\fm$, then $[a,b]+[a',d]\mapsto [a',b]+[a,d]$ and finally $[a,d]+[c,b']\mapsto [a,b']+[c,d]$ to obtain $\fn$, a contradiction to the minimality of the elementary operation. 

    Note that it makes no difference whether we prove the claim for $\fm$ or $\fm^\lor$, thus we can assume without loss of generality that $b-a\ge b'-a'$. We now let $[c,d]$ be the longest segment in $\fm$ with $a\le c$ such that $[c+1,d+1]$ does not appear in $\fm$. Set $\fm'=\fm-[c,d]+[c+1,d]$ and by construction $c+\fm'=\fm$.
    If $[c,d]$ is not the precise copy of $[a',b']$ involved in the elementary operation, it follows straightforwardly that the elementary operation descends to an elementary operation on $\fm'$ yielding a multisegment $\fn'\lp\fm'$ with $c+\fn'=\fn$. In this case the claim follows by the induction hypothesis. On the other hand, if $[c,d]=[a',b']$, it follows by construction and the assumption $b-a\ge b'-a'$ that $b-a=b'-a'$ and for all $\ain{i}{0}{a'-a}$, $[a+i,b+i]$ appears in $\fm$. By the minimality of the elementary operation this implies that $[a',b']=[a-1,b-1]$. Moreover, $[a,b]$ is a longest segment in $\fm$.

    It follows that for $\fm''=\fm-[a,b]-[a',b']+[a+1,b]+[a'+1,b']$ we have $a+(a+1)+\fm'=\fn$ and $(a+1)+a+\fm'=\fm$, finishing the claim.
\end{proof}
Let $w=i_1\ldots i_l\in\Omega$ and $\fm\in \MsQ$. We call $w$ a \emph{descendant} of $\fm$ if it is obtained from $\fm$ in the following, recursive, way.
Write $\fm=[a_1,b_1]+\ldots+[a_k,b_k]$ and choose $\ain{i}{1}{k}$. Then choose $w'$ a descendant of $\fm'=[a_1,b_1]+\ldots+[a_i+1,b_i]+\ldots+[a_k,b_k]$. If $\fm'$ is empty, $w'$ is the empty word. Finally, $w$ is a descendant of $\fm$ if it is of the form $w=a_iw'$ for some $a_i$ and $w'$ as above.
\begin{lemma}\label{L:descword}
    Let $w\in \Omega$ and $\fm\in \MsQ$. Then $w$ is a descendant of $\fm$ if and only if $\fm_{gen}(w)\lp\fm$.
\end{lemma}
\begin{proof}
    We argue by induction on the length of $\fm$, the case of length $1$ being trivially true, and we use the notation empolyed above the lemma. First assume that $w$ is a descendant of $\fm$ and write $w=a_iw'$, $w'$ a descendant of $\fm'$, as above the lemma. By the induction hypothesis we have that $\fm_{gen}(w')\lp\fm'$ and hence $\fm_{gen}(w)\lp a_i+\fm'$. It thus suffices to show that $a_i+\fm'\lp\fm$, which follows quickly. If $[a_i+1,b_i]$ is the longest segment in $\fm'$ starting in $a_i+1\mod n$, we have in fact equality. Otherwise, let $[a_i+1,d]$ be the respective longest segment. Then $a_i+\fm'=\fm-[a_i+1,d]+[a_i,d]$. Note that we can perform the elementary operation $[a_i,b_i]+[a_i+1,d]\mapsto [a_i,d]+[a_i+1,b_i]$ on $\fm$, yielding $a_i+\fm'$, and thus proving the claim.

    For the other direction, assume that $\fm_{gen}(w)\lp\fm$ and write $w=cw',\, c\in I$. Then it follows from \cite[Lemma 6.3.3]{DroMod} that $\ain{c}{a_1}{a_k}$ and $\fm_{gen}(w')\lp \fm'$, where $\fm'$ is of the form $[a_1,b_1]+\ldots+[a_j+1,b_j]+\ldots+[a_k,b_k]$ for some $j$ with $a_j=c\mod n$. The claim follows immediately.
\end{proof}
\section{Whittaker models}
We follow the setup of \cite{BerZel}, (\emph{c.f.} \cite{KurMat} for the case of non-zero characteristic).
Let $\psi_R$ be an additive character $\psi_R\colon \Ff\ra R$. Moreover, we demand that $\psi_\ql$ is $\zl$-valued and $\psi_\ql\otimes_\zl\fl=\psi_\fl$. By abuse of notation, we will from now on write $\psi=\psi_R$.
We extend $\psi$ to $N_n$ by defining \[\psi(u)=\psi\left(\sum_{i=1}^{n-1}u_{i,i+1}\right).\]
We define the space of Whittaker functions of $G_n$ with respect to $\psi$ as \[\Wc(\psi)=\Wc_n(\psi)\coloneq\]\[ =\ind_{N_n}^{G_n}\psi=\{W\colon G_n\ra R\text{ loc. constant}:\, W(ug)=\psi(u)W(g)\text{ for all }u\in N_n,\, g\in G_n\},\]
on whom $G_n$ acts by right-translation.
We call $\pi\in \Rep_n$ of Whittaker type if \[\dim_R\Ho_{G_n}(\pi,\Wc(\psi))=1,\] in which case we denote the image of $\pi$ in $\Wc(\psi)$ by $\Wc(\pi,\psi)$. In particular, $\Wc(\pi,\psi)$ is socle irreducible and its unique irreducible subrepresentation appears with multiplicity $1$ in it. Moreover, if \[\Wc(\pi,\psi)\ra \Wc(\pi',\psi)\] is a non-zero map for a second representation $\pi'$ of Whittaker type, it is injective.
If $\pi$ is on top of that irreducible we call it \emph{generic}.
Let $W$ be a Whittaker function. We then denote the map $\widetilde{W}$ by \[g\mapsto \widetilde{W}(g)\coloneq W(w_n(g^{-1})^t).\]
Finally, if $\pi$ is of Whittaker type, we denote by \[{\Wc(\pi,\psi)}^\mathfrak{c}=\{\widetilde{W}(g):\, W\in \Wc(\pi,\psi)\}=\Wc({\pi}^\mathfrak{c},\psi^{-1}).\]
We recall that if $\pi_1$ and $\pi_2$ is Whittaker type, so is $\pi_1\times \pi_2$, see \cite{BerZel}, \cite{BerZelI}, (\emph{c.f.} also \cite[III.1.10]{VigBook}), and $\Wc(\pi_1\times\pi_2,\psi)=\Wc(\Wc(\pi_1,\psi)\times \Wc(\pi_2,\psi),\psi)$. 
We denote by $\Rep_{W,\psi,n}$ the set of finite length subrepresentations of $\Wc_n(\psi)$ which are of Whittaker type and set \[\Rep_{W,\psi}\coloneq \bigcup_{n\in\mathbb{N}}\Rep_{W,\psi,n}.\]  Then $\Rep_{W,\psi}$ can be equipped with an associative product \[*\colon \Rep_{W,\psi}\times \Rep_{W,\psi}\ra \Rep_{W,\psi},\, (\pi,\pi')\mapsto \Wc(\pi\times \pi',\psi).\]
Associativity is an easy consequence of the uniqueness of the equality $\Wc(\pi_1\times\pi_2,\psi)=\Wc(\Wc(\pi_1,\psi)\times \Wc(\pi_2,\psi),\psi)$.

If $\De=[a,b]_\rho$ is a segment, the representation $\langle\De\rangle$ is of Whittaker type if $l(\De)<e(\rho)$ by \cite[Remark 8.14]{MinSec} and every cuspidal representation is of Whittaker type.
\subsection{ Derivatives}
We recall the four exact functors in the definition of the Bernstein Zelevinsky derivatives, see \cite{BerZel}, \cite{BerZelI}, \emph{c.f.} also \cite[III.1]{VigBook}.
Recall also the groups $P_{n}$ from \Cref{S:notations} and write the unipotent subgroups as \[U_{n-1,1}\coloneq\left\{\begin{pmatrix}
    1&x\\&1
\end{pmatrix}\in G_n:\, x\in M_{n-1,1}\right\}.\]
\begin{enumerate}
    \item $\Psi^+\colon \Rep_{n-1}\ra \Rep(P_n)$, the extension by the trivial representation of $U_{n-1,1}$ and twisted by $\abs^{\frac{1}{2}}$.
    \item $\Psi^-\colon \Rep(P_n)\ra \Rep_{n-1}$, the $U_{n-1,1}$-coinvariants twisted by $\abs^{-\frac{1}{2}}$.
    \item $\Phi^+=\Ind_{P_{n-1}U_{n-1,1}}^{P_n}(-\otimes \psi)\colon \Rep(P_{n-1})\ra \Rep(P_n)$
    \item $\Phi^-\colon \Rep(P_n)\ra \Rep(P_{n-1})$, the $(U_{n-1,1},\psi)$-coinvariants twisted by $\abs^{-\frac{1}{2}}$.
\end{enumerate}
Let $\tau\in \Rep(P_n),\, \ain{k}{1}{n}$ and set $\tau_{(k)}\coloneq (\Phi^-)^{k-1}(\tau)$ and $\tau^{(k)}\coloneq \Psi^-(\tau_{(k)})$. For $\pi\in \Rep_n$ we set $\pi_{(k)}\coloneq (\restr{\pi}{P_n})_{(k)},\, \pi^{(k)}\coloneq (\restr{\pi}{P_n})^{(k)}$ and $\pi^{(0)}\coloneq\pi$.
\subsection{ Derivatives and Whittaker models}
We recall also the Kirillov model of a representation $\pi$ of Whittaker type. Namely, define it as the $P_n$-representation given by \[\Kr(\pi,\psi)\coloneq \{\restr{W}{P_n}:\, W\in \Wc(\pi,\psi)\}\subseteq \ind_{N_n}^{P_n}\psi.\]
\begin{theorem}[{\cite[4.3]{MattHelm}, \cite{MatMoss}, \cite{KurMatIII}}]\label{T:Kirillow}
Let $\pi\in \Rep_n$ be of Whittaker type.
    The map $W\mapsto \restr{W}{P_n}$ is injective on $\Wc(\pi,\psi)$.
\end{theorem}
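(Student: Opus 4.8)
The plan is to show that the kernel $V_0\coloneq\{W\in\Wc(\pi,\psi):\restr{W}{P_n}=0\}$ of the map in the statement is zero. By definition of $\Kr(\pi,\psi)$ this map is the surjection $\Wc(\pi,\psi)\sra\Kr(\pi,\psi)$, $W\mapsto\restr{W}{P_n}$, and it is $P_n$-equivariant (right translation by $P_n$ commutes with restriction of functions to $P_n$), so $V_0$ is a $P_n$-subrepresentation of $\Wc(\pi,\psi)$ and the statement is equivalent to $V_0=0$. Since $\Wc(\pi,\psi)$ is a $G_n$-quotient of $\pi$ and $\restr{\pi}{P_n}$ has finite length, $V_0$ is a $P_n$-module of finite length. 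Recall that for a finite length $P_n$-module $\tau$ the Bernstein--Zelevinsky filtration is finite and exhaustive with graded pieces $(\Phi^+)^{k-1}\Psi^+(\tau^{(k)})$, $k=1,\dots,n$, and that the functors $(\Phi^+)^{k-1}\Psi^+$ are exact and kill only the zero object; hence $\tau=0$ if and only if $\tau^{(k)}=0$ for every $k$. It therefore suffices to prove $(V_0)^{(k)}=0$ for $k=1,\dots,n$. The device is the exact sequence $0\ra V_0\ra\Wc(\pi,\psi)\ra\Kr(\pi,\psi)\ra 0$: applying the exact functor $(-)^{(k)}$ yields $0\ra (V_0)^{(k)}\ra\Wc(\pi,\psi)^{(k)}\ra\Kr(\pi,\psi)^{(k)}\ra 0$, so $(V_0)^{(k)}=0$ is equivalent to $\Wc(\pi,\psi)^{(k)}\ra\Kr(\pi,\psi)^{(k)}$ being injective.

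For the top derivative $k=n$ this is immediate. Since $\pi$ is of Whittaker type, $\pi^{(n)}$ is one-dimensional; as $\Wc(\pi,\psi)$ is a $G_n$-quotient of $\pi$ carrying the nonzero Whittaker functional $W\mapsto W(1)$ (if all $W$ had $W(1)=0$, all translates would vanish), exactness of $(-)^{(n)}$ shows $\pi^{(n)}\sra\Wc(\pi,\psi)^{(n)}$ is a surjection onto a nonzero space from a one-dimensional one, hence an isomorphism, and $\Wc(\pi,\psi)^{(n)}=\pi^{(n)}$ is one-dimensional. On the other hand $W|_{P_n}\mapsto (\restr{W}{P_n})(1)=W(1)$ is a nonzero Whittaker functional on $\Kr(\pi,\psi)$, so $\Kr(\pi,\psi)^{(n)}\neq 0$; a surjection from a one-dimensional space onto a nonzero space is an isomorphism, forcing $(V_0)^{(n)}=0$.

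The remaining derivatives $k<n$ I would treat by induction on $n$ using the exact functor $\Phi^-\colon\Rep(P_n)\ra\Rep(P_{n-1})$, which satisfies $\tau^{(k)}\cong(\Phi^-\tau)^{(k-1)}$ (up to the normalising twist by $\abs^{\pm 1/2}$), together with $\Psi^-$; by the faithfulness of the Bernstein--Zelevinsky filtration it is then enough to show $\Phi^-(V_0)=0$ and $\Psi^-(V_0)=0$, i.e.\ that $\Phi^-$ and $\Psi^-$ applied to $\restr{\Wc(\pi,\psi)}{P_n}\sra\Kr(\pi,\psi)$ remain injective on the source. The content is to identify these with the analogous restriction maps attached to the data $\Phi^-(\restr{\pi}{P_n})$ and to $\pi^{(1)}$: concretely, that restricting a Whittaker function to $P_n$ and then passing to $(N_{n-1},\psi)$-coinvariants (resp.\ $N_{n-1}$-coinvariants) loses no more information than first passing to those coinvariants on $G_n$. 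Granting this, the relevant smaller module is again of Whittaker type --- its top derivative is $\pi^{(n)}$, one-dimensional --- so the induction hypothesis gives injectivity there, forcing $\Phi^-(V_0)=\Psi^-(V_0)=0$ and, with the previous step, $V_0=0$. The base case $n=1$ is trivial, since then $P_1$ is trivial, Whittaker type means $\pi$ is a character, and $W\mapsto W(1)$ is injective on the one-dimensional space $\Wc(\pi,\psi)$.

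The main obstacle is exactly this last compatibility of the derivative functors $\Phi^{\pm},\Psi^{\pm}$ with the passage to the Kirillov model; it rests on the Bernstein--Zelevinsky analysis of how $\restr{(-)}{P_n}$ interacts with these functors, combined with the asymptotic behaviour of Whittaker functions along the mirabolic, which is the heart of the cited works of Matringe--Moss, Matringe and Kurinczuk--Matringe. The delicate point is that $V_0$ is only a $P_n$-module --- not a $G_n$-module, since translating a function that vanishes on $P_n$ only makes it vanish on a translate of $P_n$ --- so one genuinely has to control the lower derivatives; everything else (finiteness and faithfulness of the Bernstein--Zelevinsky filtration, exactness of the derivative functors, one-dimensionality of $\pi^{(n)}$ for $\pi$ of Whittaker type) is standard.
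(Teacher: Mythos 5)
Your formal skeleton is correct as far as it goes: the kernel $V_0$ of $W\mapsto\restr{W}{P_n}$ is a $P_n$-submodule, it vanishes if and only if all its Bernstein--Zelevinsky derivatives vanish, and your top-derivative step (one-dimensionality of $\pi^{(n)}$ for $\pi$ of Whittaker type, nonvanishing of evaluation at $1$ on $\Kr(\pi,\psi)$, hence $(V_0)^{(n)}=0$) is sound. But note first that the paper does not prove this theorem at all: it is imported from Helm, Matringe--Moss and Kurinczuk--Matringe. More importantly, your treatment of the derivatives $k<n$, which is where the entire content of the theorem lies, is not a proof: you explicitly ``grant'' the compatibility of $\Phi^-,\Psi^-$ with passage to the Kirillov model, and that compatibility is essentially the theorem itself.

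Concretely, the induction on $n$ does not close as you set it up. The module $\Phi^-(\restr{\Wc(\pi,\psi)}{P_n})$ is only a $P_{n-1}$-module; it is not, in any natural way, the restriction to $P_{n-1}$ of the Whittaker model of a $G_{n-1}$-representation of Whittaker type, so there is no ``analogous restriction map'' to which your induction hypothesis can be applied, and your remark that ``the relevant smaller module is again of Whittaker type, its top derivative being $\pi^{(n)}$'' conflates a one-dimensional top derivative of a $P_{n-1}$-module with the notion of Whittaker type, which is defined only for $G_{n-1}$-representations. For the $\Psi^-$-direction the relevant $G_{n-1}$-module is a quotient of $\pi^{(1)}$, which for $\pi$ of Whittaker type is in general \emph{not} of Whittaker type (its space of Whittaker functionals may have dimension $0$ or larger than $1$), so the induction hypothesis again does not apply. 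Finally, identifying $\Phi^-$ and $\Psi^-$ of $\Kr(\pi,\psi)$ with Whittaker-theoretic data is exactly what \Cref{L:phirest} and \Cref{L:limit} address, and even those give only partial descriptions (restriction to a corner copy of $P_{n-k}$, and an injection defined by a limit for subrepresentations admitting a central character); upgrading them to the statement that $\Wc(\pi,\psi)^{(k)}\ra\Kr(\pi,\psi)^{(k)}$ is injective is the hard structural input of the cited works, not a routine functoriality. As written, your argument is an accurate reduction of the theorem to its essential difficulty, not a proof of it.
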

Moreover, we have the following description of $(\Phi^-)^k$.
\begin{lemma}[{\cite[Proposition 1.3]{CogPia}}]\label{L:phirest}
    Let $\pi$ be of Whittaker type. Then we can identify $(\Phi-)^k(\Kr(\pi,\psi))$
    with the space
    \[\left\{p\mapsto \lvert\det(p)\lvert^{-\frac{k}{2}}W\begin{pmatrix}
        p&\\&1_k
    \end{pmatrix}:\, p\in P_{n-k},\, W\in \Wc(\pi,\psi)\right\}.\]
\end{lemma}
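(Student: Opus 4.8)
The plan is to induct on $k$. The case $k=0$ is the definition of the Kirillov model, \Cref{T:Kirillow} ensuring that $\Kr(\pi,\psi)$ really is $\{\restr{W}{P_n}:W\in\Wc(\pi,\psi)\}$. The inductive step follows from the following single-step statement, applied repeatedly: for $m\le n$ and any smooth $P_m$-subrepresentation $\mathcal{V}\subseteq\ind_{N_m}^{P_m}\psi$, the representation $\Phi^-(\mathcal{V})$ is the $P_{m-1}$-subrepresentation
\[\left\{p\mapsto\lvert\det(p)\lvert^{-\frac{1}{2}}\,V\begin{pmatrix}p&\\&1\end{pmatrix}:\ p\in P_{m-1},\ V\in\mathcal{V}\right\}\subseteq\ind_{N_{m-1}}^{P_{m-1}}\psi,\]
the isomorphism being induced by $V\mapsto\big(p\mapsto\lvert\det(p)\lvert^{-\frac{1}{2}}V(\mathrm{diag}(p,1))\big)$. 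Granting this, one applies it $k$ times starting from $\mathcal{V}=\Kr(\pi,\psi)$; since $\lvert\det\mathrm{diag}(q,1)\lvert=\lvert\det q\lvert$, at each step the determinantal factor acquires one more $\lvert\det\lvert^{-1/2}$, so after $k$ steps one is left exactly with the functions $p\mapsto\lvert\det(p)\lvert^{-k/2}W(\mathrm{diag}(p,1_k))$, $W\in\Wc(\pi,\psi)$.

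To prove the single-step statement, recall that $P_m=G_{m-1}\ltimes V_m$ with $V_m=\{1_m+\sum_{i<m}x_iE_{i,m}:x_i\in\Ff\}\cong\Ff^{m-1}$ the abelian unipotent radical (the ``last column''), that $V_m\subseteq N_m$, and that $\Phi^-$ is the functor of $(V_m,\theta)$-coinvariants twisted by $\lvert\det\lvert^{-1/2}$, where $\theta=\restr{\psi}{V_m}$, i.e.\ $\theta(v)=\psi(v_{m-1,m})$, and $P_{m-1}$ is exactly the stabiliser of $\theta$ inside $G_{m-1}$. Three points need checking. \emph{(i)} The displayed space lies in $\ind_{N_{m-1}}^{P_{m-1}}\psi$, because for $u\in N_{m-1}$ one has $\mathrm{diag}(u,1)\in N_m$ with $\psi(\mathrm{diag}(u,1))=\psi(u)$, and $\lvert\det(up)\lvert=\lvert\det(p)\lvert$. \emph{(ii)} The map $V\mapsto(p\mapsto V(\mathrm{diag}(p,1)))$ kills the relations $v\cdot V-\theta(v)V$ defining the coinvariants: for $p\in P_{m-1}$ and $v\in V_m$ write $\mathrm{diag}(p,1)\,v=v'\,\mathrm{diag}(p,1)$ with $v'=\mathrm{diag}(p,1)\,v\,\mathrm{diag}(p^{-1},1)\in V_m$; a direct matrix computation gives $v'_{m-1,m}=(\text{last row of }p)\cdot(\text{column vector of }v)$, and since $p$ is mirabolic its last row is $(0,\dots,0,1)$, so $v'_{m-1,m}=v_{m-1,m}$ and $\theta(v')=\theta(v)$, whence $V(\mathrm{diag}(p,1)v)=\theta(v)V(\mathrm{diag}(p,1))$. \emph{(iii)} The resulting map from $\mathcal{V}_{V_m,\theta}$ is an isomorphism onto the (untwisted) space $\{p\mapsto V(\mathrm{diag}(p,1))\}$: since $\Phi^-$ is exact, $\mathcal{V}\hookrightarrow\ind_{N_m}^{P_m}\psi$ gives an injection $\mathcal{V}_{V_m,\theta}\hookrightarrow(\ind_{N_m}^{P_m}\psi)_{V_m,\theta}$, and $(\ind_{N_m}^{P_m}\psi)_{V_m,\theta}$ is canonically $\ind_{N_{m-1}}^{P_{m-1}}\psi$ with the canonical map being restriction along $p\mapsto\mathrm{diag}(p,1)$ --- this is the classical identity $\Phi^-(\ind_{N_m}^{P_m}\psi)\cong\ind_{N_{m-1}}^{P_{m-1}}\psi$, which follows from $\ind_{N_m}^{P_m}\psi\cong\Phi^+(\ind_{N_{m-1}}^{P_{m-1}}\psi)$ and $\Phi^-\Phi^+\cong\mathrm{id}$, see \cite[III.1]{VigBook}, \cite{BerZel}; as $\mathcal{V}\twoheadrightarrow\mathcal{V}_{V_m,\theta}$ and the composite $\mathcal{V}\to\mathcal{V}_{V_m,\theta}\hookrightarrow\ind_{N_{m-1}}^{P_{m-1}}\psi$ has image $\{p\mapsto V(\mathrm{diag}(p,1))\}$, that surjection is also injective, hence bijective. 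Twisting by $\lvert\det\lvert^{-1/2}$ and checking the $P_{m-1}$-equivariance of the twist then yields the single-step statement.

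The only step carrying real content is \emph{(ii)}: the point is that, because $p\in P_{m-1}$ is mirabolic and hence has last row $(0,\dots,0,1)$, conjugation by $\mathrm{diag}(p,1)$ fixes $\theta$, so right translation by $V_m$ acts on the restricted functions $p\mapsto V(\mathrm{diag}(p,1))$ by the \emph{scalar} $\theta$ and the passage to coinvariants does not collapse them; everything else is bookkeeping of the $\lvert\det\lvert^{-1/2}$ twists and formal consequences of the exactness of the Bernstein--Zelevinsky functors. Finally, note that the Whittaker-type hypothesis on $\pi$ is used only so that $\Kr(\pi,\psi)$ makes sense as an honest space of functions on $P_n$ (via one-dimensionality of $\Ho_{G_n}(\pi,\Wc(\psi))$ and \Cref{T:Kirillow}); the argument applies verbatim to any smooth $P_n$-subrepresentation of $\ind_{N_n}^{P_n}\psi$.
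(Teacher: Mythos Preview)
Your proof is correct and is essentially the argument of \cite[Proposition~1.3]{CogPia} that the paper cites without reproducing; the paper itself only remarks that the same method works over $R\in\{\fl,\ql\}$, which is exactly what your field-agnostic treatment establishes. The inductive step you isolate (restriction along $p\mapsto\mathrm{diag}(p,1)$ realises the $(V_m,\theta)$-coinvariants) is the content of their proof, and your verification of point~(ii) via the mirabolic's last row being $(0,\ldots,0,1)$ is the key computation.
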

Note that the authors only prove the claim for $R=\CC$, however the same method works for arbitrary base fields. 
Finally, the description of the space $\Psi^-(\Phi^-)^k(\Kr(\pi,\psi))$ is a bit more tricky.
Let $\tau$ be a subrepresentation of $\Kr(\pi,\psi)^{(k)}$ with central character $\chi$.
Let $\sigma$ be the inverse image of $\tau$ in $\Kr(\pi,\psi)$. Define for $W\in \sigma$ and $g\in G_{n-k}$ the map \begin{equation}\label{E:lim}S(W)(g)=\lim_{z\ra 0}\lvert z\lvert^{\frac{k-n}{2}}\lvert\det(g)\lvert^{\frac{k}{2}}\chi^{-1}(z)W\begin{pmatrix}
    zg&\\&1_k
\end{pmatrix}.\end{equation}
Here $z\in \Ff^\times$ is seen as an element of $Z_{n-k}$ and the limit becomes stationary for $z$ small enough. 
\begin{prop}[{\cite[Proposition 1.7]{CogPia}, \cite[Corollary 2.1]{MatIII}}]\label{L:limit}
    The map $S\colon \sigma\mapsto \Wc(\psi)$ induces the non-zero, injective map
\[\overline{S}\colon \tau\hra \Wc(\psi).\]
\end{prop}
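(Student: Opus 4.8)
The plan is to carry over to a general coefficient field $R$ the argument of \cite[Proposition 1.7]{CogPia} (see also \cite[Corollary 2.1]{MatIII}), reading $\overline{S}$ as the concrete realization, inside the Whittaker space, of the passage from $\Kr(\pi,\psi)$ to its $k$-th Bernstein-Zelevinsky derivative. The only structural ingredients are the exact sequence of mirabolic representations $0\to\Phi^+\Phi^-\to\mathrm{id}\to\Psi^+\Psi^-\to 0$ and the resulting finite Bernstein-Zelevinsky filtration of $\Kr(\pi,\psi)$, whose successive graded pieces are of the form $(\Phi^+)^{j-1}\Psi^+$ applied to the derivatives $\Kr(\pi,\psi)^{(j)}$, $j\geq 1$; all of this is developed over $\zl$ and $\fl$ in \cite[III.1]{VigBook}, so no characteristic-zero input enters and the point of the proof is precisely this observation. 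I would verify, in order: (i) that for $W\in\sigma$ the limit in \eqref{E:lim} exists, that $S(W)\in\Wc_{n-k}(\psi)$, and that $S$ is $G_{n-k}$-equivariant up to the normalizing twist; (ii) that $S$ annihilates $\ker(\sigma\twoheadrightarrow\tau)$, so it descends to $\overline{S}\colon\tau\to\Wc(\psi)$; (iii) that $\overline{S}$ is injective; (iv) that $\overline{S}\neq 0$.

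For (i): applying \Cref{L:phirest} with its parameter taken to be $k-1$ identifies the image of $W$ in $(\Phi^-)^{k-1}\Kr(\pi,\psi)$ with the function $p\mapsto|\det p|^{-(k-1)/2}W(\mathrm{diag}(p,1_{k-1}))$ on the mirabolic of $G_{n-k+1}$; the remaining functor $\Psi^-$ amounts to passing to coinvariants under its abelian unipotent radical and twisting by $|\det|^{-1/2}$, and the theory of the asymptotic expansion of Kirillov functions — equivalently smoothness of $W$ together with finiteness of the Bernstein-Zelevinsky filtration — shows that $z\mapsto|z|^{(k-n)/2}\chi^{-1}(z)W(\mathrm{diag}(zg,1_k))$ is eventually constant in $z$, as asserted in the text preceding the statement. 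The Whittaker transformation law is then a change of variables: for $u\in N_{n-k}$ and a scalar $z$ one has $\mathrm{diag}(zug,1_k)=\mathrm{diag}(u,1_k)\,\mathrm{diag}(zg,1_k)$ with $\mathrm{diag}(u,1_k)\in N_n$, so the $(N_n,\psi)$-equivariance of $W$ together with $|\det(ug)|=|\det g|$ gives $S(W)(ug)=\psi(u)S(W)(g)$; the analogous computation on the right, for right translation $R_{g_0}$ by $\mathrm{diag}(g_0,1_k)\in P_n$, yields $S(R_{g_0}W)=|\det g_0|^{-k/2}R_{g_0}(S(W))$, whence $\overline{S}$ becomes equivariant once the factor $|\det|^{k/2}$ is absorbed into the $|\det|^{-k/2}$-twist carried by $\Kr(\pi,\psi)^{(k)}$, and right-smoothness of $S(W)$ is inherited from that of $W$ via \Cref{T:Kirillow}.

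For (ii) and (iii) the key point is that $S(W)$ is nothing but the realization in the Whittaker space of the class $q(W)\in\Kr(\pi,\psi)^{(k)}$, where $q\colon\Kr(\pi,\psi)\twoheadrightarrow\Kr(\pi,\psi)^{(k)}$ is the natural surjection: the normalization $|z|^{(k-n)/2}\chi^{-1}(z)$ in \eqref{E:lim} is exactly the one that extracts, from the asymptotic expansion, the coefficient living on the graded piece of the Bernstein-Zelevinsky filtration responsible for the $k$-th derivative, all lower-order terms dying in the limit. Since $\ker(\sigma\twoheadrightarrow\tau)=\sigma\cap\ker q$, this gives (ii) immediately; and since the resulting map $\Kr(\pi,\psi)^{(k)}\to\Wc_{n-k}(\psi)$ is injective on $\tau$ — its restriction to $P_{n-k}$ being the Kirillov model of $\tau$, which is faithful by \Cref{T:Kirillow} and the results cited there — we get $\ker S=\sigma\cap\ker q=\ker(\sigma\twoheadrightarrow\tau)$, hence (iii). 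Finally (iv): $\tau\neq 0$ and $q$ restricts to a surjection $\sigma\twoheadrightarrow\tau$, so some $W\in\sigma$ has non-vanishing $k$-th asymptotic coefficient, i.e. $S(W)\neq 0$. I expect the main obstacle to be exactly the identification underlying (ii)--(iii), namely that the $k$-normalized limit \eqref{E:lim} computes $q(W)$ and nothing coarser; this is the technical heart of \cite[Proposition 1.7]{CogPia}, requiring the precise bookkeeping of the asymptotic exponents attached to the successive quotients of the Bernstein-Zelevinsky filtration, and it is here that the central-character hypothesis on $\tau$ is used — it is what makes the twist $\chi^{-1}(z)$, and hence the limit, well posed.
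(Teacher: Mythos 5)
Correct, and essentially the paper's approach: the paper gives no independent argument for this proposition but simply quotes \cite[Proposition 1.7]{CogPia} and \cite[Corollary 2.1]{MatIII}, relying (as it does for the neighbouring statements) on the fact that the Bernstein--Zelevinsky--theoretic proof is insensitive to the coefficient field, and your sketch reconstructs exactly that argument. One caution: injectivity should not be attributed to \Cref{T:Kirillow} applied to ``the Kirillov model of $\tau$'' --- $\tau$ need not be of Whittaker type, and the faithfulness of the limit realization is precisely the content of the cited results --- but since you explicitly defer this technical heart to \cite{CogPia} and \cite{MatIII}, your route coincides with the paper's.
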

Whittaker models are a useful place to look for integral structures, as the next theorem shows.
\begin{theorem}[{\cite[Theorem 2]{Vign}}]\label{T:Whitint}
    For $\pi$ an integral representation of Whittaker type over $\ql$ of $G_n$, set
\[\Wc^{en}(\pi,\psi)=\{W\in \Wc(\pi,\psi):\, W(G_n)\subseteq \zl\}.\]
    If $\pi\in \Rep$ is integral and of Whittaker type, then $\Wc^{en}(\pi,\psi)$ is an integral structure of $\Wc(\pi,\psi)$.
\end{theorem}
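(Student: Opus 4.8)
The statement to prove is the last displayed theorem in the excerpt: if $\pi$ is an integral representation of Whittaker type over $\ql$ of $G_n$, then $\Wc^{en}(\pi,\psi)$, the space of Whittaker functions in $\Wc(\pi,\psi)$ taking values in $\zl$, is an integral structure of $\Wc(\pi,\psi)$. Recall this means two things: (a) $\Wc^{en}(\pi,\psi)$ is a $\zl$-lattice in $\Wc(\pi,\psi)$, i.e.\ it is a $\zl$-submodule which is free (or at least torsion-free and of the right rank locally), and (b) it generates $\Wc(\pi,\psi)$ over $\ql$, i.e.\ the natural map $\Wc^{en}(\pi,\psi)\otimes_\zl\ql\to\Wc(\pi,\psi)$ is an isomorphism. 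Note $\Wc^{en}(\pi,\psi)$ is manifestly $G_n$-stable, since the $G_n$-action is by right translation and $\zl$-valuedness is preserved.

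The plan is as follows. First I would fix an arbitrary integral structure $\mathfrak{l}\subseteq V$ of $(\pi,V)$, which exists by hypothesis. Composing the (unique up to scalar) nonzero map $\pi\to\Wc(\psi)$ with restriction gives a $G_n$-equivariant embedding realizing $\Wc(\pi,\psi)$ as a quotient of $\pi$; transporting $\mathfrak{l}$ along this map produces an integral structure $\mathfrak{l}'$ of $\Wc(\pi,\psi)$ (the image of an integral structure under a quotient is again an integral structure, as recalled in the subsection on integral structures). The content of the theorem is then that $\mathfrak{l}'$ can be taken to be exactly $\Wc^{en}(\pi,\psi)$, i.e.\ that the ``obvious'' integral structure coincides with $\zl$-valued Whittaker functions. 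The key step is to show $\mathfrak{l}'\subseteq \Wc^{en}(\pi,\psi)$ after a suitable normalization, and conversely that $\Wc^{en}(\pi,\psi)$ is not too large. For the first inclusion: by admissibility, $\Wc(\pi,\psi)^{K}$ is finite-dimensional for each open compact $K\subseteq G_n$, and $\mathfrak{l}'^{\,K}=\mathfrak{l}'\cap\Wc(\pi,\psi)^K$ is a finitely generated $\zl$-module spanning $\Wc(\pi,\psi)^K$; evaluating at finitely many coset representatives $g\in G_n/K$ shows each $W\in\mathfrak{l}'$ has image in a finitely generated $\zl$-submodule of $R$, hence (after clearing a common denominator if necessary) takes values in $\zl$. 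For the reverse, one uses the Kirillov model: by \Cref{T:Kirillow} the restriction map $W\mapsto W|_{P_n}$ is injective on $\Wc(\pi,\psi)$, so $\Wc^{en}(\pi,\psi)$ embeds into $\zl$-valued functions on $P_n$, and one must check this $\zl$-module is finitely generated, i.e.\ spanned by values at finitely many points modulo $N_n$, which again follows from admissibility of $\Wc(\pi,\psi)$ restricted to a suitable open compact.

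Concretely, the cleanest route avoiding the circularity of the above is probably: (i) produce \emph{some} integral structure $\mathfrak{l}'$ of $\Wc(\pi,\psi)$ with $\mathfrak{l}'\subseteq\Wc^{en}(\pi,\psi)$ — this is the normalization/rescaling argument, choosing a Whittaker newvector-type element and scaling so its values lie in $\zl$ and arguing $\zl$-span is $G_n$-stable; (ii) conclude $\Wc^{en}(\pi,\psi)\otimes_\zl\ql=\Wc(\pi,\psi)$ immediately from $\mathfrak{l}'\otimes_\zl\ql=\Wc(\pi,\psi)$ and $\mathfrak{l}'\subseteq\Wc^{en}(\pi,\psi)$; (iii) show $\Wc^{en}(\pi,\psi)$ is a lattice, i.e.\ $\zl$-finitely-generated after intersecting with each $K$-fixed subspace. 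For (iii) the point is that $\Wc^{en}(\pi,\psi)^K=\{W\in\Wc(\pi,\psi)^K: W(G_n)\subseteq\zl\}$, and since $\Wc(\pi,\psi)^K$ is finite-dimensional over $\ql$ with a basis each of whose members is determined by its values on a \emph{finite} set $F\subseteq G_n$ of coset representatives (here one uses that Whittaker functions supported appropriately separate points, or the Kirillov-model injectivity plus compact-open support mod $N_n$), evaluation at $F$ embeds $\Wc^{en}(\pi,\psi)^K$ as a $\zl$-submodule of $\zl^{F}$, which is finitely generated since $\zl$ is noetherian; that it spans $\Wc(\pi,\psi)^K$ over $\ql$ follows from (i)–(ii).

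The main obstacle I anticipate is step (iii), specifically producing the \emph{finite} evaluation set $F$ that controls $K$-fixed Whittaker functions: a priori a Whittaker function is determined by its values on all of $N_n\backslash G_n$, which is not compact. The resolution is that for $W\in\Wc(\pi,\psi)^K$ the function $W$ on $P_n$ (equivalently the Kirillov model) is not merely smooth but has support, modulo $N_n$, contained in a fixed compact set independent of $W$ once $K$ is fixed — this is a standard consequence of the theory of derivatives and the fact that $\pi$ has finite length, so that $\pi^{(k)}=0$ for $k$ large; combined with \Cref{L:phirest} and \Cref{L:limit} describing the filtration of the Kirillov model, one gets a uniform bound on the support. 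I would cite \cite[Theorem 2]{Vign} for the full argument but sketch the support-finiteness and the noetherian descent as above; this is exactly the content of Vignéras's theorem, so strictly one may simply attribute it, but the sketch I have outlined is the honest proof.
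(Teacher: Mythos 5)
The paper offers no proof of this statement at all: it is quoted directly from Vignéras (the cited Theorem 2 of \cite{Vign}), so the only honest internal ``proof'' is the attribution. Your sketch correctly identifies what has to be shown, but as written it would not stand on its own, and the gaps sit exactly at the points that make Vignéras's theorem nontrivial. First, the containment of a rescaled integral structure $\mathfrak{l}'$ in $\Wc^{en}(\pi,\psi)$ cannot be obtained by ``evaluating at finitely many coset representatives'': a $K$-fixed Whittaker function is \emph{determined} by finitely many values inside the finite-dimensional space $\Wc(\pi,\psi)^K$, but its set of values runs over the infinite set $N_n\backslash G_n/K$, and its support modulo $N_n$ is not compact (already for an unramified principal series the spherical Whittaker function is nonzero on an infinite cone in the torus, with values essentially Schur polynomials in the Satake parameters --- their integrality is the content of the theorem, not a consequence of finitely many evaluations). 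The argument that works, and the one behind the citation, is to normalize the Whittaker functional $\lambda$ so that it is $\zl$-valued and nonzero modulo $\Lambda$ on a $G_n$-stable lattice $\mathfrak{l}$; then $W_v(g)=\lambda(\pi(g)v)\in\zl$ for all $v\in\mathfrak{l}$ simply because $\pi(g)v\in\mathfrak{l}$. That this normalization exists is itself the hard step: one needs that the image of $\mathfrak{l}$ under the top-derivative (mirabolic coinvariants) functor is a lattice in a finite-dimensional space, and this is where the Kirillov/derivative machinery genuinely enters, not merely as a support bound.

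Two further steps of your plan fail as stated. The uniform compact support modulo $N_n$ that you invoke for $K$-fixed vectors in step (iii) is false in general; compact support of the restriction to $P_n$ holds for cuspidal representations, and in general one only controls the support in the directions forced by $\psi$-equivariance and smoothness. And the ``noetherian descent'' is not available: $\zl$ is a non-noetherian valuation ring (its maximal ideal $\Lambda$ is not finitely generated), so a $\zl$-submodule of $\zl^{F}$ need not be finitely generated; moreover the notion of integral structure used in the paper requires a lattice generating the space, so beyond boundedness one still needs the commensurability argument of Vignéras to conclude that $\Wc^{en}(\pi,\psi)$ is an integral structure rather than merely a bounded generating submodule. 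Since the paper itself only cites \cite[Theorem 2]{Vign}, deferring to that reference is entirely appropriate; but the sketch you give would need to be replaced by (or reduced to) Vignéras's actual argument rather than presented as a proof.
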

We denote in this case
\[\ol{\Wc(\pi,\psi)}\coloneq \Wc^{en}(\pi,\psi)\otimes_\zl\fl.\]
\begin{lemma}\label{L:instrucwhit}
    Let $\pi_1,\ldots,\pi_k$ be integral representations of Whittaker type. Then the non-zero map
    \[\Wc(\pi_1,\psi)\times\ldots\times\Wc(\pi_n,\psi)\ra \Wc(\pi_1\times\ldots\times\pi_k,\psi)\] respects the integral structures and hence induces a non-zero map
    \[\ol{\Wc(\pi_1,\psi)}\times\ldots\times\ol{\Wc(\pi_n,\psi)}\ra \ol{\Wc(\pi_1\times\ldots\times\pi_k,\psi)}.\]
\end{lemma}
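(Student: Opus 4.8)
\emph{Approach.} The plan is to reduce to the case $k=2$ using associativity, and there to realise the canonical map by an explicit Jacquet‑type integral which visibly preserves $\zl$‑integrality. Bracketing $\pi_1\times\cdots\times\pi_k=(\pi_1\times\cdots\times\pi_{k-1})\times\pi_k$ and using induction in stages for $\Ind$ together with the uniqueness of the Whittaker model, the $k$‑fold map factors as $\Ind_{P}^{G_n}$ of the $(k-1)$‑fold map tensored (over $\zl$ on integral structures) with the identity on $\Wc(\pi_k,\psi)$, followed by a two‑fold map. Since $\Ind_{P}^{G_n}$ carries an integral structure to an integral structure and commutes with $-\otimes_\zl\fl$ (recalled above), it suffices to treat $k=2$.

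\emph{Setup for $k=2$.} Fix integral $\pi_1\in\Rep_{n_1}$, $\pi_2\in\Rep_{n_2}$ of Whittaker type, put $n=n_1+n_2$, $\alpha=(n_1,n_2)$. The source $\Wc(\pi_1,\psi)\times\Wc(\pi_2,\psi)=\Ind_{P_\alpha}^{G_n}\big(\Wc^{en}(\pi_1,\psi)\otimes_\zl\Wc^{en}(\pi_2,\psi)\big)\otimes_\zl\ql$ carries the integral structure $\mathfrak{L}:=\Ind_{P_\alpha}^{G_n}\big(\Wc^{en}(\pi_1,\psi)\otimes_\zl\Wc^{en}(\pi_2,\psi)\big)$; this is a genuine $\zl$‑lattice because $\delta_{P_\alpha}^{1/2}$ takes values in $q^{\frac{1}{2}\ZZ}\subseteq\zl^\times$ (as $q^{\frac{1}{2}}\in\zl^\times$), and $\Wc^{en}(\pi_i,\psi)$ is a $G_{n_i}$‑stable lattice. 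The target carries $\Wc^{en}(\pi_1\times\pi_2,\psi)$. We must therefore show: for $f\in\mathfrak{L}$ the associated Whittaker function $W_f\in\Wc(\pi_1\times\pi_2,\psi)$ is $\zl$‑valued.

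\emph{The integral.} The canonical non‑zero map sends $f$ to $W_f$, where $W_f(g)=\int_{U}\langle f(\eta u g)\rangle\,\psi(u)^{-1}\,du$ for a fixed Weyl element $\eta$, a certain unipotent subgroup $U\subseteq G_n$, and $\langle W_1\otimes W_2\rangle=W_1(1_{n_1})W_2(1_{n_2})$ the product of the Whittaker functionals of $\pi_1,\pi_2$. By the very definition of $\Wc^{en}$ the functional $\langle-\rangle$ is $\zl$‑valued on $\Wc^{en}(\pi_1,\psi)\otimes_\zl\Wc^{en}(\pi_2,\psi)$, and $\psi$ is $\zl$‑valued by hypothesis; since $f(\eta u g)\in\Wc^{en}(\pi_1,\psi)\otimes_\zl\Wc^{en}(\pi_2,\psi)$ for all $u$ when $f\in\mathfrak{L}$, the integrand $u\mapsto\langle f(\eta u g)\rangle\psi(u)^{-1}$ is $\zl$‑valued and locally constant. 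Although $U$ is not compact, the Jacquet integral stabilises: there is a compact open subgroup $U'\subseteq U$ with $W_f(g)=\int_{U'}\langle f(\eta u g)\rangle\psi(u)^{-1}\,du$ (this is the standard truncation of the Jacquet integral, \emph{cf.} \cite{BerZel}, \cite{CogPia}; by \Cref{T:Kirillow} it even suffices to treat $g\in P_n$, where the truncation is most transparent). Writing the $\zl$‑valued locally constant integrand on $U'$ as a finite sum of multiples of indicator functions of cosets $C_j$ of a small compact open subgroup, we get $W_f(g)=\sum_j a_j\,\mathrm{vol}(C_j)$ with $a_j\in\zl$ and $\mathrm{vol}(C_j)$ a power of $q$, hence a unit of $\zl$ once the Haar measure is normalised to give some compact open subgroup volume $1$. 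Thus $W_f(g)\in\zl$ for every $g\in G_n$.

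\emph{Conclusion and obstacle.} Hence the canonical map carries $\mathfrak{L}$ into $\Wc^{en}(\pi_1\times\pi_2,\psi)$; reducing mod $\ell$ and using the compatibility of $\Ind_{P_\alpha}^{G_n}$ with $-\otimes_\zl\fl$ gives the map $\ol{\Wc(\pi_1)}\times\ol{\Wc(\pi_2)}\to\ol{\Wc(\pi_1\times\pi_2)}$, and iterating settles general $k$. It is non‑zero: the canonical map is unique only up to a scalar, and after rescaling we may assume the image of $\mathfrak{L}$ is not contained in $\ell\,\Wc^{en}(\pi_1\times\pi_2,\psi)$ — concretely, any $f\in\mathfrak{L}$ supported on a small neighbourhood of the base point of the relevant Bruhat cell with $\langle f(\eta)\rangle\in\zl^\times$ already gives $W_f(1)\in\zl^\times$. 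The only genuinely non‑formal input, and thus the main obstacle, is the stabilisation statement in the previous paragraph: that the Jacquet‑type integral computing the Whittaker functional on an induced representation reduces to an integral over a compact open subgroup, i.e. to a finite $\zl$‑linear combination of $\zl$‑valued terms. Over $\CC$ this integral is merely a regularised object, so the argument has to be run $p$‑adically, where it is literally finite, with care that the truncated integral sees only volumes of compact open subgroups (units of $\zl$) and $\zl$‑valued evaluations of $f$. Everything else — the reduction to $k=2$, the behaviour of integral structures under $\Ind$ and under reduction, and non‑zeroness — is formal given the recollections above and \Cref{T:Kirillow}.
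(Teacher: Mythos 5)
Your proposal is essentially the paper's own proof: the paper establishes this lemma simply by citing the proof of \cite[Theorem 2.26]{KurMat}, which runs exactly along the lines you sketch — realize the map $\Wc(\pi_1)\times\cdots\times\Wc(\pi_k)\ra\Wc(\pi_1\times\cdots\times\pi_k)$ by the Jacquet-type integral, use its $p$-adic stabilization to see it is a finite $\zl$-linear combination of $\zl$-valued terms (volumes being powers of $q$, hence units since $\ell\nmid q$), and get non-vanishing from a function supported near the open cell. The stabilization step you single out as the only non-formal input is precisely what the cited reference supplies, so your argument matches the intended one.
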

\begin{proof}
    See for example the proof of \cite[Theorem 2.26]{KurMat}.
\end{proof}
We recall the following useful properties of Whittaker models.
\begin{prop}[{\cite[Proposition 3.7]{KurMatIII}}]\label{L:derind}
Let $\pi_1\in \Rep_{n_1},\, \pi_2\in \Rep_{n_2}$
    \[\Wc(\pi_2,\psi)\subseteq \Wc(\pi_1\times \pi_2,\psi)^{(n_1)}.\]
\end{prop}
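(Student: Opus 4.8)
The plan is to make both sides of the asserted inclusion into explicit subspaces of $\Wc_{n_2}(\psi)$ and to produce a nonzero $G_{n_2}$-equivariant map between them. As is implicit in the notation, one assumes $\pi_1$ and $\pi_2$ to be of Whittaker type, so that $\pi_1\times\pi_2$ is of Whittaker type and, by \Cref{T:Kirillow}, the $(n_1)$-th derivative $(\pi_1\times\pi_2)^{(n_1)}\coloneq\Kr(\pi_1\times\pi_2,\psi)^{(n_1)}=\Psi^-(\Phi^-)^{n_1-1}\Kr(\pi_1\times\pi_2,\psi)$ is a well-defined $G_{n_2}$-representation; via the limit map $\overline S$ of \Cref{L:limit} (applied central-character-eigenspace by central-character-eigenspace) it embeds into $\Wc_{n_2}(\psi)$, and this image is what $\Wc(\pi_1\times\pi_2)^{(n_1)}$ denotes. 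Since $\Wc(\pi_2,\psi)$ is socle-irreducible and $\pi_2$ is of Whittaker type, it suffices to construct \emph{any} nonzero $G_{n_2}$-map $\pi_2\to \Wc(\pi_1\times\pi_2)^{(n_1)}\subseteq\Wc_{n_2}(\psi)$: by uniqueness of the Whittaker functional on $\pi_2$, its image is then forced to be $\Wc(\pi_2,\psi)$.

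For the structural input I would use the Bernstein--Zelevinsky description of the derivative of a parabolic induction: $(\pi_1\times\pi_2)^{(n_1)}$ carries a filtration with graded pieces $\pi_1^{(a)}\times\pi_2^{(b)}$, $a+b=n_1$, and at the relevant end sits the piece $a=n_1$, namely $\pi_1^{(n_1)}\times\pi_2^{(0)}$. Because $\pi_1$ is of Whittaker type its top derivative $\pi_1^{(n_1)}$ is one-dimensional — this is again just uniqueness of Whittaker functionals — so that piece is isomorphic to $\mathbf 1\times\pi_2=\pi_2$ as a $G_{n_2}$-representation; the same holds with $\pi_1\times\pi_2$ replaced by its Whittaker model since the four derivative functors are exact and pass to quotients. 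Concretely, one can instead argue via \Cref{L:phirest}: restricting $W\in\Wc(\pi_1\times\pi_2,\psi)$ to $\begin{pmatrix}g&\\&1_{n_1}\end{pmatrix}$, $g\in G_{n_2}$, already yields (after the normalizing $\lvert\det\rvert$-twist) a function transforming under $N_{n_2}$ by $\psi$, hence an element of $\Wc_{n_2}(\psi)$; using the surjection $\Wc(\pi_1,\psi)\times\Wc(\pi_2,\psi)\twoheadrightarrow\Wc(\pi_1\times\pi_2,\psi)$ and taking $W$ in the image of a pure tensor $W_1\times W_2$ with $W_1(1_{n_1})\neq 0$, the explicit intertwining integral shows that the leading asymptotic term extracted by the limit \eqref{E:lim} is a nonzero multiple of $W_2$.

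I expect the genuine obstacle to be precisely this compatibility step: one must verify that the limit procedure $\overline S$ of \Cref{L:limit}, applied to $(\pi_1\times\pi_2)^{(n_1)}$, realizes the $\pi_2$-piece identified above as the \emph{honest} Whittaker model $\Wc(\pi_2,\psi)\subseteq\Wc_{n_2}(\psi)$ and not merely an abstractly isomorphic copy. This forces one to track all the normalizing $\lvert\det\rvert$- and $\lvert z\rvert$-powers (and the $\chi^{-1}(z)$ twist) through $(\Phi^-)^{n_1-1}$, $\Psi^-$ and \eqref{E:lim}, to confirm that this graded piece contributes a genuine $G_{n_2}$-subobject to the image of $\overline S$ (rather than only a subquotient), and to handle the fact that \Cref{L:limit} requires a fixed central character, so that one first decomposes $\Wc(\pi_2,\psi)$ along its central-character generalized eigenspaces and then glues the resulting maps back together. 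Once this bookkeeping is done, the map $\pi_2\to\Wc(\pi_1\times\pi_2)^{(n_1)}$ is nonzero and the conclusion $\Wc(\pi_2,\psi)\subseteq\Wc(\pi_1\times\pi_2)^{(n_1)}$ follows.
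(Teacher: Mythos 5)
The paper does not actually prove this proposition: it is imported from \cite[Proposition 3.7]{KurMatIII}, with only the remark that the argument there, written for $R=\CC$, transfers to the present coefficient fields. Your sketch follows essentially the same route as that reference — realizing the $n_1$-th derivative on Kirillov/Whittaker functions as in \Cref{L:phirest}, extracting the $\pi_2$-piece via the limit map of \Cref{L:limit}, and using uniqueness of the Whittaker functional on $\pi_2$ to identify the image with $\Wc(\pi_2,\psi)$ — so it is consistent with the intended proof. The one step I would press you on is the final assertion: for the image of an arbitrary pure tensor $W_1\times W_2$ with $W_1(1_{n_1})\neq 0$, the leading term of $W\begin{pmatrix} zg&\\&1_{n_1}\end{pmatrix}$ need not be a multiple of $W_2$ alone, since other pieces of the Bernstein--Zelevinsky filtration can contribute the same central exponent; the cited proof controls this by choosing inducing sections supported near the open cell, and it is exactly this computation (rather than the filtration formalism, whose sub-versus-quotient direction you assert but do not verify) that shows the $\pi_1^{(n_1)}\times\pi_2$ piece occurs as a genuine subobject realized by $\overline{S}$.
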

\begin{theorem}[{\cite[Theorem 3.10]{KurMatIII}}]\label{T:Matringe}
    Let $n\ge 2$ and $\tau$ be an a submodule of $\ind_{N_n}^{P_n}\psi$. If for $\ain{k}{1}{n-1}$, $\tau^{(k)}$ admits a central character, for any $W_0\in \Wc(\tau^{(k)},\psi)$ and $\phi\in C_c^\infty(\Ff^{n-k})$ there exists $W\in \tau$ such that for all $g\in G_{n-k}$ \[W\begin{pmatrix}
        g&\\&1_{k}
    \end{pmatrix}=W_0(g)\phi(\epsilon_{n-k}g)\lvert\det(g)\lvert^{\frac{k-1}{2}}.\]
\end{theorem}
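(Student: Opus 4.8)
The plan is to prove this by descending induction on $k$, starting from $k = n-1$ and building up the required Whittaker function from its restrictions to the mirabolic-type subgroups $P_{n-k}$ embedded in the corner. The base case $k = n-1$ is where the central character hypothesis does most of the work: there $\tau^{(n-1)}$ is a representation of $G_1 = \Ff^\times$ with a central character $\chi$, and $\Wc(\tau^{(n-1)})$ is one-dimensional, so the assertion reduces to constructing, for a fixed $W_0$ generating this line and an arbitrary $\phi \in C_c^\infty(\Ff)$, an element $W \in \tau$ with $W\begin{pmatrix} g & \\ & 1_{n-1}\end{pmatrix} = W_0(g)\phi(\epsilon_1 g)\lvert\det g\lvert^{(n-2)/2}$ for $g \in G_1$. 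This is essentially the classical statement that the Kirillov model of a generic (or Whittaker-type) representation of $\gl_n$ contains $C_c^\infty((\Ff^\times)^{?})$-many functions in its ``bottom piece''; I would extract it from \Cref{T:Kirillow}, \Cref{L:phirest}, and \Cref{L:limit}, which together identify $\Psi^-(\Phi^-)^{n-1}$ applied to the Kirillov model and give the injectivity of $\overline{S}$.

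For the inductive step, suppose the statement holds for $k+1$ and we want it for $k$. Given $W_0 \in \Wc(\tau^{(k)})$ and $\phi \in C_c^\infty(\Ff^{n-k})$, the idea is to first descend $W_0$ one more derivative: relate $\Wc(\tau^{(k)})$ to $\Wc(\tau^{(k+1)})$ via $\Phi^-$ and $\Psi^-$, using \Cref{L:phirest} to realize $(\Phi^-)^{k+1}(\Kr(\pi,\psi))$ concretely as functions $p \mapsto \lvert\det p\lvert^{-(k+1)/2} W\begin{pmatrix} p & \\ & 1_{k+1}\end{pmatrix}$, and apply the induction hypothesis to produce functions in $\tau$ whose restriction to $P_{n-k-1}$ is prescribed. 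Then one must ``integrate back up'' one step: reconstruct the value of $W$ on $P_{n-k}$ from its $\Phi^-$-image together with a free choice in the last coordinate direction, which is exactly where the extra factor $\phi(\epsilon_{n-k} g)$ and the shift in the power of $\lvert\det\lvert$ enter. The mechanism for this is the exactness and explicit description of $\Phi^+ \Phi^-$ versus $\Psi^+\Psi^-$ on $\Rep(P_n)$ (the Bernstein–Zelevinsky filtration of $\ind_{N_n}^{P_n}\psi$ restricted to $P_{n-k}$), combined with the fact that $\Phi^+$ builds representations induced from $P_{n-k-1}N_{n-k-1}$, so adding a compactly supported function in the new coordinate is unobstructed. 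The central character hypothesis on each $\tau^{(j)}$, $1 \le j \le n-1$, is what guarantees at every stage that the relevant limit \eqref{E:lim} stabilizes and that $\overline{S}$ is defined, so the induction does not break.

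I expect the main obstacle to be the bookkeeping in the inductive step: precisely matching the normalizing powers of $\lvert\det\lvert$ (the $\frac{1}{2}$-twists built into $\Psi^{\pm}, \Phi^{\pm}$ accumulate, and one needs the final exponent to come out as $\frac{k-1}{2}$), and verifying that the function produced by combining the induction hypothesis with an arbitrary $\phi \in C_c^\infty(\Ff^{n-k})$ genuinely lies in $\tau$ and not merely in the ambient $\ind_{N_n}^{P_n}\psi$ — i.e.\ that the $\Phi^+/\Psi^+$ reconstruction stays inside the submodule $\tau$. This last point is handled because $\tau$ is $P_n$-stable and the reconstructed function is obtained by right-translating and averaging elements already known to lie in $\tau$ against $C_c^\infty$-functions, but making the averaging precise (a convolution in the added coordinate, exploiting that $\Phi^+ = \Ind_{P_{n-1}N_{n-1}}^{P_n}(- \otimes \psi)$) is the delicate part. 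Once the $k$-th step is in place, specializing $\phi$ appropriately and invoking the uniqueness of the Whittaker model to pin down $W_0$ up to scalar completes the argument.
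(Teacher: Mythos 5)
The paper does not actually prove this statement: it quotes it verbatim from \cite[Corollary 3.9]{KurMatIII} and only remarks, immediately below, that the proof given there for $R=\CC$ generalizes to the coefficient fields used here. So the only meaningful comparison is with the strategy of that cited proof, and your sketch does identify its standard ingredients correctly: the Bernstein--Zelevinsky functors $\Phi^{\pm},\Psi^{\pm}$, the concrete realization of $(\Phi^-)^k$ on the Kirillov model as in \Cref{L:phirest}, the limit map \eqref{E:lim} of \Cref{L:limit} (this is the only place the central character hypothesis is used), and an averaging/Fourier argument over the added unipotent coordinates to produce the cutoff $\phi$.

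However, as written your proposal has a genuine gap precisely at the point that constitutes the content of the theorem. In the inductive step you propose to ``reconstruct the value of $W$ on $P_{n-k}$ from its $\Phi^-$-image together with a free choice in the last coordinate direction''; this begs the question. The restriction of $W$ to the corner copy of $G_{n-k}$ is not determined by its $\Phi^-$-data up to a free choice: the kernel of the quotient $\tau_{(k)}\twoheadrightarrow \Psi^+\Psi^-\tau_{(k)}$ is the whole sub $\Phi^+\Phi^-\tau_{(k)}$, and what must be \emph{proved} is (i) that any preimage of $W_0$ under the derivative map has corner restriction equal to $W_0(g)\lvert\det g\lvert^{\frac{k-1}{2}}$ times a function of $\epsilon_{n-k}g$ that is constant near $0$ (this is where \eqref{E:lim} and the stabilization of the limit enter), and (ii) that the sub-piece $\Phi^+\Phi^-\tau_{(k)}$, via the induced-representation description of $\Phi^+$ and averaging right translates by the abelian unipotent group of last columns against characters, supplies \emph{inside $\tau$} all corner restrictions of the form $W_0(g)\phi_0(\epsilon_{n-k}g)\lvert\det g\lvert^{\frac{k-1}{2}}$ with $\phi_0(0)=0$; the general $\phi$ is then obtained by combining (i) and (ii). You name both mechanisms but carry out neither: the convolution argument in (ii) is exactly what you defer as ``the delicate part'', and without it the claim that the constructed function lies in $\tau$ (rather than merely in $\ind_{N_n}^{P_n}\psi$) and realizes an \emph{arbitrary} $\phi$ is unestablished. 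A smaller inaccuracy: in the base case $\Wc(\tau^{(n-1)})$ need not be literally one-dimensional ($\tau^{(n-1)}$ is $\chi$-isotypic, not necessarily irreducible), though this does not affect the structure of the argument. To turn the sketch into a proof you would essentially have to reproduce the argument of \cite[Corollary 3.9]{KurMatIII} (going back to Jacquet--Piatetski-Shapiro--Shalika and Cogdell--Piatetski-Shapiro), checking along the way that nothing uses $R=\CC$ beyond what \Cref{T:Kirillow}, \Cref{L:phirest} and \Cref{L:limit} already provide.
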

The proofs of these two results are stated only for $R=\CC$, however their methods readily generalize to more general settings presented here.
\subsection{ Rankin Selberg L-factors}
We now recall the construction of Rankin Selberg L-factors as presented in \cite{KurMat}. For complex representations this goes back to the classical work of \cite{JacPiaSha}.
Let $\pi\in \Rep_n$ and $\pi'\in \Rep_m$ be representations of Whittaker type.
\begin{definition}
Let $W\in \Wc(\pi,\psi),\, W'\in \Wc(\pi',\psi^{-1})$ and $k\in \ZZ$.
\begin{enumerate}
    \item The case $n=m$: Let $\phi\in C_c^{\infty}(\Ff^n)$ and define
    \[c_k(W,W',\phi)\coloneq \int_{N_n\backslash G_n^k}W(g)W'(g)\phi(\epsilon_kg)\dd g\]
    and
    \[I(X,W,W,\phi)=\sum_{k\in \ZZ}c_k(W,W',\phi)X^k\in R(X).\]
    \item The case $n<m$: Let $\ain{j}{0}{n-m-1}$ and define
    \[c_k(W,W',j)\coloneq \int_{M_{j,m}}\int_{N_n\backslash G_n^k}W(g)W'\begin{pmatrix}
        g&&\\x&1_j&\\&&1_{n-m-j}
    \end{pmatrix}\dd g\dd x\]and
    \[I(X,W,W,j)=\sum_{k\in \ZZ}c_k(W,W',j)X^k\in R(X).\]
    \item The case $n>m$: Analogous to the case $n<m$.
\end{enumerate}
\end{definition}
Having defined this, we can now recall the definition of the $L$-factors.
\begin{theorem}[{\cite[Theorem 3.5]{KurMat}}]
    Let $\pi\in \Rep_n$ and $\pi'\in \Rep_m$ be representations of Whittaker type. 
    \begin{enumerate}
        \item The case $n=m$. The ideal spanned by $I(X,W,W',\phi)$, where we vary over $W\in \Wc(\pi,\psi),\, W'\in \Wc(\pi',\psi^{-1})$ and $\phi\in C_c^\infty(\Ff^n)$ is fractional and admits a generator $L(X,\pi,\pi')$ with $L(X,\pi,\pi')^{-1}\in R[X]$, which is normalized by demanding $L(0,\pi,\pi')=1$.
        \item The case $n\neq m$. Fix $\ain{j}{0}{\lvert n-m\lvert-1}$. The ideal spanned by $I(X,W,W',j)$, where we vary over $W\in \Wc(\pi,\psi)$ and $ W'\in \Wc(\pi',\psi^{-1})$ is fractional, independent of $j$ and admits a generator $L(X,\pi,\pi')$ with $L(X,\pi,\pi')^{-1}\in R[X]$, which is normalized by demanding $L(0,\pi,\pi')=1$.
    \end{enumerate}
\end{theorem}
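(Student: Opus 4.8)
The plan is to adapt the Jacquet--Piatetski-Shapiro--Shalika proof of rationality of local Rankin--Selberg integrals to the coefficient ring $R$, replacing the two analytic inputs of the classical argument --- absolute convergence of the zeta integral in a half-plane and its meromorphic continuation --- by algebraic substitutes: one shows that each $c_k(W,W',\phi)$ (resp.\ $c_k(W,W',j)$) is a finite quantity and vanishes for $k\ll 0$, so that $I(X,W,W',\phi)$ lies in $R((X))$; and one shows that the $R$-span $\mathcal I(\pi,\pi')$ of all these Laurent series is a nonzero fractional ideal of $R[X^{\pm1}]$, i.e.\ a finitely generated $R[X^{\pm1}]$-submodule of $R(X)$. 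Since $R[X^{\pm1}]$ is a principal ideal domain, such an ideal is principal, and a suitably normalised generator is $L(X,\pi,\pi')$.

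First I would treat the case $n=m$. Writing $g=uak$ in the Iwasawa decomposition $G_n=N_nA_nK_n$, with $A_n$ the diagonal torus, $K_n=\gl_n(\of)$ and $a=\mathrm{diag}(a_1,\dots,a_n)$, the condition $g\in G_n^k$ reads $\prod_i\lvert a_i\rvert=q^{-k}$; the compact support of $\phi$ bounds $\lvert a_n\rvert$ from above (since $\epsilon_n g=a_n(\epsilon_n k)$ with $\epsilon_n k$ primitive), while the smoothness and $(N_n,\psi)$-equivariance of $W$ force $\lvert a_i/a_{i+1}\rvert$ to be bounded from above for each $i$. Together these confine $a$ to a compact subset of $A_n$, so each $c_k$ is a finite integral and, moreover, $c_k=0$ as soon as $k$ falls below an explicit bound depending on $W,W',\phi$; hence $I(X,W,W',\phi)\in R((X))$. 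The span $\mathcal I(\pi,\pi')$ is nonzero: one can choose $W,W'$ whose restrictions to $P_n$ are supported near $1_n$ modulo $N_n$, which exist by \Cref{T:Matringe}, and $\phi$ supported near $\epsilon_n$, so that the coefficient $c_0$ is nonzero. It is also stable under multiplication by $X^{\pm1}$, which one reads off from the behaviour of the zeta integral under translation of $W,W'$ by a suitable diagonal element combined with a rescaling of $\phi$: each such operation shifts the exponent $k$ by a prescribed amount up to a unit of $R$, so $X^{\pm1}\cdot\mathcal I(\pi,\pi')=\mathcal I(\pi,\pi')$.

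The core of the argument is to show that each $I(X,W,W',\phi)$ is a rational function and that $\mathcal I(\pi,\pi')$ is finitely generated over $R[X^{\pm1}]$, together with the sharper statement that a generator has inverse in $R[X]$. For this I would unfold the integral along the mirabolic subgroup, using $P_n\backslash G_n\cong\Ff^n\setminus\{0\}$ via $g\mapsto\epsilon_n g$, so as to express $I(X,W,W',\phi)$ in terms of the restrictions of $W,W'$ to $P_n$ and the Bernstein--Zelevinsky filtration of their Kirillov models by derivatives (\Cref{L:phirest}, \Cref{L:limit}); this produces a recursion passing from $\gl_n\times\gl_n$ to $\gl_n\times\gl_{n-1}$ integrals and, iterating, reduces everything to the rank-one (Tate) case, where rationality, finite generation, and the explicit shape of the generator --- whose inverse is a polynomial with constant term $1$, a property that propagates through the recursion --- are all available. \Cref{T:Matringe} is used throughout to produce enough Whittaker functions both to run the recursion and to identify the ideal. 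The case $n\neq m$ follows by the same unfolding carried out after the additional integration over the block $x$: this reduces $c_k(W,W',j)$ to the $n=m$ situation, and a standard manipulation of the extra unipotent integration --- which amounts to replacing one Whittaker function by one of its derivatives --- shows the resulting fractional ideal is independent of $j$. Finally, $\mathcal I(\pi,\pi')$ being a nonzero fractional ideal over the PID $R[X^{\pm1}]$, it equals $L(X,\pi,\pi')\cdot R[X^{\pm1}]$ for a generator we normalise by $L(0,\pi,\pi')=1$, and the rank-one base case forces $L(X,\pi,\pi')^{-1}\in R[X]$.

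I expect the decisive obstacle to be the unfolding and the induction on the rank, carried out uniformly in $R$: one must produce, via \Cref{T:Matringe} and \Cref{L:limit}, enough explicit Whittaker functions both to establish rationality and to pin down the fractional ideal exactly, without the convergence estimates available when $\mathrm{char}(R)=0$ --- precisely the point at which the modular situation genuinely diverges from the classical one.
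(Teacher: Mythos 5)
This statement is not proved in the paper at all: it is imported verbatim from \cite{KurMat} (Theorem 3.5 there), so there is no internal proof to compare against, and your task was in effect to reconstruct Kurinczuk--Matringe's argument. Your skeleton does match theirs at most points: the Iwasawa-decomposition argument showing each $c_k$ is a finite sum and vanishes for $k\ll 0$ (so the integrals live in $R((X))$), the stability of the span under $X^{\pm 1}$, the nonvanishing of the ideal via Whittaker functions prescribed on the mirabolic (the extension statement you invoke, \Cref{T:Matringe}, is exactly the tool used, and it in fact yields that the ideal contains $1$, which is what forces $L(X,\pi,\pi')^{-1}$ to be a Laurent polynomial), and the final principal-ideal argument over $R[X^{\pm 1}]$.

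The weak point is the step you yourself flag as decisive: rationality and finite generation. Kurinczuk--Matringe do not run a recursion reducing to the rank-one Tate case; they prove, over any of the coefficient fields $R$, an asymptotic expansion of Whittaker functions restricted to the torus in terms of finitely many ``exponents'' (central characters of the graded pieces of the derivative filtration), which makes each $I(X,W,W',\phi)$ a finite combination of geometric-type series with a \emph{uniform} denominator $\prod_i(1-\alpha_iX)^{m_i}$; this gives rationality, $L^{-1}\in R[X]$, and finite generation of the ideal in one stroke. Your proposed recursion through the Bernstein--Zelevinsky filtration cannot by itself pin down the ideal: the relation between the integrals for $(\pi,\pi')$ and those of its derivatives is in general only a containment/divisibility (compare \Cref{L:inclusion}), because of the ``exceptional'' contributions detected by $\phi(0)\neq 0$ in the $n=m$ case, so the claim that the shape of the generator ``propagates through the recursion'' is not justified; and even for mere rationality the sum over $k$ (the determinant direction) requires precisely the finiteness of central exponents over $R$, which your sketch never establishes. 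So the global structure is right, but the core analytic-substitute input --- the torus expansion of Whittaker functions with finitely many exponents, valid in characteristic $\ell$ --- is missing and is replaced by a mechanism that, as stated, does not deliver the theorem.
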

In particular $L(X,\pi,\pi')=L(X,\pi',\pi)$.
As usual, these $L$-factors satisfy a functional equation, giving rise to an $\epsilon$-factor. For $\phi \in C_c^\infty(\Ff^n)$, we denote by $\hat{\phi}$ its Fourier-transform with respect to the character $\psi$.
\begin{theorem}[{\cite[Corollary 3.11]{KurMat}, \cite[Lemma 3.12]{KurMat}}]
    Let $\pi\in \Rep_n,\pi'\in \Rep_m$ be two representations of Whittaker type with central characters $c_\pi$ and $c_{\pi'}$ and $n\le m$. Let $W\in \Wc(\pi,\psi),\, W'\in \Wc(\pi',\psi^{-1}),$ $\phi\in C_c^\infty(\Ff^n)$ and $\ain{j}{0}{m-n -1}$.
Then there exists $\epsilon(X,\pi,\pi',\psi)\in R[X,X^{-1}]^\times$ such that the following holds.
\begin{enumerate}
    \item The case $n=m$.
    \[\frac{I(q^{-1}X^{-1},\Tilde{W}, \Tilde{W'},\hat{\phi})}{L(q^{-1},\pi\cc,{\pi'}\cc)}=c_{\pi'}(-1)^{n-1}\epsilon(X,\pi,\pi',\psi)\frac{I(X,W,W',\phi)}{L(X,\pi,\pi')}.\]
    \item The case $n<m$.
    \[\frac{I(q^{-1}X^{-1},\rho(w_{n,n-m})\Tilde{W}, \Tilde{W'},m-n-j-1)}{L(q^{-1},{\pi}^\mathfrak{c},{\pi'}^\mathfrak{c})}=\]\[=c_{\pi'}(-1)^{n-1}\epsilon(X,\pi,\pi',\psi)\frac{I(X,W,W',j)}{L(X,\pi,\pi')}.\]
\end{enumerate}
\end{theorem}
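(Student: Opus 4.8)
The plan is to prove first the \emph{unnormalized} functional equation, i.e.\ the existence of a $\gamma$-factor $\gamma(X,\pi,\pi',\psi)\in R(X)$ relating each local integral to its ``dual'', and then to repackage this into the stated normalized form. Fix $\pi\in\Rep_n$, $\pi'\in\Rep_m$ of Whittaker type with $n\le m$, write $\tilde{W}(g)=W(w_n\,{}^tg^{-1})$, and let $\hat{\phi}$ be the $\psi$-Fourier transform of $\phi$ on $\Ff^n$, suitably normalized. The ``dual'' of $I(X,W,W',\phi)$ is $I(q^{-1}X^{-1},\tilde{W},\tilde{W'},\hat{\phi})$ when $n=m$, and $I(q^{-1}X^{-1},\rho(w_{n,n-m})\tilde{W},\tilde{W'},m-n-j-1)$ when $n<m$; note that by \Cref{T:Kirillow} both $I$ and its dual depend only on the restrictions of the Whittaker functions to the mirabolic, hence factor through the Kirillov models. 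First I would observe that both $(W,W',\phi)\mapsto I(X,W,W',\phi)$ and its dual lie in one and the same space of $(P_n\times P_n)$-quasi-invariant, $R(X)$-valued (tri)linear forms on the Kirillov models, and then show that this space is one-dimensional over $R(X)$. Over $\ql$ this is the Jacquet--Piatetski-Shapiro--Shalika argument carried over verbatim with $R(X)$-coefficients (the $\gl_1$ base case plus a Bessel-function computation); for $R=\fl$ I would deduce it from the $\ql$-case by reduction modulo $\ell$, using that $\Wc^{en}$ is an integral structure (Vign\'eras' theorem recalled above) and that the relevant forms reduce to ones of the same type --- the point being that for a fixed $k$ the integrand of $c_k$ is compactly supported modulo $N_n$, so $c_k$ is a finite sum and is $\zl$-valued on $\zl$-valued data. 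This produces $\gamma(X,\pi,\pi',\psi)\in R(X)$ with $(\text{dual integral})=\gamma\cdot I$ for all test data. For $n<m$ the one-dimensionality I would instead reduce to the corank-$0$ case by peeling off the Bernstein--Zelevinsky filtration, using \Cref{L:derind} together with the description of $(\Phi^-)^k$ in \Cref{L:phirest} to collapse the corank-$(m-n)$ integral to a corank-$0$ one.

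Next I would check that the integrals do not all vanish, so that $\gamma$ is honestly an element of $R(X)^\times$ rather than $0$. Here I would invoke \Cref{T:Matringe}: it produces Whittaker functions with prescribed restriction supported near the identity, so that for suitable $W,W',\phi$ the integral $I(X,W,W',\phi)$ is a nonzero Laurent monomial (a single $c_k\ne 0$); the same on the dual side shows the dual integral is nonzero for suitable data, whence $\gamma\in R(X)^\times$.

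Finally I would pass from $\gamma$ to $\epsilon$. By the definition of $L(X,\pi,\pi')$ as the generator of the fractional ideal spanned by the $I(X,W,W',\phi)$ (resp.\ the $I(X,W,W',j)$), the identity $(\text{dual integral})=\gamma\cdot I$ shows that the fractional ideal spanned by the dual integrals equals $\gamma\cdot(L(X,\pi,\pi'))$. On the other hand, applying the change of variable $X\mapsto q^{-1}X^{-1}$ to the defining ideal of the $L$-factor of the contragredient pair identifies this same ideal with $(L(q^{-1}X^{-1},\tilde{\pi},\tilde{\pi'}))$ up to a Laurent-monomial factor. Comparing the two descriptions forces $\gamma(X,\pi,\pi',\psi)\,L(X,\pi,\pi')/L(q^{-1}X^{-1},\tilde{\pi},\tilde{\pi'})$ to be a unit of $R[X,X^{-1}]$; I define $\epsilon(X,\pi,\pi',\psi)$ to be this unit, extracting the sign $c_{\pi'}(-1)^{n-1}$ by tracking the effect of $g\mapsto w_n\,{}^tg^{-1}$ on central characters. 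Substituting back into $(\text{dual integral})=\gamma\cdot I$ and dividing by the appropriate $L$-values yields the two displayed identities.

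The hard part will be the one-dimensionality of the space of quasi-invariant $R(X)$-valued forms when $R=\fl$: the classical exactness and Frobenius-reciprocity inputs do not transfer automatically to the modular setting, so one either adapts the JPSS argument step by step (checking that each vanishing and uniqueness statement survives, which is delicate) or runs the reduction-mod-$\ell$ argument sketched above, whose crux is precisely the integrality of the $c_k$. Everything after that --- the non-vanishing and the bookkeeping around the change of variable and the definition of the $L$-factor --- is routine.
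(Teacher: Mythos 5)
You should first note that this statement is not proved in the paper at all: it is recalled verbatim from Kurinczuk--Matringe (loc.\ cit.\ Corollary 3.11 and Lemma 3.12), so the only meaningful comparison is with their proof. Your outline does follow the classical Jacquet--Piatetski-Shapiro--Shalika scheme that they adapt: produce a $\gamma$-factor in $R(X)$ from a one-dimensionality statement for $R(X)$-valued quasi-invariant forms on the Kirillov models (reducing the corank $m-n>0$ case to corank $0$ via the Bernstein--Zelevinsky formalism, \Cref{L:phirest}, \Cref{L:derind}), show non-vanishing of the integrals to make $\gamma$ invertible, and then obtain $\epsilon\in R[X,X^{-1}]^\times$ by comparing the fractional ideal generated by the dual integrals with $L(q^{-1}X^{-1},\pi^{\mathfrak{c}},\pi'^{\mathfrak{c}})$ after the substitution $X\mapsto q^{-1}X^{-1}$; the normalization and the sign $c_{\pi'}(-1)^{n-1}$ are routine bookkeeping, and the non-vanishing step (via \Cref{T:Matringe} or via functions of small support in the Kirillov model) is fine.

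The genuine gap is exactly where you flag it, and the remedy you propose does not work. Deducing the one-dimensionality over $\fl$ ``by reduction modulo $\ell$'' from the $\ql$-case is not a valid argument: reducing $\zl$-valued forms mod $\Lambda$ produces invariant forms in characteristic $\ell$, i.e.\ a lower bound on the dimension, but it can never give the needed upper bound --- spaces of equivariant functionals can and do grow under reduction (this is the basic non-banal phenomenon), and the integrality of the coefficients $c_k$ on integral data says nothing about forms on $\Wc(\pi,\psi)$ that are not reductions. Note also that for a general $\pi$ of Whittaker type over $\fl$ there is no canonical integral lift, so one cannot even transport the functional-equation identity itself by reduction without a separate argument that every modular Whittaker function is the reduction of an integral $\ell$-adic one for a fixed lift. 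The only viable route is the one you merely name in your last paragraph: rerun the JPSS uniqueness argument directly over $\fl$ with $R(X)$-coefficients, checking that the derivative formalism and the generic vanishing of the obstruction Hom-spaces (which fail only at special values of $X$, not identically in the indeterminate) survive in characteristic $\ell$. That verification is precisely the content of the cited result of Kurinczuk--Matringe and is where all the modular difficulty lies; as written, your proposal leaves this decisive step unsupported, so it is a correct skeleton rather than a proof.
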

The last local factor, the $\gamma$-factor, is finally defined as follows.
\begin{definition}
    Let $\pi,\pi'\in \Rep$ be representations of Whittaker type. Then we define
    \[\gamma(X,\pi,\pi',\psi)\coloneq \epsilon(X,\pi,\pi',\psi)\frac{L(q^{-1}X^{-1},{\pi}^\mathfrak{c},{\pi'}^\mathfrak{c})}{L(X,\pi,\pi')}\in R(X).\]
\end{definition}
We now recall the most important properties of these local factors.
\begin{lemma}[{\cite[Theorem 3.13]{KurMat}}]\label{L:linc}
    Let $\pi,\pi',\pi''\in \Rep$ be representations of Whittaker type and $\tau$ a subrepresentation of $\pi$ of Whittaker type. Then
    \[\gamma(X,\pi,\pi',\psi)=\gamma(X,\tau,\pi',\psi)\] and \[L(X,\tau,\pi')^{-1}\lvert L(X,\pi,\pi')^{-1}.\]
    Moreover, we have the so-called inductivity relation
    \[\gamma(X,\pi\times\pi'',\pi',\psi)=\gamma(X,\pi,\pi',\psi)\gamma(X,\pi'',\pi',\psi).\]
\end{lemma}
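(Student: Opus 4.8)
The plan is to deduce all three assertions from manipulations of the unnormalised Rankin--Selberg integrals $I(X,W,W',\phi)$ and $I(X,W,W',j)$, exploiting the following reformulation of the functional equation: clearing the $L$-factors and inserting the definition of $\gamma$ gives, for all $W\in\Wc(\pi,\psi)$, $W'\in\Wc(\pi',\psi^{-1})$ and $\phi\in C_c^\infty(\Ff^n)$,
\[
I(q^{-1}X^{-1},\widetilde{W},\widetilde{W'},\widehat{\phi})=c_{\pi'}(-1)^{n-1}\,\gamma(X,\pi,\pi',\psi)\,I(X,W,W',\phi),
\]
and similarly with $I(X,W,W',j)$ in the unequal-rank case. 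Thus $c_{\pi'}(-1)^{n-1}\gamma(X,\pi,\pi',\psi)$ is precisely the unique constant of proportionality between the integral and its counterpart on the contragredient side; since the fractional ideal defining $L(X,\pi,\pi')$ is non-zero for any $\pi$ of Whittaker type, this constant is pinned down by a single non-vanishing integral. All comparisons below take place among elements of $R(X)$ and use only ring operations, so no appeal to convergence or analytic continuation is needed and $\mathrm{char}\,R$ plays no role.

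For the assertions about $\tau\subseteq\pi$, the key point is that $\Wc(\tau,\psi)\subseteq\Wc(\pi,\psi)$: since $\tau$ is of Whittaker type it is not contained in the kernel of the natural map $\pi\to\Wc(\psi)$, so composing $\tau\hookrightarrow\pi$ with that map yields a non-zero element of the one-dimensional space $\Ho_{G_n}(\tau,\Wc(\psi))$, whose image is by definition $\Wc(\tau,\psi)$. Consequently, as $W$ runs over $\Wc(\tau,\psi)$ the function $\widetilde{W}$ runs over a subset of $\Wc(\tau^{\mathfrak{c}},\psi^{-1})\subseteq\Wc(\pi^{\mathfrak{c}},\psi^{-1})$, and since $\tau$ and $\pi$ lie on the same $G_n$ the families of auxiliary data $W'$, $\phi$ (resp. $j$) coincide. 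Hence the fractional ideal generated by the $I(X,W,W',\phi)$ with $W\in\Wc(\tau,\psi)$ is contained in the corresponding ideal for $\pi$; comparing their normalised generators (the generator of the smaller principal fractional ideal being an $R[X]$-multiple of that of the larger, with both $L$-factors equal to $1$ at $X=0$) gives $L(X,\tau,\pi')^{-1}\mid L(X,\pi,\pi')^{-1}$ in $R[X]$. For the $\gamma$-factors, apply the displayed identity once for $\pi$ and once for $\tau$ to the same datum $W\in\Wc(\tau,\psi)$, $W'$, $\phi$ chosen so that $I(X,W,W',\phi)\neq 0$ (possible as the ideal attached to $\tau$ is non-zero), and divide: the signs $c_{\pi'}(-1)^{n-1}$ agree and one obtains $\gamma(X,\tau,\pi',\psi)=\gamma(X,\pi,\pi',\psi)$. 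The unequal-rank case is identical with $I(X,W,W',j)$ in place of $I(X,W,W',\phi)$.

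The inductivity relation is the substantive part, and the plan is to run the unfolding argument of Jacquet--Piatetski-Shapiro--Shalika in the present algebraic setting. Writing $\pi\in\Rep_{n_1}$, $\pi''\in\Rep_{n_2}$, $\pi'\in\Rep_m$ and using $\Wc(\pi\times\pi'',\psi)=\Wc(\Wc(\pi,\psi)\times\Wc(\pi'',\psi),\psi)$, one expresses a Whittaker function of $\pi\times\pi''$, evaluated on the relevant block-parabolic of $G_{n_1+n_2}$, as an integral over a unipotent quotient of a product $W_1(g_1)W_2(g_2)$ of Whittaker functions of the inducing data, twisted by the usual half-integral powers of $\abs\det$. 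Substituting this into $I(X,W,W',\phi)$, applying Fubini, and carrying out the standard change of variables identifies the result with a product of a zeta integral for the pair $(\pi,\pi')$ and one for $(\pi'',\pi')$, with suitably modified Schwartz functions and auxiliary indices; applying the functional equation to each factor and re-folding then exhibits the functional equation for $\pi\times\pi''$ with proportionality constant equal to $\gamma(X,\pi,\pi',\psi)\gamma(X,\pi'',\pi',\psi)$ times the sign appearing in its own functional equation, and uniqueness of that constant yields the claim. I expect the main obstacle to be precisely this bookkeeping: one must split into the various cases according to the relative sizes of $m$, $n_1$ and $n_2$, track the mirabolic/derivative structure entering the definition of the $I(X,W,W',j)$ (using \Cref{L:phirest}, \Cref{L:limit} and \Cref{T:Matringe}), and verify that each step of the classical convergent computation survives as a formal identity of power series over $R$ --- which it does, since after dividing by the relevant $L$-factors every integral in sight is a Laurent polynomial in $X$, so the argument is purely algebraic and characteristic-independent.
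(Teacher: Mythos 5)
Note first that the paper does not prove this lemma at all: it is quoted verbatim from \cite[Theorem 3.13]{KurMat}, so what you are proposing is a re-proof of Kurinczuk--Matringe's theorem rather than a reconstruction of an argument in this paper.

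Your treatment of the subrepresentation $\tau$ is essentially the right argument, but its first step is asserted rather than proved, and as stated it is circular: that ``$\tau$ is of Whittaker type, hence is not contained in the kernel of the map $\pi\to\Wc(\psi)$'' is not a formal consequence of the definitions. What is needed is the exactness of the $(N_n,\psi)$-coinvariants functor: exactness gives $\dim\tau_{N_n,\psi}+\dim(\pi/\tau)_{N_n,\psi}=\dim\pi_{N_n,\psi}=1$, so $\tau_{N_n,\psi}\to\pi_{N_n,\psi}$ is an isomorphism of lines and the Whittaker functional of $\pi$ restricts non-trivially to $\tau$; only then do you get $\Wc(\tau,\psi)\subseteq\Wc(\pi,\psi)$. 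Granting this, the inclusion of fractional ideals and the ``divide two functional equations at a datum with non-vanishing integral'' argument for $\gamma(X,\tau,\pi',\psi)=\gamma(X,\pi,\pi',\psi)$ are correct (the sign depends only on $\pi'$, and non-vanishing of some integral attached to $\tau$ is part of the construction of $L(X,\tau,\pi')$).

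The genuine gap is the inductivity relation, which is the substantive content of the cited theorem and is only outlined. Moreover the outline misdescribes the argument: substituting the integral expression for a Whittaker function of $\pi\times\pi''$ into $I(X,W,W',\phi)$ and applying Fubini does not identify the integral with a product of a zeta integral for $(\pi,\pi')$ and one for $(\pi'',\pi')$ --- the integral does not factor. The Jacquet--Piatetski-Shapiro--Shalika proof (and its analogue over $R$) applies the functional equation to an inner partial integral, uses auxiliary results on Whittaker functions restricted to mirabolic subgroups and partial Iwasawa decompositions, requires a genuine case analysis on the relative sizes of the three groups, and in the modular setting one must in addition replace the convergence and analytic-continuation steps (including the deformation in auxiliary complex parameters) by arguments with rational functions over a field possibly of characteristic $\ell$. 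None of this is carried out, and calling it ``bookkeeping'' understates what remains. As it stands the proposal establishes the first two assertions (modulo the exactness point above) but not the multiplicativity of $\gamma$; that part should either be proved in detail or simply quoted from \cite[Theorem 3.13]{KurMat}, as the paper does.
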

Finally, let us remark how these factors interact with respect to reduction$\mod\ell$.
\begin{lemma}[{\cite[Theorem 4.1, §4.1]{KurMat}}]\label{L:lfredmodl}
    Let $\pi,\pi'\in \Rep$ be two integral representations of Whittaker type over $\ql$. Then $L(X,\pi,\pi'), \, \epsilon(X,\pi,\pi',\psi),\, \gamma(X,\pi,\pi',\psi)\in \zl(X)$ and
    \[\gamma(X,\ol{\Wc(\pi,\psi)},\ol{\Wc(\pi',\psi)},\psi)= \rl(\gamma(X,\pi,\pi',\psi)),\, \]\[L(X,\ol{\Wc(\pi,\psi)},\ol{\Wc(\pi',\psi)})^{-1}\lvert \rl(L(X,\pi,\pi')^{-1}).\]
\end{lemma}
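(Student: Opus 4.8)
These are results of Kurinczuk--Matringe \cite{KurMat}; the plan is to reduce everything to the interaction of the Rankin--Selberg zeta integrals with the reduction map $\rl$. First I would fix, once and for all, compatible integral structures together with a Haar measure on $N_n\bs G_n$ (and on the auxiliary $M_{j,m}$ occurring when $n\neq m$) assigning every compact open subset a volume in $\ZZ[1/q]\subseteq\zl$. For $W\in\Wc^{en}(\pi,\psi)$, $W'\in\Wc^{en}(\pi',\psi^{-1})$ and $\phi\in C_c^\infty(\Ff^n,\zl)$, the usual reduction of the zeta integral (Iwasawa decomposition together with the asymptotic expansion of Whittaker functions along the torus) expresses each $c_k(W,W',\phi)$, and likewise each $c_k(W,W',j)$, as a \emph{finite} $\ZZ[1/q]$-combination of the $\zl$-values of the integrand; hence $I(X,W,W',\phi)\in\zl((X))$ and $\rl$ visibly commutes with forming it. Since integral data $\ql$-span all data, the fractional ideal of the $I$'s of integral data has the same generator $L(X,\pi,\pi')$ as the one attached to all data, and a short valuation argument ($\zl$ being a valuation ring) gives $L(X,\pi,\pi')\in\zl(X)$ with $L(X,\pi,\pi')^{-1}\in\zl[X]$; feeding integral data into the functional equation then forces $\epsilon(X,\pi,\pi',\psi)\in\zl[X,X^{-1}]$ and therefore $\gamma(X,\pi,\pi',\psi)\in\zl(X)$.

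For the equality of $\gamma$-factors I would first rewrite the functional equation, using the definition of $\gamma$, in its multiplicative form
\[I(q^{-1}X^{-1},\widetilde{W},\widetilde{W'},\hat\phi)=c_{\pi'}(-1)^{n-1}\,\gamma(X,\pi,\pi',\psi)\,I(X,W,W',\phi)\]
(with the evident modification involving the translation by $w_{n,n-m}$ when $n\neq m$), an identity valid over both $\ql$ and $\fl$ which determines $\gamma$ uniquely as soon as some zeta integral is non-zero. Now $\rl$ commutes with the Fourier transform attached to $\psi$ (whose normalising constant lies in $\zl^\times$), with the translation by $w_{n,n-m}$, and --- by the first paragraph --- with the zeta integrals; moreover $\Wc^{en}(\pi,\psi)\sra\ol{\Wc(\pi)}$, $\Wc^{en}(\pi',\psi^{-1})\sra\ol{\Wc(\pi')}$ and $C_c^\infty(\Ff^n,\zl)\sra C_c^\infty(\Ff^n,\fl)$ are surjective, so that every $\fl$-datum lifts to an integral one. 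Reducing the $\ql$-identity modulo $\ell$ and subtracting the $\fl$-identity yields
\[c_{\pi'}(-1)^{n-1}\bigl(\gamma(X,\ol{\Wc(\pi)},\ol{\Wc(\pi')},\psi)-\rl(\gamma(X,\pi,\pi',\psi))\bigr)\,I(X,\ol W,\ol{W'},\ol\phi)=0\]
for all $\fl$-data; choosing integral data whose reduction has non-zero zeta integral --- which exists since $\ol{\Wc(\pi)}$ is a non-zero representation of Whittaker type, so its zeta integrals do not all vanish --- gives $\gamma(X,\ol{\Wc(\pi)},\ol{\Wc(\pi')},\psi)=\rl(\gamma(X,\pi,\pi',\psi))$.

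For the divisibility of $L$-factors I would note that $L(X,\pi,\pi')^{-1}\,I(X,W,W',\phi)\in\ql[X]\cap\zl((X))=\zl[X]$ for every integral datum; applying $\rl$ and again using that every $\fl$-datum is a reduction, one obtains $\rl(L(X,\pi,\pi')^{-1})\cdot I(X,\ol W,\ol{W'},\ol\phi)\in\fl[X]$ for \emph{all} $\fl$-data. Since $L(X,\ol{\Wc(\pi)},\ol{\Wc(\pi')})^{-1}$ is by construction the smallest polynomial (up to a unit) clearing every $\fl$-zeta integral, it divides $\rl(L(X,\pi,\pi')^{-1})$.

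The formal part above is routine once the first paragraph is in place, so I expect the real obstacle to be precisely that first paragraph: pinning down mutually compatible integral structures, Fourier normalisations and Haar measures so that the zeta integrals genuinely take values in $\zl$ and commute with $\rl$ --- this is where the technical work of \cite{JacPiaSha} and \cite{KurMat} is concentrated. It is also worth stressing why the two conclusions are asymmetric: $L^{-1}$ is a gcd-type invariant (the generator of a denominator ideal), and reduction modulo $\ell$ can create new common factors among the zeta integrals, so the gcd over $\fl$ may be a proper divisor of the reduction of the gcd over $\ql$; the $\gamma$-factor, being the constant of a functional equation rather than a gcd, carries no such slack and hence reduces on the nose.
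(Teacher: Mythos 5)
This lemma is quoted in the paper directly from \cite{KurMat} (Theorem 4.1 and \S 4.1) without an independent proof, and your sketch is essentially the argument of that reference: integrality of the Rankin--Selberg coefficients for integral data, compatibility of reduction mod $\ell$ with the functional equation to identify the $\gamma$-factors, and the gcd characterisation of $L^{-1}$ for the divisibility, with the genuinely technical points (integral normalisations, a datum whose zeta integral is a unit, and $\ol{\Wc(\pi)}$ being of Whittaker type) correctly flagged as the content of \cite{KurMat}. Your outline is correct up to the minor slip that $L(X,\pi,\pi')^{-1}I(X,W,W',\phi)$ lies in $\ql[X,X^{-1}]\cap\zl((X))=\zl[X,X^{-1}]$ (a Laurent polynomial, not a priori in $\zl[X]$), which does not affect the divisibility argument.
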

\begin{lemma}\label{L:inclusion}
    Let $\pi\in \Rep_n,\pi'\in \Rep_m$ be two representation of Whittaker type and $\ain{k}{0}{n}$. Let $\tau$ be a subrepresentation of $\pi^{(k)}$ admitting a central character. Then
    \[L(X,\tau,\pi')^{-1}\lvert L(X,\pi,\pi')^{-1}.\]
\end{lemma}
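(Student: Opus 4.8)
The plan is to realize $\tau \hookrightarrow \Wc(\psi)$ as an honest subrepresentation of a Whittaker model and then compare Rankin--Selberg integrals directly, using the divisibility half of \Cref{L:linc}. First I would invoke \Cref{L:limit}: since $\tau$ is a subrepresentation of $\pi^{(k)} = \Kr(\pi,\psi)^{(k)}$ (after passing to the Kirillov model via \Cref{T:Kirillow}) admitting a central character, the map $\overline{S}$ gives an injection $\overline{S}\colon \tau \hookrightarrow \Wc(\psi)$, so that $\tau$ is identified with a genuine Whittaker-type subrepresentation $\Wc(\tau,\psi) \subseteq \Wc_{n-k}(\psi)$. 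Now I would apply \Cref{L:linc} to the subrepresentation $\Wc(\tau,\psi)$ of... well, $\Wc(\tau,\psi)$ is not literally a subrepresentation of $\Wc(\pi,\psi)$ since the two live in Whittaker spaces of different groups ($G_{n-k}$ versus $G_n$), so a more careful argument is needed, and this mismatch is the crux of the lemma.

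The key step is to connect the integrals defining $L(X,\tau,\pi')$ (integrals over $N_{n-k}\backslash G_{n-k}^j$) with integrals defining $L(X,\pi,\pi')$ (integrals over $N_n \backslash G_n^j$), exploiting the explicit formula \eqref{E:lim} for $\overline{S}$. Concretely, for $W \in \sigma$ (the inverse image of $\tau$ in $\Kr(\pi,\psi)$, lifted to $\Wc(\pi,\psi)$ by \Cref{T:Kirillow}) the function $\overline{S}(W)$ on $G_{n-k}$ is, up to the stationary limit and the normalizing factor $|\det(g)|^{k/2}\chi^{-1}(z)$, just $g \mapsto W\begin{pmatrix} g & \\ & 1_k \end{pmatrix}$; this is exactly the shape that appears in \Cref{L:phirest} and in \Cref{T:Matringe}. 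So I would take a test pair $(W_\tau, W')$ with $W_\tau \in \Wc(\tau,\psi)$, write $W_\tau = \overline{S}(W)$, and expand the Rankin--Selberg integral $I(X, W_\tau, W', j)$ for $\tau$ and $\pi'$ (both on groups of sizes $n-k$ and $m$). Using \Cref{T:Matringe}, I would produce a matching $W \in \Wc(\pi,\psi)$ whose restriction to the $G_{n-k}$-block realizes $W_\tau(g)\phi(\epsilon_{n-k}g)|\det g|^{(k-1)/2}$ for a suitable $\phi \in C_c^\infty(\Ff^{n-k})$; then $I(X,W,W',j')$ for $\pi$ and $\pi'$ (with $j'$ adjusted by $k$, so that the extra $(1_k)$-block is absorbed into the integration variables) equals, up to a unit in $R[X,X^{-1}]$ coming from the central-character twist and the power of $|\det|$, the integral $I(X,W_\tau,W',j)$ against $\phi$. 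Choosing $\phi$ to be (an approximation of) the characteristic function of a small neighbourhood of $\epsilon_{n-k}$ makes these integrals agree up to such a unit.

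From this I would conclude that every generator $I(X,W_\tau,W',j)$ of the fractional ideal defining $L(X,\tau,\pi')$ lies, up to a unit, inside the fractional ideal defining $L(X,\pi,\pi')$; hence $L(X,\pi,\pi')$ divides $L(X,\tau,\pi')$ as fractional ideals, which on inverses is precisely $L(X,\tau,\pi')^{-1}\mid L(X,\pi,\pi')^{-1}$. The main obstacle, as indicated, is the bookkeeping in the middle step: reconciling the index $j$ in the $(n-k)$-versus-$m$ integral with the index in the $n$-versus-$m$ integral, tracking the powers of $|\det|$ and the central-character factors introduced by \eqref{E:lim} and by \Cref{L:phirest}, and checking that the discrepancy is genuinely a unit in $R[X,X^{-1}]$ rather than something that could shrink the ideal. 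An alternative, possibly cleaner route that I would keep in reserve is to argue via $\gamma$-factors and the inductivity relation of \Cref{L:linc} together with \Cref{L:derind} ($\Wc(\pi') \subseteq \Wc(\pi_1 \times \pi')^{(n_1)}$), reducing the statement to the already-known divisibility $L(X,\tau,\pi')^{-1}\mid L(X,\pi,\pi')^{-1}$ for $\tau$ a subrepresentation in the \emph{same} Whittaker space, which is the content of \Cref{L:linc}; but making the derivative-to-subrepresentation passage precise still ultimately relies on \Cref{L:limit} and \Cref{T:Matringe}, so the analytic input is unavoidable either way.
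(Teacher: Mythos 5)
Your proposal follows essentially the same route as the paper: the paper also reduces the lemma to the statement that every $W_0\in\Wc(\tau,\psi)$ and $\phi\in C_c^\infty(\Ff^{n-k})$ can be matched by some $W\in\Wc(\pi,\psi)$ with $W\begin{pmatrix}g&\\&1_{n-k}\end{pmatrix}=W_0(g)\phi(\epsilon_{n-k}g)\lvert\det(g)\lvert^{\frac{k-1}{2}}$, and then applies \Cref{T:Matringe} via the central character hypothesis, exactly as you do. The integral bookkeeping you identify as the crux is not redone in the paper but outsourced to the proofs of \cite[Lemma 4.6, Proposition 4.7]{KurMat} and \cite[Lemma 9.2]{JacPiaSha}, so your sketch of that step is just an unpacking of the cited reduction.
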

\begin{proof}
    As explained in the proof of \cite[Lemma 4.6, Proposition 4.7]{KurMat}, see also \cite[Lemma 9.2]{JacPiaSha}, the claim holds true if one shows the following.
    For any $W_0\in \Wc(\tau,\psi)$ and $\phi\in C_c^\infty(\Ff^{n-k})$ there exists $W\in \Wc(\pi,\psi)$ such that for all $g\in G_{n-k}$ \[W\begin{pmatrix}
        g&\\&1_{n-k}
\end{pmatrix}=W_0(g)\phi(\epsilon_{n-k}g)\lvert\det(g)\lvert^{\frac{k-1}{2}}.\]
But by assumption on $\tau^{(k)}$, $\Wc(\tau^{(k)},\psi)$ admits a central character, hence this follows from \Cref{T:Matringe}.
\end{proof}
We finish by recalling the $L$-factors of $\Cc$-parameters of \cite{KurMatII}. For the moment we slightly extend this notation to aperiodic multisegments and to avoid confusion we will denote our parameters by $\Cc'$.
\begin{definition}
    Let $\rho,\rho'\in \cusp$. Then we set
    \[L(X,\Cc'(\rho),\Cc'(\rho'))\coloneq\]\[= \begin{cases}
        (1-(\chi(\varpi_\Ff)X)^{f(\rho)})^{-1}&\text{ if }\rho'\cong \chi\rho^\lor,\, \rho\in \scu,\, \chi\text{ an unramified character},\\
        1&\text{ otherwise.}
    \end{cases}\]
    Let $\De=[a,b]_\rho,\De'=[a',b']_\rho'$ be two segments. Then we set 
    \[L(X,\Cc'(\De),\Cc'(\De'))\coloneq \begin{cases}
        \prod_{i=a}^bL(X,\Cc'(\rho \lvert-\lvert^i),\Cc'(\rho' \lvert-\lvert^b))&\text{ if }l(\De)\le l(\De'),\\
        \prod_{i=a'}^{b'}L(X,\Cc'(\rho \lvert-\lvert^b),\Cc'(\rho' \lvert-\lvert^i))&\text{ if }l(\De)\ge l(\De').
    \end{cases}\]
    Finally, if $\fm,\fn\in \Ms$ of the form $\fm=\De_1+\ldots+\De_k,\, \fn=\De_1'+\ldots+\De_l'$ we set
    \[L(X,\Cc'(\fm),\Cc'(\fn))\coloneq\prod_{\substack{1\le i\le k \\ 1\le j\le l}} L(X,\Cc'(\De_i),\Cc'(\De_j')).\]
\end{definition}
\subsection{ Associative products and the map \texorpdfstring{$\sccp$}{Sc}}

For the rest of this section, we fix an additive character $\psi$ of $\Ff$.
Moreover, we also fix $\rho\in \scu$ and let $Q$ be the quiver of \Cref{S:genext} with $n=o(\rho)>1$. Fix a cuspidal support $\db=d_1\cdot [\rho\abs]+\ldots+d_{o(\rho)}[\rho\abs^{o(\rho)}]$ and let $\Ms(\rho)_\db$ be the set of multisegments with support $\db$. By abuse of notation we also denote by $\db$ the dimension vector $\db=(d_1,\ldots,d_{o(\rho)})$. We then recall the natural bijection
\[\Ms(\rho)_\db\leftrightarrow \{G_\db-\text{orbits in }E_\db\},\]
which by \Cref{P:oclos} respects the orders on each sides.
This bijection allows us to import the generic extension product to $\Ms(\rho)$, which by abuse of notation we also denote by $*$. 
This product has the following representation-theoretic interpretation.
By \Cref{T:serre}, there exists for $\fm\in \Ms^{ap}(\rho)$ representations $\rho_1,\ldots,\rho_k$ in $\ZZ[\rho]$ such that $\fm=\rho_1*\ldots*\rho_k$. We then set \[\mathcal{S}_{gen,\psi}(\fm)\coloneq\Wc(\rho_1\times\ldots\times \rho_k,\psi).\] We will see in a moment that this representation is independent of our choices.
To prove independence, we first note the following analoga of the degenerate Serre relations.
\begin{lemma}\label{L:serrep}
Let $\rho'\in \ZZ[\rho]$ such that $\rho'\ncong \rho\abs^{\pm1}$. Then
\[\Wc(\rho\times\rho',\psi)=\Wc(\rho'\times \rho,\psi).\]
If $o(\rho)>2$,
\[ \Wc(\rho\times\rho\times \rho\abs,\psi)= \Wc(\rho\times\rho\abs\times\rho,\psi),\]
\[\Wc(\rho\times\rho\abs\times \rho\abs,\psi)= \Wc(\rho\abs\times\rho\times\rho\abs,\psi).\]
If $o(\rho)=2$, 
\[\Wc(\rho\times\rho\abs\times \rho\times \rho,\psi)= \Wc(\rho\times \rho\times\rho\abs\times\rho,\psi).\]
\end{lemma}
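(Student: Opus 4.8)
The plan is to deduce all identities from a single "atomic" inclusion, namely $\Wc(\rho\times\rho\abs)\subseteq\Wc(\rho\abs\times\rho)$, together with the formal machinery of Whittaker models (uniqueness of the Whittaker functional, the product formula $\Wc(\pi_1\times\pi_2)=\Wc(\Wc(\pi_1)\times\Wc(\pi_2))$, and its associativity). First I would dispose of the commuting relation $\Wc(\rho\times\rho')=\Wc(\rho'\times\rho)$ when $\rho'\not\cong\rho\abs^{\pm1}$: here $\rho\times\rho'$ is irreducible (since the only reducibility for cuspidals on the same line is at $\abs^{\pm1}$), hence generic, and an irreducible generic representation has a one-dimensional space of Whittaker functionals and a unique embedding into $\Wc(\psi)$ up to scalar; since $\rho\times\rho'\cong\rho'\times\rho$ as abstract representations (commutativity of parabolic induction for irreducibles, recalled in the Notations section), their images in $\Wc(\psi)$ coincide.

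Next I would establish the key inclusion $\Wc(\rho\times\rho\abs)\subseteq\Wc(\rho\abs\times\rho)$. The representation $\rho\abs\times\rho$ is of Whittaker type, so $\Wc(\rho\abs\times\rho)$ is socle-irreducible with a unique irreducible sub appearing with multiplicity one; by Zelevinsky's theory this socle is the generic (non-degenerate) constituent $\langle[0,0]_\rho+[1,1]_\rho\rangle=\rho\times\rho\abs$... wait, more precisely $\rho\times\rho\abs$ and $\rho\abs\times\rho$ have the same (length-two) Jordan–Hölder content, one generic constituent $\Z([0,1]_\rho)^*$ and one non-generic $\Z([0,1]_\rho)$; the generic one is $\langle[0,0]_\rho+[1,1]_\rho\rangle$ and it is the image of \emph{any} nonzero Whittaker map. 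Since $\rho\times\rho\abs$ is itself of Whittaker type with $\Wc(\rho\times\rho\abs)$ socle-irreducible containing this same generic constituent as its socle, and the full induced module $\rho\times\rho\abs$ surjects onto nothing bigger in $\Wc(\psi)$ than what its socle forces — I would instead argue via derivatives: using \Cref{L:phirest} and \Cref{L:derind}, both $\Wc(\rho\times\rho\abs)$ and $\Wc(\rho\abs\times\rho)$ sit inside $\Wc_{2\deg\rho}(\psi)$, and one checks directly on Kirillov models that the space cut out by $\rho\times\rho\abs$ is contained in that cut out by $\rho\abs\times\rho$ because the relevant derivative $\tau^{(k)}$ of the former is a subspace of that of the latter (this is where the hypothesis on the cuspidal line, hence on which induced module is "more reducible", enters). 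Alternatively and more cleanly: $\rho\times\rho\abs$ embeds into $\rho\abs\times\rho$? No — rather $\Z([0,1]_\rho)\hookrightarrow\rho\times\rho\abs$ and $\rho\abs\times\rho\twoheadrightarrow\Z([0,1]_\rho)$, so $\rho\times\rho\abs$ and $\rho\abs\times\rho$ are non-isomorphic with a common generic constituent; the inclusion of Whittaker models then follows because the image of the Whittaker map on $\rho\times\rho\abs$ is generated by the image of its generic \emph{sub}quotient, which is the socle of $\Wc(\rho\abs\times\rho)$. I expect this verification — pinning down that $\Wc(\rho\times\rho\abs)$ is genuinely \emph{contained in}, not merely abstractly isomorphic to a subspace of, $\Wc(\rho\abs\times\rho)$ — to be the main obstacle, and the honest way through it is the explicit Kirillov/derivative computation of \Cref{T:Matringe} and \Cref{L:phirest}.

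Finally, I would bootstrap the length-three identities from the length-two inclusion using associativity and the product formula. For $o(\rho)>2$: applying $\Wc((-)\times\rho)$ to $\Wc(\rho\times\rho\abs)\subseteq\Wc(\rho\abs\times\rho)$ on the left is not directly what is wanted; instead, $\Wc(\rho\times\rho\times\rho\abs)=\Wc(\rho\times(\rho\times\rho\abs))\supseteq\Wc(\rho\times\Wc(\rho\abs\times\rho))=\Wc(\rho\times\rho\abs\times\rho)$, and symmetrically $\Wc(\rho\times\rho\abs\times\rho)=\Wc((\rho\times\rho\abs)\times\rho)\subseteq\Wc((\rho\abs\times\rho)\times\rho)=\Wc(\rho\abs\times\rho\times\rho)$; comparing Jordan–Hölder lengths (both $\rho\times\rho\times\rho\abs$ and $\rho\times\rho\abs\times\rho$ have the same semisimplification, and the relevant generic constituent occurs once) forces the inclusion $\Wc(\rho\times\rho\times\rho\abs)\supseteq\Wc(\rho\times\rho\abs\times\rho)$ to be an equality, since a proper containment of two Whittaker-type submodules of $\Wc(\psi)$ with the same generic socle-multiplicity would have to be strict on some non-generic constituent, which a dimension/length count rules out. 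The identity $\Wc(\rho\times\rho\abs\times\rho\abs)=\Wc(\rho\abs\times\rho\times\rho\abs)$ is handled the same way, and the $o(\rho)=2$ statements are the literal translation of the degenerate Serre relation for $n=2$ in \Cref{T:serre}(3) via the same associativity manipulation. Throughout, the guiding principle is that these are exactly the degenerate Serre relations of \Cref{T:serre} transported through the (associativity of the) $*$-product on $\Rep_{W,\psi}$, so the combinatorial bookkeeping is already done; only the single analytic inclusion needs real work.
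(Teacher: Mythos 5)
Your reduction of the lemma to the single inclusion $\Wc(\rho\times\rho\abs)\subseteq \Wc(\rho\abs\times\rho)$ plus associativity does not work, and this is the main gap. From that inclusion and monotonicity of the product one only gets $\Wc(\rho\times\rho\times\rho\abs)=\Wc(\rho\times\Wc(\rho\times\rho\abs))\subseteq\Wc(\rho\times\Wc(\rho\abs\times\rho))=\Wc(\rho\times\rho\abs\times\rho)$ — note your displayed containment goes the wrong way — and your device for upgrading this to an equality, a Jordan--H\"older ``length count'', is unfounded: the Whittaker models are \emph{images} of the induced representations in $\Wc(\psi)$, and the fact that $\rho\times\rho\times\rho\abs$ and $\rho\times\rho\abs\times\rho$ have the same semisimplification says nothing about the lengths of these images, so nothing rules out a proper containment on non-generic constituents. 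The paper's argument is different and complete: by \cite[Proposition 7.17]{MinSec} each side surjects onto $\rho\times\langle[0,1]_\rho\rangle$ resp.\ $\langle[0,1]_\rho\rangle\times\rho$, which are irreducible (the multisegment $[0,0]_\rho+[0,1]_\rho$ is unlinked, \Cref{L:ordersubsubquot}) and generic; hence each Whittaker model \emph{is} the Whittaker model of that irreducible quotient, these two quotients are isomorphic by commutativity of induction in the Grothendieck group, and uniqueness of the Whittaker model of an irreducible generic representation gives equality of subspaces. The same mechanism also settles your ``main obstacle'': $\Wc(\rho\times\rho\abs)$ is the Whittaker model of the irreducible generic quotient $\langle[0,1]_\rho\rangle$, which, being the unique generic subquotient of $\rho\abs\times\rho$, is the socle of $\Wc(\rho\abs\times\rho)$; no Kirillov/derivative computation via \Cref{T:Matringe} or \Cref{L:phirest} is needed, and your proposal never actually carries out the computation it defers to.

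A second, related gap is the case $o(\rho)=2$. You assert these identities follow ``via the same associativity manipulation'', but there is no length-two atomic inclusion available there: the lemma deliberately does not claim $\Wc(\rho\times\rho\abs)\subseteq\Wc(\rho\abs\times\rho)$ when $o(\rho)=2$, because $\langle[0,1]_\rho\rangle$ has $l(\De)=o(\rho)$ and is no longer of Whittaker type, so your bootstrap has nothing to start from. The paper instead treats $o(\rho)=2$ by the same direct identification of each Whittaker model with that of an appropriate irreducible generic quotient. (Minor further points: the generic constituent of $\rho\times\rho\abs$ is $\langle[0,1]_\rho\rangle$, not $\langle[0,0]_\rho+[1,1]_\rho\rangle$ as first written, though you correct this; and the monotonicity step $\tau_1\subseteq\tau_2\Rightarrow\Wc(\rho\times\tau_1)\subseteq\Wc(\rho\times\tau_2)$ deserves a word on why the restricted Whittaker functional is nonzero.)
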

\begin{proof}
Firstly, if $\rho'\ncong \rho\abs^{\pm1}$, $\rho\times\rho'$ is irreducible
\[\Wc(\rho\times\rho',\psi)\cong \rho\times\rho'\cong \rho'\times\rho\cong \Wc(\rho'\times \rho,\psi).\] and hence they are equal.
We only show the remaining claims for $o(\rho)>2$, the case $o(\rho)=2$ follows analogously.
We start with the equality $\Wc(\rho\times\rho\times \rho\abs,\psi)= \Wc(\rho\times\rho\abs\times\rho,\psi)$.
Note that by \cite[Proposition 7.17]{MinSec}, \[\rho\times\rho\times \rho\abs\sra \rho\times \langle[0,1]_\rho\rangle,\] which is irreducible \Cref{L:ordersubsubquot} and of Whittaker type. Thus the uniqueness of the Whittaker model forces \[\Wc(\rho\times\rho\times \rho\abs,\psi)\cong  \rho\times \langle[0,1]_\rho\rangle.\]
Similarly, \[\Wc(\rho\times\rho\abs\times \rho,\psi)\cong \langle[0,1]_\rho\rangle\times \rho.\] By the commutativity of parabolic induction on the Grothendieck group $\langle[0,1]_\rho\rangle\times \rho\cong \rho\times \langle[0,1]_\rho\rangle$ and the claim follows. The equality $\Wc(\rho\times\rho\abs\times \rho\abs,\psi)= \Wc(\rho\abs\times\rho\times\rho\abs,\psi)$ follows by an analogues argument as the equality $\Wc(\rho\times\rho\times \rho\abs,\psi)= \Wc(\rho\times\rho\abs\times\rho,\psi)$.
\end{proof}
As a corollary to the above lemma and \Cref{T:serre} we obtain the following. 
\begin{corollary}\label{C:assprod}
The representation
$\sccp(\fm)$ is independent of the particular sequence $\rho_1,\ldots,\rho_k$ with $\rho_1*\ldots*\rho_k=\fm$.
Moreover, the map $\sccp\colon\Ms(\rho)^{ap}\ra \Rep_W$ respects the respective products, \emph{i.e.} for $\fm_1,\fm_2\in \Ms(\rho)^{ap}$, we have
\[\Wc(\sccp(\fm_1)\times \sccp(\fm_2),\psi)=\sccp(\fm_1*\fm_2)\]
\end{corollary}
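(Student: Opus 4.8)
The plan is to deduce both assertions from \Cref{T:serre} together with the relations in \Cref{L:serrep}. For the first assertion, by \Cref{T:serre} any two sequences $(\rho_1,\ldots,\rho_k)$ and $(\rho_1',\ldots,\rho_k')$ in $\ZZ[\rho]$ with $\rho_1*\ldots*\rho_k=\rho_1'*\ldots*\rho_k'=\fm$ correspond to two words $w,w'\in\Omega$ with $\fm_{gen}(w)=\fm_{gen}(w')$, and hence (again by \Cref{T:serre}) $w$ and $w'$ are related by a finite chain of the degenerate Serre relations (1)--(3). So it suffices to check that applying a single one of these relations to a word does not change the associated Whittaker model $\Wc(\rho_{i_1}\times\ldots\times\rho_{i_k})$, where we write $\rho_j=\rho\abs^j$. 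First I would record that, since the product $*$ on $\Rep_{W,\psi}$ is associative (as noted in the Whittaker models section, via uniqueness of the Whittaker model), we have $\Wc(\rho_{i_1}\times\ldots\times\rho_{i_k})=\Wc(\cdots\times \Wc(\rho_{i_a}\times\ldots\times\rho_{i_b})\times\cdots)$ for any consecutive block; thus it is enough to verify the relation on the relevant sub-block of two or three factors. Relation (1) is exactly the first equality of \Cref{L:serrep} (the two indices are non-neighbours, so the corresponding cuspidal representations $\rho\abs^i,\rho\abs^j$ are not a twist of each other by $\abs^{\pm1}$, giving $\Wc(\rho\abs^i\times\rho\abs^j)=\Wc(\rho\abs^j\times\rho\abs^i)$). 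Relations (2) and (3) translate, after the harmless index shift $i\mapsto\rho\abs^i$, precisely into the three-factor equalities of \Cref{L:serrep} in the cases $o(\rho)>2$ and $o(\rho)=2$ respectively. Hence any Serre move leaves the Whittaker model unchanged, and $\scc(\fm)$ is well defined.

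For the second assertion, fix $\fm_1,\fm_2\in\Ms(\rho)^{ap}$ and write $\fm_1=\rho_{i_1}*\ldots*\rho_{i_a}$, $\fm_2=\rho_{j_1}*\ldots*\rho_{j_b}$ via \Cref{T:serre}. Then, by the definition of the generic-extension product in terms of words (concatenation of words corresponds to $*$ of multisegments, \emph{cf.} the identities $\fm_{gen}(iw)=i+\fm_{gen}(w)$ and $\fm_{gen}(wi)=\fm_{gen}(w)+i$ and the associativity of $*$), the concatenated word $i_1\cdots i_a j_1\cdots j_b$ satisfies $\fm_{gen}(i_1\cdots i_a j_1\cdots j_b)=\fm_1*\fm_2$, which is aperiodic and lies in $\Ms(\rho)^{ap}$. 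Therefore by the (already established) well-definedness of $\scc$,
\[
\scc(\fm_1*\fm_2)=\Wc(\rho_{i_1}\times\ldots\times\rho_{i_a}\times\rho_{j_1}\times\ldots\times\rho_{j_b}).
\]
Now I would use associativity of $*$ on $\Rep_{W,\psi}$ once more, together with the identity $\Wc(\pi_1\times\pi_2)=\Wc(\Wc(\pi_1,\psi)\times\Wc(\pi_2,\psi),\psi)$ recalled in the Whittaker models section, to rewrite the right-hand side as $\Wc\big(\Wc(\rho_{i_1}\times\ldots\times\rho_{i_a})\times \Wc(\rho_{j_1}\times\ldots\times\rho_{j_b})\big)=\Wc(\scc(\fm_1)\times\scc(\fm_2))$, which is the claim.

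The only genuine content is the well-definedness step, and within it the verification that the three-term Serre relations are matched by \Cref{L:serrep}; but that identification is essentially bookkeeping, since \Cref{L:serrep} was evidently stated in exactly the form needed, and the passage from a global word to a three-letter sub-block is justified by associativity of $*$ on $\Rep_{W,\psi}$ plus the compatibility $\Wc(\pi_1\times\pi_2)=\Wc(\Wc(\pi_1)\times\Wc(\pi_2))$. So I do not expect a real obstacle here; the one point to be careful about is that \Cref{L:serrep} gives, in the "neighbouring" cases, only an \emph{inclusion} $\Wc(\rho\times\rho\abs)\subseteq\Wc(\rho\abs\times\rho)$ rather than an equality — but the degenerate Serre relations (2)--(3) only ever involve the \emph{three}-letter words, for which \Cref{L:serrep} does provide equalities, so this asymmetry never enters the well-definedness argument.
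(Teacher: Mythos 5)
Your proposal is correct and is precisely the derivation the paper intends (the corollary is stated there without proof as following from \Cref{T:serre} and \Cref{L:serrep}): well-definedness by reducing any two presenting words to one another via the degenerate Serre relations and matching each relation with the corresponding equality of \Cref{L:serrep}, the block substitution being justified by $\Wc(\pi_1\times\pi_2)=\Wc(\Wc(\pi_1)\times\Wc(\pi_2))$ and associativity, and the product compatibility by concatenating words. One minor wording slip: in the case $o(\rho)=2$ the relevant Serre relation and its counterpart in \Cref{L:serrep} involve four letters rather than three, but the lemma supplies exactly that four-factor equality, so the argument is unaffected.
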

Next we define the map $\sccpu$ by setting for $\fm\in\Ms(\rho)$ not necessarily aperiodic
\[\sccpu(\fm)\coloneq \bigcup_{\substack{\fn\lp\fm,\\ \fn\text{ aperiodic}}}\sccp(\fn).\] Here the union is taken in the space of Whittaker functions.
\begin{prop}\label{L:sccorder}
    Let $\fn,\fm\in \Ms(\rho)$ with $\fn\lp\fm$. Then \[\sccpu(\fn)\subseteq \sccpu(\fm).\]
     Moreover, let $\fm'\in \Ms(\rho)$ such that $\fm'*\rho=\fm$. Then \[\Wc(\sccpu(\fm')\times\rho,\psi)\subseteq \sccpu(\fm).\] Similarly, if $\fm'\in \Ms(\rho)$ such that $\rho*\fm'=\fm$, then \[\Wc(\rho\times \sccpu(\fm'),\psi)\subseteq \sccpu(\fm).\]
\end{prop}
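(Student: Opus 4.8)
Write $\rho=[0,0]_\rho$. My plan is to first settle the two ``Moreover'' identities when $\fm'$ is aperiodic, then prove the order statement for aperiodic $\fn\lp\fm$ by a peeling induction, and finally extend everything to non-aperiodic multisegments by unwinding the definition of $\scc$.

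\emph{Aperiodic case of the identities.} If $\fm'\in\Ms(\rho)^{ap}$, then by \Cref{T:serre} we may write $\fm'=\fm_{gen}(w)$, so that $\fm'*\rho=\fm_{gen}(w0)$ and $\rho*\fm'=\fm_{gen}(0w)$ are again aperiodic. In this situation the identities $\Wc(\scc(\fm')\times\rho)=\scc(\fm'*\rho)$ and $\Wc(\rho\times\scc(\fm'))=\scc(\rho*\fm')$ are precisely the product-compatibility of \Cref{C:assprod} (applied with $\fm_1=\fm'$, $\fm_2=[0,0]_\rho$, resp.\ $\fm_1=[0,0]_\rho$, $\fm_2=\fm'$), together with $\scc([0,0]_\rho)=\Wc(\rho)$ and the identity $\Wc(\pi_1\times\pi_2)=\Wc(\Wc(\pi_1)\times\Wc(\pi_2))$; the relations of \Cref{L:serrep} are what make $\scc$ independent of the chosen word.

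\emph{Order statement.} First I would reduce to $\fn,\fm\in\Ms(\rho)^{ap}$: if $\fm\notin\Ms(\rho)^{ap}$ and $\fn$ is aperiodic, then $\scc(\fn)\subseteq\scc(\fm)$ is one of the defining terms of $\scc(\fm)=\bigcup_{\fk\lp\fm,\ \text{ap}}\scc(\fk)$; and if $\fn\notin\Ms(\rho)^{ap}$, then $\scc(\fn)=\bigcup_{\fk\lp\fn,\ \text{ap}}\scc(\fk)$ with each $\fk\lp\fn\lp\fm$, so transitivity of $\lp$ brings us back to the aperiodic case. For aperiodic $\fn\lp\fm$ I would induct on $\deg(\fm)=\deg(\fn)$, the case $\deg(\fm)\le\deg(\rho)$ being trivial since then $\fn=\fm$. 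For the step, run the algorithm recalled in the proof of \Cref{T:serre} on the \emph{generic} multisegment $\fn$: it produces a vertex $c$ and an aperiodic $\fn^{\flat}$ of strictly smaller degree with $\fn=\fn^{\flat}*\rho\abs^{c}$, so \Cref{C:assprod} gives $\scc(\fn)=\Wc(\scc(\fn^{\flat})\times\rho\abs^{c})$. Now feed $[\fn]=[\fn^{\flat}*\rho\abs^{c}]\lp[\fm]$ into \Cref{L:genorder} — here the hypothesis holds because it is $\fn$, not $\fm$, that is the generic term — to exhibit $\lambda(\fm)$, up to $G_\db$-conjugacy, as an extension built from the simple at $c$ and a representation $\lambda(\fk')$ with $\fn^{\flat}\lp\fk'$; in multisegment language this means $\fm$ has a segment $[a',c]_\rho$ whose shortening $\fm^{\flat}=\fm-[a',c]_\rho+[a',c-1]_\rho$ is dominated by $\fk'$, and \Cref{L:longseg} supplies the degeneration $\fm^{\flat}*\rho\abs^{c}\lp\fm$. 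Combining the induction hypothesis at the smaller degree (to get $\scc(\fn^{\flat})\subseteq\scc(\fm^{\flat})$, via $\fn^{\flat}\lp\fk'$ and, if needed, descending from $\fk'$ to an aperiodic multisegment below it using monotonicity of $*$ in \Cref{L:genorder}) with the aperiodic ``Moreover'' identity already in hand, one is reduced to checking that the Zelevinsky-type $G$-map $\langle[a',c-1]_\rho\rangle\times\rho\abs^{c}\to\langle[a',c]_\rho\rangle$, tensored with the identity on the remaining factors, remains non-zero after passing to Whittaker models. This last point is where \Cref{L:ordersubsubquot} pins down the Jordan--Hölder constituents, and where \Cref{L:derind}, \Cref{T:Matringe} and uniqueness of the Whittaker model guarantee that a non-zero map out of a subrepresentation of Whittaker type still meets the line of Whittaker functionals.

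\emph{General case of the identities.} Let finally $\fm'\in\Ms(\rho)$ be arbitrary with $\fm'*\rho=\fm$ (the case $\rho*\fm'=\fm$ being symmetric). Since parabolic induction is exact and commutes with directed unions of subrepresentations, and since passing to Whittaker models commutes with such unions by uniqueness of the Whittaker functional, the aperiodic identity yields
\[\Wc(\scc(\fm')\times\rho)=\bigcup_{\fk\lp\fm',\ \text{ap}}\Wc(\scc(\fk)\times\rho)=\bigcup_{\fk\lp\fm',\ \text{ap}}\scc(\fk*\rho).\]
By monotonicity of $*$ in \Cref{L:genorder}, each $\fk*\rho\lp\fm'*\rho=\fm$ is aperiodic, so the order statement gives $\scc(\fk*\rho)\subseteq\scc(\fm)$ and hence ``$\subseteq$''; conversely \Cref{L:genorder} shows every aperiodic $\fk'\lp\fm$ is dominated by some $\fk*\rho$ with $\fk\lp\fm'$ aperiodic, so that $\scc(\fm)=\bigcup_{\fk'\lp\fm,\ \text{ap}}\scc(\fk')$ sits inside the right-hand side, giving ``$\supseteq$''.

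\emph{Where the difficulty lies.} The crux is the inductive step of the order statement: synchronising the vertex $c$ peeled off $\fn$ by the algorithm of \Cref{T:serre} with a compatible peeling of $\fm$ coming from \Cref{L:genorder} and \Cref{L:longseg}, all without leaving $\Ms(\rho)^{ap}$, and then faithfully transporting the containment of induced modules to the level of Whittaker models. The arithmetic modulo $o(\rho)$ in that algorithm and the fact that \Cref{L:genorder} is only a one-directional filtration statement are precisely the features that make this bookkeeping nontrivial.
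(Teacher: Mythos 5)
Your overall strategy---peeling a simple off the smaller multisegment, invoking \Cref{C:assprod}, \Cref{L:longseg} and \Cref{L:genorder}, and inducting on degree---is the same skeleton as the paper's proof, but the genuinely hard cases are not carried out, and the way you order the steps makes them unreachable. In your induction for the order statement you arrive at needing $\Wc(\scc(\fm^{\flat})\times\rho\abs^{c})\subseteq\scc(\fm)$, where $\fm^{\flat}$ need not be aperiodic and $\fm^{\flat}*\rho\abs^{c}$ may be strictly below $\fm$; the ``aperiodic Moreover identity already in hand'' does not cover this, while your step~3 (which would) invokes the order statement, so as ordered the argument is circular. The paper resolves this by proving the identities and the order statement \emph{simultaneously}, in one intertwined induction on $\deg$ and on $\lp$ (its Claims 1--4). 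Worse, your proposed reduction of the residual case to ``the Zelevinsky-type map $\langle[a',c-1]_\rho\rangle\times\rho\abs^{c}\to\langle[a',c]_\rho\rangle$ remains non-zero after passing to Whittaker models'' is not the right statement: what is required is an \emph{inclusion} of two specific subspaces of $\Wc(\psi)$ (Whittaker models of two different induced representations), and such inclusions are genuinely asymmetric---e.g.\ $\Wc(\rho\times\rho\abs)\subseteq\Wc(\rho\abs\times\rho)$ but not conversely---so non-vanishing of intertwining operators cannot produce them. This is exactly the content of \Cref{L:serrep} and of the case analysis in the paper's proof (Cases 1, 2.1, 2.2 of Claim~1; the single-elementary-operation case at the end of Claim~2; the periodic-part analysis via \Cref{L:elemopap} in Claim~3, with separate treatment of $o(\rho)=2$ and $o(\rho)>2$), none of which appears in your sketch; your closing paragraph correctly identifies this bookkeeping as the crux but does not do it.

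Two further points. First, in the converse inclusion of your step~3 you assert that \Cref{L:genorder} shows every aperiodic $\fk'\lp\fm$ is dominated by some $\fk*\rho$ with $\fk\lp\fm'$ aperiodic; \Cref{L:genorder} only gives monotonicity of $*$ and a sub/quotient filtration for orbits above a generic extension, and does not yield such a factorization compatible with stripping a $\rho$ off $\fm=\fm'*\rho$---in the paper this is precisely where \Cref{L:elemopap} and the explicit constructions ($\fn_p$, $\fm''$, the congruence conditions on $c$ modulo $o(\rho)$) are needed. Second, extracting from \Cref{L:genorder} that $\fm$ has a segment ending at $c$ whose shortening dominates $\fn^{\flat}$ is itself not immediate; it is the combinatorial remark the paper establishes just before its proof (by induction, using \Cref{L:longseg} and a lemma from the DroMod reference), so it would need to be proved, not cited from \Cref{L:genorder} alone.
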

\begin{proof}
The first claim follows from the definition.
For the second claim we argue as follows.
We only treat the case $\fm'*\rho=\fm$ as the other one follows similarly. Then for $\fn'\lp\fm'$ we have by \Cref{L:genorder} that $\fn'*\rho\lp\fm$ and hence $\Wc(\sccpu(\fm')\times\rho,\psi)\subseteq \sccpu(\fm)$ by the first claim. 
\end{proof}
Finally for $\fm\in \Msi$, write $\fm=\fm_1+\ldots+\fm_k$ with $\fm_i\in \Msi(\rho_i),\, \rho_i\in \scu$ in pairwise different cuspidal lines. Then we set 
    \[\sccp(\fm) \coloneq \Wc(\sccp(\fm_1)\times\ldots\times \sccp(\fm_k),\psi),\,\sccpu(\fm)\coloneq \bigcup_{\substack{\fn\lp\fm,\\ \fn\text{ aperiodic}}}\sccp(\fn)\]
    and if $\fm'=\fm_1'+\ldots+\fm_k'\in \Msi,\, \fm_i'\in \Ms(\rho_i)$ is a second multisegment, we define  \[\fm*\fm'\coloneq \fm_1*\fm_1'+\ldots+\fm_k*\fm_k',\] extending the product $*$ to $*\colon \Msi\times \Msi\ra\Msi.$ 
\begin{lemma}\label{L:sccdiffsupseg}
Let $\fm=\fm_1+\ldots+\fm_k\in \Msi^{ap}$ as above. Then
    \[\Wc(\sccp(\fm),\psi)\cong \sccp(\fm_1)\times\ldots\times \sccp(\fm_k)\] and hence for $\fm'\in \Msi^{ap}$ a second multisegment
    \[\Wc(\sccp(\fm)\times\sccp(\fm'),\psi)=\sccp(\fm*\fm
').\] An analogous claim holds for $\sccpu$ and if $\fn\lp\fm$ then $\sccpu(\fn)\subseteq\sccpu(\fm)$.

\end{lemma}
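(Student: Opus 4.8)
The plan is to reduce everything to the single-cuspidal-line case handled in \Cref{C:assprod} and \Cref{L:sccorder}, using the multiplicativity of Whittaker models under parabolic induction together with the irreducibility statement of \Cref{L:irred}. First I would prove the identity $\Wc(\scc(\fm))\cong \scc(\fm_1)\times\ldots\times\scc(\fm_k)$. By the very definition $\scc(\fm)=\Wc(\Sc(\fm_1)\times\ldots\times \Sc(\fm_k))$, so it suffices to observe that the product $\scc(\fm_1)\times\ldots\times\scc(\fm_k)$ is \emph{already} of Whittaker type and, in fact, that one may drop the outer $\Wc(-)$. Since each $\scc(\fm_i)$ is a subrepresentation of a product $\rho_{i,1}\times\ldots\times\rho_{i,n_i}$ with all $\rho_{i,j}\in\ZZ[\rho_i]$, and the $\rho_i$ lie in pairwise different cuspidal lines, \Cref{L:irred} shows that for any irreducible subquotients $\pi_i$ of $\scc(\fm_i)$ the product $\pi_1\times\ldots\times\pi_k$ is irreducible; hence $\scc(\fm_1)\times\ldots\times\scc(\fm_k)$ is a successive extension of irreducibles, its socle is irreducible and appears with multiplicity one, so it is of Whittaker type and equals its own Whittaker model. (Alternatively, using $\Wc(\pi_1\times\pi_2,\psi)=\Wc(\Wc(\pi_1,\psi)\times\Wc(\pi_2,\psi),\psi)$ repeatedly reduces the claim to $\scc(\fm_i)=\Wc(\Sc(\fm_i),\psi)$, which is how $\scc(\fm_i)$ was defined.) This gives the first displayed isomorphism.

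Next, for the product formula $\Wc(\scc(\fm)\times\scc(\fm'))=\scc(\fm*\fm')$, I would write $\fm*\fm'=\fm_1*\fm_1'+\ldots+\fm_k*\fm_k'$ by definition of the extended product, and then compute
\[
\scc(\fm*\fm')=\Wc\big(\scc(\fm_1*\fm_1')\times\ldots\times\scc(\fm_k*\fm_k')\big)
=\Wc\big(\Wc(\scc(\fm_1)\times\scc(\fm_1'))\times\ldots\big),
\]
where the second equality is \Cref{L:sccorder} (the product formula for a single cuspidal line, valid on all of $\Ms(\rho_i)$, not just the aperiodic part). By uniqueness of the Whittaker model the nested $\Wc(-)$'s collapse, and after reordering the factors — which is permitted because factors attached to different cuspidal lines commute up to isomorphism by \Cref{L:irred} and the commutativity of parabolic induction on the Grothendieck group — one obtains $\Wc(\scc(\fm_1)\times\scc(\fm_1')\times\ldots\times\scc(\fm_k)\times\scc(\fm_k'))=\Wc(\scc(\fm)\times\scc(\fm'))$, using the first part of the lemma to recombine the $\fm_i$'s and the $\fm_i'$'s.

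Finally, for the monotonicity statement $\fn\lp\fm\Rightarrow\scc(\fn)\subseteq\scc(\fm)$: the degeneration order on $\Ms$ respects the decomposition into cuspidal lines, so $\fn\lp\fm$ with $\fm=\fm_1+\ldots+\fm_k$ forces $\fn=\fn_1+\ldots+\fn_k$ with $\fn_i\lp\fm_i$ inside $\Ms(\rho_i)$ for each $i$ (elementary operations only link segments sharing a cuspidal line). Then \Cref{L:sccorder} gives $\scc(\fn_i)\subseteq\scc(\fm_i)$ for each $i$, and applying \Cref{L:instrucwhit}-style functoriality of the map $(\pi_1,\ldots,\pi_k)\mapsto\Wc(\pi_1\times\ldots\times\pi_k)$ — more precisely, the fact that an inclusion of subrepresentations $\scc(\fn_i)\hra\scc(\fm_i)$ induces an inclusion on the level of (Whittaker models of) the products, which holds because the product of the $\scc(\fm_i)$ is multiplicity-free enough by \Cref{L:irred} that subrepresentations of the product correspond to products of subrepresentations — yields $\scc(\fn)\subseteq\scc(\fm)$. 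The main obstacle I anticipate is this last step: one must be careful that passing from inclusions $\scc(\fn_i)\subseteq\scc(\fm_i)$ on each line to an inclusion of the induced products is genuinely exact and injective; this is where the irreducibility of cross-line products from \Cref{L:irred} (guaranteeing that $\scc(\fm_1)\times\ldots\times\scc(\fm_k)$ has a well-behaved, essentially ``tensor-factorized'' subrepresentation lattice) does the real work, rather than any new computation.
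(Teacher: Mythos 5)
Your proof of the first isomorphism is essentially the paper's own argument: by \Cref{T:msZ} and \Cref{L:irred} the product $\pi_1\times\ldots\times\pi_k$ of the socles is irreducible and generic, hence is the (multiplicity-one) socle of $\scc(\fm_1)\times\ldots\times\scc(\fm_k)$, and uniqueness of the Whittaker model then forces the surjection onto $\Wc(\scc(\fm))$ to be an isomorphism. The paper's proof records only this step and treats the product formula and the monotonicity in $\lp$ as immediate consequences, so your componentwise reduction for those parts (using \Cref{C:assprod} and \Cref{L:sccorder} line by line, noting that elementary operations never mix cuspidal lines) is consistent with, if slightly more explicit than, the paper.
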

\begin{proof}
    Note that there exists a non-zero map
    \[\sccp(\fm_1)\times\ldots\times \sccp(\fm_k)\ra \Wc(\sccp(\fm),\psi).\]
    Let $\pi$ be the unique irreducible subrepresentation of the latter representation. It is then enough to show that $\pi$ is the unique subrepresentation of the former representation. To see this note that by \Cref{T:msZ} and \Cref{L:irred}, the representation $\pi_1\times\ldots \times \pi_k$ is irreducible, where $\pi_i$ is the unique irreducible subrepresentation of $\sccp(\fm_i)$. Since $\pi_1\times\ldots \times \pi_k$ is generic, it has to be isomorphic to $\pi$. Thus $\pi$ is a subrepresentation of \[\sccp(\fm_1)\times\ldots\times \sccp(\fm_k).\]
    For the second point, note that if $\fm$ and $\fm'$ are in different cuspidal lines, then we just proved that $\Wc(\sccp(\fm)\times \sccp(\fm'),\psi)=\Wc(\sccp(\fm+\fm'),\psi)=\Wc(\sccp(\fm'+\fm),\psi)=\Wc(\sccp(\fm')\times \sccp(\fm),\psi)$. From this the second claim follows.
    Finally, if $\fn\lp\fm$ with $\fm_i,\fn_i\in \Ms(\rho_i)$ as above, we have by definition $\fn\lp\fm$ if and only if $\fn_i\lp\fm_i$ for all $i$. From the first point and \Cref{L:sccorder} it then follows that there exists an injection
    \[\sccpu(\fn)\hra\sccpu(\fm),\] which by the uniqueness of the Whittaker model must be an inclusion.

    The claim for $\sccpu$ follows the exact same pattern.
\end{proof}
\section{Standard modules}
For the rest of this section, we fix an additive character $\psi$.
\begin{definition}
    Two maps $\tc_\psi,\tc_{\psi^{-1}}\colon\Ms\ra \Rep$ are called
    of Whittaker type if they satisfy the following.
    \begin{enumerate}
        \item For each $\fm\in \Ms$, $\tc_\psi(\fm)$ is a representation of $G_{\deg(\fm)}$ and contained in $\Rep_{W,\psi}$
        \item $\tc_\psi(\fm)^{\mathfrak{c}}=\tc_{\psi^{-1}}(\fm^\lor)$.
                \item If $\fn\lp\fm$ then $\tc_\psi(\fn)\subseteq \tc_\psi(\fm)$.
    \end{enumerate}
    Since by (2), $\tc_{\psi^{-1}}$ is determined by $\tc_\psi$, we will usually omit $\tc_{\psi^{-1}}$.
    
    The map $\tc_\psi$ is called $L$-\emph{standard} if it is of Whittaker type and for all $\fn,\fm\in \Ms^{ap}$ \[L(X,\tc_\psi(\fn),\tc_{\psi^{-1}}(\fm))=L(X,\Cc'(\fn),\Cc'(\fm)).\]
   Finally, an $L$-standard map $\tc$ is called \emph{standard}
    if it moreover satisfies the following.
    \begin{enumerate}
        \item For each $\fm\in \Ms^{ap}$ there exists a non-zero map $\tc_\psi(\fm)\sra \langle\fm\rangle$. \item For each $\fm\in \Ms^{ap}$, $\dim_R\Ho_{\Delta G_{\deg(\fm)}}(\tc_\psi(\fm)\otimes \tc_{\psi^{-1}}(\fm^\lor),R)=1$.
        \item The multiplicity of $\langle\fm\rangle$ in $\tc_\psi(\fm)$ is $1$.
    \end{enumerate}
Replacing $\Ms$ by $\Msi$ in the above definition, we obtain the notion of $\square$-Whittaker type, $\square$-L-standard and $\square$-standard.
\end{definition} 
It is easy to see that if $\tc_\psi$ is standard and $\ell\neq 2$, the up to a scalar unique non-zero map $\tc_\psi(\fm)\ra \tc_{\psi^{-1}}(\fm^\lor)^\lor$ factors through $\langle \fm\rangle$.
Indeed, since $\ell\neq 2$, we have that $\langle\fm\rangle\cc\cong\langle\fm\rangle^\lor$ is a quotient of $\tc_{\psi^{-1}}(\fm^\lor)$, and since Aubert-duality commutes with taking duals, we have a non-zero map
\[\tc_\psi(\fm)\sra\langle\fm\rangle\hra \tc_{\psi^{-1}}(\fm^\lor)^\lor.\]
For $\tc$ a standard map, the representations $\tc_{\psi^{-1}}(\fm)$ with $\fm$ aperiodic are called \emph{standard modules}.
\subsection{ Standard modules over \texorpdfstring{$\ql$}{Ql}}\label{S:smql}
For this section we set $R=\ql$. For an arranged form $(\De_1,\ldots,\De_k)$ of a multisegment $\fm$ we denote \[T_\psi(\fm)\coloneq \Wc(\langle\De_1\rangle)\times\ldots\times\Wc(\langle\De_k\rangle,\psi),\] whose isomorphism type is independent of the arranged order, see for example \cite[Theorem 6.1]{ZelII}. If $\fm$ is moreover integral we write
\[T_\psi^{en}(\fm)\coloneq\Wc^{en}(\langle\De_1\rangle)\times\ldots\times\Wc^{en}(\langle\De_k\rangle,\psi).\]
\begin{theorem}\label{P:smql}
 The map $\Sc_\psi\colon \Msq\ra \Rep$ given by $\fm\mapsto \Wc(T(\fm),\psi)$ is standard.
\end{theorem}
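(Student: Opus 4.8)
The statement is that $\Sc_\psi\colon\Msq\ra\Rep$, $\fm\mapsto\Wc(T(\fm))$, is a \emph{standard} map, so we must check the three axioms of Whittaker type, then $L$-standardness, then the three extra properties. Throughout one uses that for $R=\ql$ every multisegment is aperiodic, so $\Msq=\Ms^{ap}$, and that each segment representation $\langle\De\rangle$ is essentially square-integrable, hence generic. Property~(1) of Whittaker type is then immediate: $T(\fm)=\langle\De_1\rangle\times\cdots\times\langle\De_k\rangle$ is a product of generic representations, hence of Whittaker type, $\Wc(T(\fm))$ lies in $\Rep_{W,\psi,\deg(\fm)}$, and $\deg$ is additive; independence of the arranged form follows from \Cref{L:ordersubsubquot}, since neighbouring unlinked segments give an irreducible, hence commuting, factor. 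For property~(2) combine $\Wc(\pi,\psi)^{\mathfrak c}=\Wc(\pi^{\mathfrak c},\psi^{-1})$, the identity $(\pi_1\times\pi_2)^{\mathfrak c}=\pi_2^{\mathfrak c}\times\pi_1^{\mathfrak c}$, and $\langle\De\rangle^{\mathfrak c}=\langle\De\rangle^\lor=\langle\De^\lor\rangle$ (from irreducibility together with $\Z(\fm)^\lor\cong\Z(\fm^\lor)$ and the definition of $\langle-\rangle$); the only combinatorial input is that $\De$ precedes $\De'$ if and only if $\De'^\lor$ precedes $\De^\lor$, so $(\De_k^\lor,\dots,\De_1^\lor)$ is an arranged form of $\fm^\lor$, giving $\Sc_\psi(\fm)^{\mathfrak c}=\Sc_{\psi^{-1}}(\fm^\lor)$. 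Also note, for use below, that the $L$-, $\epsilon$- and $\gamma$-factors of a Whittaker-type representation depend only on its Whittaker model as a subspace of $\Wc(\psi)$, so $L(X,\Sc_\psi(\fm),\pi')=L(X,T(\fm),\pi')$ and all $L$-factor computations may be carried out with the honest standard modules $T(\fm)$.

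The main structural point is property~(3) of Whittaker type, the monotonicity $\fn\lp\fm\Rightarrow\Sc_\psi(\fn)\subseteq\Sc_\psi(\fm)$. It suffices to treat a single elementary operation, replacing linked segments $\De=[a,b]_\rho$, $\De'=[a',b']_\rho$ ($\De$ preceding $\De'$) by the unlinked pair $[a,b']_\rho,[a',b]_\rho$. For the two-segment multisegment $\fm=\De+\De'$ one has $T(\fm)=\langle\De'\rangle\times\langle\De\rangle$, which has length two, with cosocle $\langle\fm\rangle$ and socle $\langle[a,b']_\rho+[a',b]_\rho\rangle=\langle[a,b']_\rho\rangle\times\langle[a',b]_\rho\rangle=\langle\fn\rangle$; this socle is irreducible (it is a value of $\langle-\rangle$) and of Whittaker type, hence generic, hence the unique generic constituent of $T(\fm)$. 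Consequently $\ker(T(\fm)\to\Wc(\psi))=0$, so $\Sc_\psi(\fm)\cong T(\fm)$ has socle $\cong\langle\fn\rangle$, and by multiplicity one of generic representations in $\Wc(\psi)$ this socle coincides with the unique copy $\Wc(\langle\fn\rangle)=\Sc_\psi(\fn)$; thus $\Sc_\psi(\fn)\subseteq\Sc_\psi(\fm)$. Reducing the general elementary operation to this two-segment case is the genuinely delicate step — $T(\fm)$ is not symmetric in its factors and the two affected segments need not become adjacent in any arranged form — and I would argue by induction on $\deg(\fm)$ (and along $\lp$ for fixed degree), peeling off a rightmost factor $\langle\De_k\rangle$ and using exactness of parabolic induction together with \Cref{L:derind}. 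I expect this reduction to be the main obstacle.

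For $L$-standardness we must show $L(X,T(\fn),T(\fm))=L(X,\Cc'(\fn),\Cc'(\fm))$. Each $\langle\De_i\rangle$, being essentially square-integrable, is a subrepresentation of a product of the cuspidals $\rho\abs^{c}$ ($c$ over the support of $\De_i$) in a suitable order, so by \Cref{L:linc} the $\gamma$-factor $\gamma(X,T(\fn),T(\fm),\psi)$ factors as $\prod_{i,j}\gamma(X,\langle\De_i\rangle,\langle E_j\rangle,\psi)$ (writing $\fm=E_1+\cdots+E_l$) and further into a product of cuspidal $\gamma$-factors. Passing from $\gamma$- to $L$-factors requires the classical input that the Rankin--Selberg local integral for a pair of essentially square-integrable representations realises the maximal pole — the computation of \cite{JacPiaSha} over $\CC$, valid over $\ql$ by the same argument, or via \cite{KurMat} — which identifies $L(X,\langle\De\rangle,\langle E\rangle)$ with the explicit ``diagonal'' product of cuspidal $L$-factors, matching the definition of $L(X,\Cc'(\De),\Cc'(E))$ once the cuspidal case $L(X,\rho,\rho')$ is checked. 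Multiplying over all $i,j$ gives the claim; this computation, and in particular the agreement with the Kurinczuk--Matringe normalisation, is the main computational content of the theorem.

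Finally, for the three properties of a standard map, the Langlands--Zelevinsky classification (\cite{ZelII}, transferred to $\ql$), together with \Cref{L:ordersubsubquot} and multiplicity one, gives that $T(\fm)$ has $\langle\fm\rangle$ as its unique irreducible quotient, occurring with multiplicity one in $T(\fm)$. Since $T(\fm)$ has a generic, hence nonzero, constituent, $\ker(T(\fm)\to\Wc(\psi))$ is a proper subrepresentation and so is contained in the unique maximal subrepresentation of $T(\fm)$; hence $T(\fm)\sra\langle\fm\rangle$ descends to $\Sc_\psi(\fm)\sra\langle\fm\rangle$ (property~(1)), and multiplicity one passes to the quotient $\Sc_\psi(\fm)$ (property~(3)). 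For property~(2), identify $\Ho_{\Delta G_{\deg(\fm)}}(\Sc_\psi(\fm)\otimes\Sc_{\psi^{-1}}(\fm^\lor),R)\cong\Ho_{G_{\deg(\fm)}}(\Sc_\psi(\fm),(\Sc_{\psi^{-1}}(\fm^\lor))^\lor)$. By property~(2) of Whittaker type, $\Sc_{\psi^{-1}}(\fm^\lor)=\Sc_\psi(\fm)^{\mathfrak c}$, whose cosocle is $\langle\fm\rangle^{\mathfrak c}=\langle\fm^\lor\rangle$ with multiplicity one, so the target $(\Sc_\psi(\fm)^{\mathfrak c})^\lor$ has socle $\langle\fm\rangle$ with multiplicity one, while the source has cosocle $\langle\fm\rangle$ with multiplicity one. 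Hence any nonzero morphism factors as $\Sc_\psi(\fm)\sra\langle\fm\rangle\hra(\Sc_\psi(\fm)^{\mathfrak c})^\lor$, the space of such morphisms is one-dimensional, and composing the surjection of property~(1) with the corresponding injection produces a nonzero element; therefore the dimension equals one.
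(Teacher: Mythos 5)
Most of what you write is sound: the verification of properties (1)–(2) of Whittaker type, the unique irreducible quotient $\langle\fm\rangle$ with multiplicity one, and the socle/cosocle argument for $\dim\Ho_{\Delta G}(\Sc_\psi(\fm)\otimes\Sc_{\psi^{-1}}(\fm^\lor),R)=1$ are all correct, and deferring the $L$-factor identity to the Rankin--Selberg computation for products of essentially square-integrable representations is exactly what the paper does -- its entire proof of \Cref{P:smql} consists of citing \cite[Theorem 8.2]{JacPiaSha} for the $L$-factors and \cite[\S 6, \S 7, \S 9]{ZelII} for the remaining properties. So you are essentially reconstructing the content of those references rather than taking a different route.

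There is, however, a genuine gap, and you flag it yourself: property (3) of Whittaker type, $\fn\lp\fm\Rightarrow\Sc_\psi(\fn)\subseteq\Sc_\psi(\fm)$, is only established when $\fm$ consists of the two segments involved in the elementary operation. Your two-segment argument (length two, generic socle, uniqueness of the generic embedding into $\Wc(\psi)$) is fine, but the reduction of the general case to it is not supplied, and it is not a routine induction. Two concrete obstacles: first, the two linked segments need not be adjacent in any arranged form, and commuting non-adjacent factors changes $T(\fm)$ as a representation, so the whole argument must be run inside $\Wc(\psi)$; second, an inclusion of Whittaker models $\Wc(\sigma)\subseteq\Wc(\tau)$ does not formally give $\Wc(\Wc(\sigma)\times\pi)\subseteq\Wc(\Wc(\tau)\times\pi)$ -- one must check that the unique Whittaker functional of $\Wc(\tau)\times\pi$ restricts non-trivially to the induced subrepresentation $\Wc(\sigma)\times\pi$, which is precisely the kind of verification the paper carries out elsewhere (compare \Cref{L:instrucwhit}, \Cref{L:serrep}, the induction in \Cref{L:sccorder}, and the manipulations with arranged forms in \Cref{L:genextql}). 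As written, ``peel off a rightmost factor and use \Cref{L:derind}'' is a plan rather than a proof; \Cref{L:derind} gives containment in a derivative, not an inclusion of Whittaker models of the induced representations. Either carry out this induction explicitly (the commutation arguments of \Cref{L:genextql} over $\ql$ show how to move a cuspidal past unlinked factors and can be adapted), or do as the paper does and invoke the classical references for this inclusion of Whittaker models of standard modules over a field of characteristic zero.
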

\begin{proof}
    For the computation of $L$-factors see \cite[Theorem 8.2]{JacPiaSha}. The other properties are proved in \cite[§6, §7, §9]{ZelII}.
\end{proof}
We let $\Sc_\psi(\fm)^{en}$ be the subspace of $\zl$-valued function in $\Sc_\psi(\fm)$.
\begin{lemma}\label{L:genextql}
For all $\fm\in \Msq$, $\Sc_{gen,\psi}(\fm)=\Sc_\psi(\fm)$.
\end{lemma}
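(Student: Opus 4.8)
The plan is to prove the equality for aperiodic multisegments (which over $\ql$ is all of them) by reducing both sides to a product of Whittaker models of segment representations and matching them term by term. Since $\mathrm{char}(R)=0$, every multisegment is aperiodic, so $\scc$ is defined on all of $\Msq$ via the generic extension product, and $\Sc$ is defined via an arranged form $(\De_1,\ldots,\De_k)$ of $\fm$ by $\Sc(\fm)=\Wc(\langle\De_1\rangle\times\ldots\times\langle\De_k\rangle)$. The first step is to recall from \Cref{T:serre} (and the recursive description of $\fm_{gen}$) that any aperiodic $\fm\in\Ms(\rho)$ can be written as $\fm=\rho\abs^{i_1}*\ldots*\rho\abs^{i_m}$ for suitable indices, so $\scc(\fm)=\Wc(\rho\abs^{i_1}\times\ldots\times\rho\abs^{i_m})$; one then has to upgrade this from the single-line case to the case of a general multisegment, which is handled by \Cref{L:sccdiffsupseg}, reducing everything to multisegments supported on a single cuspidal line $\ZZ[\rho]$.

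The key step is then to show that over $\ql$, for any segment $\De=[a,b]_\rho$, one has $\Wc(\langle\De\rangle)=\scc(\De)$, i.e. that the generic/irreducible representation $\langle\De\rangle$ attached to a single segment is exactly $\Wc(\rho\abs^a\times\ldots\times\rho\abs^b)$. This is essentially the statement that over $\ql$ the standard module of a segment is the full induced representation $\rho\abs^a\times\ldots\times\rho\abs^b$ and $\langle\De\rangle$ is its (unique generic) Langlands quotient, together with the fact that $\langle\De\rangle=\Z([a,b]_\rho)$ up to the involution — here I would invoke \Cref{P:smql} (which says $\Sc$ is standard, hence $\Sc(\De)=\Wc(\langle\De\rangle)$ by definition of $T$) and \Cref{L:sccorder}/\Cref{C:assprod} to identify $\scc(\De)$ with the same induced Whittaker space, using that $[a,b]_\rho=\rho\abs^a*\rho\abs^{a+1}*\ldots*\rho\abs^b$ in the generic extension monoid (the generic extension of the simples along a "straight" word is the indecomposable, by the recursion for $\fm_{gen}$). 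Given this, for general $\fm$ with arranged form $(\De_1,\ldots,\De_k)$ one writes $\fm=\De_1*\ldots*\De_k$ — this is where arrangedness is used: in an arranged form no $\De_i$ precedes a later $\De_j$, which is exactly the condition guaranteeing that the generic extension of the $\langle\De_i\rangle$-pieces is the direct sum $\lambda(\fm)$ rather than a proper degeneration — and then invokes associativity of $*$ on $\Rep_{W,\psi}$ (uniqueness of Whittaker models) and \Cref{C:assprod}/\Cref{L:sccdiffsupseg} to conclude
\[
\scc(\fm)=\Wc\big(\scc(\De_1)\times\ldots\times\scc(\De_k)\big)=\Wc\big(\Wc(\langle\De_1\rangle)\times\ldots\times\Wc(\langle\De_k\rangle)\big)=\Wc(T(\fm))=\Sc(\fm).
\]

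The main obstacle I expect is verifying that an arranged form of $\fm$ really does compute $\fm$ under the generic extension product, i.e. $\De_1*\ldots*\De_k=\fm$ when $(\De_1,\ldots,\De_k)$ is arranged. The inequality $\De_1*\ldots*\De_k\lp\fm$ is automatic from \Cref{L:genorder} (degenerations only shrink), so the content is the reverse: that no further degeneration occurs, equivalently that $\mathrm{Ext}^1_Q$ between consecutive pieces vanishes in the relevant sense. This should follow from the precedence condition defining arranged forms together with the explicit $\mathrm{Ext}$-computations for representations of the linear/cyclic quiver (two indecomposables $\lambda(\De),\lambda(\De')$ have a nonsplit extension degenerating the pair precisely when $\De$ precedes $\De'$), and I would cite \cite{Rin} or \cite{ZelII} for this; alternatively one can bypass it entirely by induction on $k$ using \Cref{L:sccorder} (the part giving $\Wc(\scc(\fm')\times\rho)=\scc(\fm)$ when $\fm'*\rho=\fm$) peeling off one $\rho\abs^i$ at a time along a word realizing the arranged form, so that the induction hypothesis plus \Cref{P:smql} close the argument without ever computing $\mathrm{Ext}$ groups directly.
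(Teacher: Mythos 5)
Your plan is correct, but it takes a genuinely different route from the paper. The paper argues by induction on $\deg(\fm)$, peeling off a single cuspidal $\rho$ from a carefully chosen longest segment $[0,b]_\rho$ of $\fm$: it chooses an arranged form of $\fm'=\fm-[0,b]_\rho+[1,b]_\rho$ whose initial segments are unlinked with $[0,0]_\rho$, commutes $\rho$ past them inside the Whittaker model, absorbs it into $\langle[1,b]_\rho\rangle$ via the single-segment identity $\Wc(\rho\times\langle[1,b]_\rho\rangle)=\Wc(\langle[0,b]_\rho\rangle)$ (which it gets from the surjection $\rho\times\langle[1,b]_\rho\rangle\sra\langle[0,b]_\rho\rangle$ of \cite[Proposition 9.5]{ZelII} plus uniqueness of the Whittaker functional), and recognizes the result as an arranged form of $\fm$, so no quiver-theoretic input beyond \Cref{C:assprod} is needed. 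You instead factor the whole multisegment at once: you show $\De_1*\ldots*\De_k=\fm$ for an arranged form, which indeed holds because $\mathrm{Ext}^1_Q(\lambda(\De_i),\lambda(\De_j))\neq 0$ for the $A_\infty$ quiver exactly when $\De_i$ precedes $\De_j$, so all relevant generic extensions are split; you then conclude by multiplicativity of $\scc$ (\Cref{C:assprod}, \Cref{L:sccdiffsupseg}) and the single-segment case. This is a valid and arguably more conceptual argument, but it imports an $\mathrm{Ext}$-computation (and a convention check: $M*N$ is built from extensions of $M$ by $N$, so the vanishing you need is precisely $\mathrm{Ext}^1(\lambda(\De_i),\lambda(\De_j))=0$ for $i<j$, matching the arranged condition) that the paper's cuspidal-by-cuspidal induction avoids; conversely the paper's proof stays entirely within the representation-theoretic lemmas already established. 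Two points you should make explicit if you write this up: the single-segment equality $\Wc(\rho\abs^a\times\ldots\times\rho\abs^b)=\Wc(\langle[a,b]_\rho\rangle)$ is not just the Langlands classification, it is the factorization of the Whittaker map through the generic quotient by uniqueness of the Whittaker functional (this is exactly the paper's inductive step), and your reduction to a single cuspidal line also needs the corresponding (easy) factorization of $\Sc(\fm)$ into line-blocks, which follows since segments on different lines are unlinked.
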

\begin{proof}
We argue by induction on $\deg(\fm)$. 
We first treat the chase where $\fm$ is one segment $[0,b]_\rho$. Then by \cite[Proposition 9.5]{ZelII} and Frobenius reciprocity $\rho\times \langle[1,b]_\rho\rangle\sra\langle[0,b]_\rho\rangle$, which is a generic representation by the uniqueness of the Whittaker model. Thus $\Wc(\rho\times \langle[1,b]_\rho\rangle,\psi)=\Wc(\langle[0,b]_\rho\rangle,\psi)$. By induction on $b$ the left hand side equals to $\Sc_{gen,\psi}([0,b]_\rho])$ by \Cref{L:sccorder}.

We come to the general case.
Write $\fm=\rho*\fm'$ as follows. Let $[0,b]_\rho, \rho\in\cusp$ be a longest segment in $\fm$ such all segments $\De$ in $\fm$ with $a_\rho(\De)=1$ satisfy $l(\De)<b$. Then we obtain with $\fm'=\fm-[0,b]+[1,b]_\rho$ the equality $\rho*\fm'=\fm$. Moreover, one can choose an arranged form $(\De_1,\ldots,\De_k)$ of $\fm'$ such that for $j<i$, where $i$ is the minimal $i$ such that $\De_i=[1,b]_\rho$, either $\De_j$ has cuspidal support not intersecting $\ZZ[\rho]$ or $a_\rho(\De_j)$ is not equal to $1$. In any case, $[0,0]+\De_j$ is unlinked for $j<i$.
We recall that by the uniqueness of the Whittaker model, we have for three representations $\pi_1,\pi_2,\pi_3$ of Whittaker type 
\[\Wc(\pi_1\times\pi_2\times\pi_3,\psi)=\Wc(\pi_1\times \Wc(\pi_2,\psi)\times\pi_3,\psi).\]
Thus by the case of a single segment
\[\Wc(\rho\times \langle\De_1\rangle\times\ldots\times\langle\De_k\rangle,\psi)=\]\[=\Wc(\langle\De_1\rangle\times\ldots\times\rho\times \langle\De_i\rangle\times\ldots\times\langle\De_k\rangle,\psi)=\Wc(\langle\De_1\rangle\times\ldots\times\Wc(\rho\times \langle\De_i\rangle,\psi)\times\ldots\times\langle\De_k\rangle,\psi)=\]\[=\Wc(\langle\De_1\rangle\times\ldots\times\langle{}^+\De_i\rangle\times\ldots\times\langle\De_k\rangle,\psi).\]
It is easy to see that $(\De_1,\ldots,{}^+\De_i,\ldots,\De_k)$ is an arranged form of $\fm$, hence the right hand side is equal to $\Sc_{\psi}(\fm)$. By induction and \Cref{L:sccorder} the left-hand side is equal to $\Sc_{gen,\psi}(\fm)$.
\end{proof}
\subsection{ Standard modules over \texorpdfstring{$\fl$}{Fl}}\label{S:smfl}
We come to our definition of standard modules over $\fl$.
Firstly, we recall from the introduction the following. If $\tfm$ is a lift of $\fm \in \Msl$, \emph{cf.} \Cref{S:intstruc}, we note in \Cref{T:Whitint} that $\Sc_\psi(\tfm)$ can be equipped with a natural integral structure, whose reduction $\mod\ell$ is again of Whittaker type and we denoted it by $\overline{\Sc_\psi(\tfm)}$. For $\fm\in\Msl$ we then defined in the introduction the intersection \[\Sc_\psi(\fm)\coloneq \bigcap_{\tfm\text{ lift of }\fm}\overline{\Sc_\psi(\tfm)}\] in the space of Whittaker functions.
Let us note that it can easily be seen that the representation is non-zero if $\fm\in \Msi$. Indeed, each $\overline{\Sc_\psi(\tfm)}$ contains with multiplicity one and as a unique subrepresentation the degenerate representation $\Z(\fs)$, where $\fs=\cus(\fm)$. Since $\dim_R\Ho_{G_n}(\Z(\fs),\Wc(\psi))=1$, $\Z(\fs)$ is a subrepresentation of $\Sc_\psi(\fm)$.
\begin{definition}
    Let $\tc_\psi$ a map of Whittaker type. We call $\tc$ \emph{extending} if for all $\fm\in \Msi$ and $\rho\in \scu$
    \[\Wc(\rho\times\tc_\psi(\fm),\psi)\subseteq\tc_\psi(\rho*\fm),\, \Wc(\tc_\psi(\fm)\times\rho,\psi)\subseteq\tc_\psi(\fm*\rho)\]
\end{definition}
\begin{lemma}\label{L:ext}
    The maps $\Scp$ and $\sccpu$ are extending.
\end{lemma}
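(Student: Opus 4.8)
For $\scc$ the assertion is immediate from \Cref{L:sccdiffsupseg}: since $\rho\in\scu$ we have $[0,0]_\rho\in\Msi$ with $\scc([0,0]_\rho)=\Wc(\rho)=\rho$, so specialising the second multisegment in \Cref{L:sccdiffsupseg} to $[0,0]_\rho$ yields $\Wc(\rho\times\scc(\fm))=\scc(\rho*\fm)$ and, by the symmetry of that identity, $\Wc(\scc(\fm)\times\rho)=\scc(\fm*\rho)$ — equalities, in fact. For $R=\ql$ the map $\Sc$ equals $\scc$ by \Cref{L:genextql}, and there $\Msi=\Msq$ and $\scu=\cusp$, so $\Sc$ is extending for the same reason. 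The substantive case is $\Sc$ over $\fl$. Fix $\fm\in\Msi$ and $\rho\in\scu$. Applying $(-)^\mathfrak{c}$ it is enough to prove $\Wc(\rho\times\Sc(\fm))\subseteq\Sc(\rho*\fm)$, and since $\Sc(\rho*\fm)=\bigcap_{\tfn}\overline{\Sc(\tfn)}$ with $\tfn$ running over the lifts of $\rho*\fm$, it suffices to fix one such lift $\tfn$ and show $\Wc(\rho\times\Sc(\fm))\subseteq\overline{\Sc(\tfn)}$.

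The key combinatorial input is that every lift $\tfn$ of $\rho*\fm$ can be written $\tfn=\trho*\tfm$ for suitable lifts $\trho$ of $\rho$ and $\tfm$ of $\fm$. Reducing to the cuspidal line of $\rho$ and unwinding the recursive description of $i+\fm$ from \Cref{S:genext}, $\rho*\fm$ arises from $\fm$ either by adjoining a new length-one segment at vertex $0$, or by replacing the longest segment $\De$ with $a_\rho(\De)=1$ by ${}^+\De$; correspondingly $\tfn$ has a distinguished segment $\De_0$ whose left endpoint is $\equiv 0\bmod o(\rho)$ on its line, and one sets $\trho$ to be the length-one segment at that endpoint and $\tfm\coloneq\tfn-\De_0+{}^-\De_0$ (deleting $\De_0$ instead, in the adjoined case). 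One then checks $\trho*\tfm=\tfn$ directly from the recursive description over $\ql$, where no reduction modulo $o(\rho)$ intervenes; the only delicate point, a tie for the longest segment at the relevant vertex, is harmless because a segment is determined by its left endpoint and its length, so the generic extension is independent of the choice. For $\fm$ supported on several cuspidal lines this reduces to the single-line computation.

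Granting this, we have over $\ql$ that $\Sc(\tfn)=\Sc(\trho*\tfm)=\Wc(\trho\times\Sc(\tfm))$ by \Cref{L:sccdiffsupseg} and \Cref{L:genextql}. By \Cref{L:instrucwhit} the natural map $\Wc(\trho)\times\Sc(\tfm)\to\Wc(\trho\times\Sc(\tfm))=\Sc(\tfn)$ respects the integral structures, and as $\overline{\Wc(\trho)}=\rl(\trho)=\rho$ it induces a non-zero map of $\fl$-representations $p\colon\rho\times\overline{\Sc(\tfm)}\to\overline{\Sc(\tfn)}\subseteq\Wc(\psi)$. Since $\tfm$ is a lift of $\fm$, $\Sc(\fm)\subseteq\overline{\Sc(\tfm)}$ by definition, so $p$ restricts to $c\colon\rho\times\Sc(\fm)\to\overline{\Sc(\tfn)}$. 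Now $\Sc(\fm)$ is of Whittaker type (it lies between $\scc(\fm)$ and $\overline{\Sc(\tfm)}$, whose unique generic constituent it therefore shares), hence so is $\rho\times\Sc(\fm)$; thus, once $c\neq 0$, $c$ is a non-zero multiple of the Whittaker map of $\rho\times\Sc(\fm)$ and $\Wc(\rho\times\Sc(\fm))=\mathrm{im}(c)\subseteq\overline{\Sc(\tfn)}$. Intersecting over all lifts $\tfn$ of $\rho*\fm$ then gives $\Wc(\rho\times\Sc(\fm))\subseteq\Sc(\rho*\fm)$.

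It remains to check $c\neq 0$. The nested subrepresentations $\scc(\fm)\subseteq\Sc(\fm)\subseteq\overline{\Sc(\tfm)}$ of $\Wc(\psi)$ all have the same unique irreducible subrepresentation, a generic representation $\sigma$. Since $\rho$ and $\sigma$ are generic, $\rho\times\sigma$ is of Whittaker type and its socle $\sigma'$ is generic; as $\rho\times\sigma\subseteq\rho\times\overline{\Sc(\tfm)}$ and the latter is of Whittaker type, $\sigma'$ is its unique generic Jordan--Hölder constituent. On the other hand $p$ is a non-zero map from the Whittaker-type representation $\rho\times\overline{\Sc(\tfm)}$ into $\Wc(\psi)$, hence the Whittaker map up to a scalar, so $\ker p$ contains no generic constituent; therefore $p$ is injective on the subrepresentation $\sigma'\hookrightarrow\rho\times\sigma\hookrightarrow\rho\times\Sc(\fm)$, and $c\neq 0$. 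The main obstacle is the combinatorial claim of the second paragraph — identifying an arbitrary lift of $\rho*\fm$ with a generic extension of compatible lifts of $\rho$ and of $\fm$, while keeping track of cuspidal lines and of the non-uniqueness of the generic extension product; everything after that is bookkeeping with Whittaker models and integral structures.
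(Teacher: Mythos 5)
Your proposal is correct and follows essentially the same route as the paper: for $\scc$ it invokes the order/product results of Section 3.5 (the paper cites \Cref{L:sccorder}, you cite its multi-line consequence \Cref{L:sccdiffsupseg}), and for $\Sc$ over $\fl$ it rests on the same two ingredients the paper uses, namely that every lift of $\rho*\fm$ factors as $\trho*\tfm$ for lifts $\trho,\tfm$, and that the induced map respects integral structures via \Cref{L:instrucwhit}, with the conclusion drawn from uniqueness of the Whittaker model. You in fact supply more detail than the paper on the two points it leaves implicit (the verification of the lift-factorization and the non-vanishing of the reduced map on $\rho\times\Sc(\fm)$).
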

\begin{proof}
For $\sccpu$ the claim is a consequence of \Cref{L:sccorder}. The claim for $\Scp$ in the case $R=\fl$ follows by noting that on the one hand, for every lift $\tfm'$ of $\rho*\fm$, one can find lifts $\trho$ and $\tfm$ of $\rho$ and $\fm$ such that $\trho*\tfm=\tfm'$ and vice versa. By \Cref{L:instrucwhit} the surjective, and hence nonzero, map $\Wc(\trho,\psi)\times \Scp(\tfm)\sra \Wc(\rho\times\Scp(\tfm),\psi)=\Wc(\Scp(\trho*\tfm),\psi)$
reduces to a non-zero map
\[\overline{\Wc(\trho,\psi)}\times \overline{\Scp(\tfm)}\ra\overline{\Wc(\Scp(\trho*\tfm),\psi)}.\]
Taking the intersection in the space of Whittaker models we obtain a non-zero map \[\rho\times \Scp(\fm)\ra \Scp(\rho*\fm).\]
To see that the map on the intersection is non-zero, it suffices to note that the map \[\overline{\Wc(\trho,\psi)}\times \overline{\Scp(\tfm)}\ra\overline{\Wc(\Scp(\trho*\tfm),\psi)}\] does not vanish on the subrepresentation $\overline{\Wc(\trho,\psi)}\times \Z(\cus(\fm))$, where  $\Z(\cus(\fm))$ is the unique irreducible subrepresentation of $\overline{\Scp(\tfm)}$, which is independent of the lift $\tfm$. 

By the uniqueness of the Whittaker model we thus have that $\Wc(\rho\times \Scp(\tfm),\psi)\subseteq \Scp(\rho\times\fm).$ The other inclusion follows by the same argument.
\end{proof}
\begin{corollary}\label{C:sminc}
    The map $\sccpu\colon\Msli\ra\Rep$ is of $\square$-Whittaker type. For $\fm\in \Msl$, $\sccp(\fm)\subseteq \Scp(\fm)$ and $\sccpu(\fm)\subseteq \Scp(\fm)$.
\end{corollary}
\begin{proof}
We start with proving properties (1) and (2) of being a $\square$-Whittaker type for $\sccp$, which proves them for $\sccpu$.
    The first claim is true by construction. The second claim follows from the following observation. Let $\fm\in\Msli$ and $\rho\in\scu$. Then it follows from the construction of $\fm_{gen}$ that $(\rho*\fm)^\lor=\fm^\lor*\rho^\lor$.
    In particular, if $\fm$ is of the form $\rho_1*\ldots*\rho_k$ for suitable $\rho_i\in \scu$, then $\fm^\lor=\rho_k^\lor*\ldots*\rho_1^\lor$. Note that moreover $\Msli$ is non-zero only if $\ell\neq 2$  and hence $\rho_i\cc\cong \rho_i^\lor$.
    We thus have that
    \[\mathcal{S}_{gen,\psi}(\fm)^{\mathfrak{c}}=\Wc(\rho_1\times\ldots\times\rho_k,\psi)\cc=\Wc(\rho_k\cc\times\ldots\times\rho_1\cc,\psi^{-1})=\mathcal{S}_{gen,\psi^{-1}}(\fm^\lor)\] and hence also the second claim follows.
    The third claim for $\mathcal{S}_{gen,\psi}$ follows from \Cref{L:genorder}.

Finally, to prove $\sccpu(\fm)\subseteq \Scp(\fm)$ it suffices to prove that for $\fn$ aperiodic with $\fn\lp\fm$ we have $\sccpu(\fn)\subseteq \Scp(\fm)$. Moreover, by \Cref{L:sccdiffsupseg} it suffices to assume that $\fm\in \Ms(\rho)$. Write $\fn=\rho_1*\ldots*\rho_k$. By the uniqueness of the Whittaker model it suffices to give a non-zero map $\rho_1\times\ldots\times\rho_k\ra \Scp(\fm)$. 
By \Cref{L:descword} and the Geometric Lemma, we have that $\rho_1\otimes\ldots\otimes \rho_k$ appears in 
$r_{\overline{P_\alpha}}(\overline{\Sc_\psi(\tfm)})$, for any lift $\tfm$ of $\fm$, as a subquotient. Since $\fm\in \Ms(\rho)$, considering the central characters implies that it appears as a subrepresentation, which in turn implies that there exists a non-zero map $\rho_1\times\ldots\times\rho_k\ra \overline{\Sc_\psi(\tfm)}$. Since $\rho_1\times\ldots\times\rho_k$ is of Whittaker type, $
\Wc(\rho_1\times\ldots\times\rho_k,\psi)$ is contained in the intersection of the $\overline{\Sc_\psi(\tfm)}$, implying the claim.
\end{proof}
\begin{rem}
    If $\fm$ is a banal multisegment, see \cite{MinSecII}, \emph{i.e.} for each cuspidal representation $\rho$, the cuspidal support of $\fm$ does not contain $\rho\abs^k$ for some $k$, the same argument as in \Cref{L:genextql} shows that $\sccp(\fm)$ and $\Scp(\fm)$ agree.
\end{rem}
\begin{lemma}\label{C:extendingproperty}
    Assume $\tc_\psi$ is an extending map of Whittaker type. Let $\fm\in \Msi$ and $\De=[a,b]_\rho$, $\rho\in \scu_m$, a segment in $\fm$. Then $\tc_\psi(\fm-\De+{}^-\De)\subseteq \tc_\psi(\fm)^{(m)}$.
\end{lemma}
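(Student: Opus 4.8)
The plan is to deduce the statement from the extending hypothesis together with \Cref{L:longseg} and \Cref{L:derind}. Write ${}^-\De=[a+1,b]_\rho$, with the convention that ${}^-\De$ is empty (so that $\fm-\De+{}^-\De=\fm-\De$) when $l(\De)=1$, and set $\fn\coloneq\fm-\De+{}^-\De\in\Msi$. Since $\rho\in\scu_m$, the unramified twist $\rho\abs^a$ is again a cuspidal representation of $G_m$ with $\ZZ[\rho\abs^a]=\ZZ[\rho]$, hence $o(\rho\abs^a)=o(\rho)>1$ and $\rho\abs^a\in\scu_m$.

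First I would apply the extending property of $\tc$ to the multisegment $\fn$ and the cuspidal representation $\rho\abs^a\in\scu$, obtaining
\[\Wc(\rho\abs^a\times\tc(\fn))\subseteq\tc(\rho\abs^a*\fn).\]
Next, \Cref{L:longseg} applied on the $\ZZ[\rho]$-component of $\fm$ gives $\rho\abs^a*\fn=\rho\abs^a*(\fm-\De+{}^-\De)\lp\fm$; combined with the third axiom in the definition of a map of Whittaker type this yields $\tc(\rho\abs^a*\fn)\subseteq\tc(\fm)$, and therefore $\Wc(\rho\abs^a\times\tc(\fn))\subseteq\tc(\fm)$. I would then pass to $m$-th derivatives: the functor $(-)^{(m)}$ is exact, being built from the exact functors $\Psi^-$ and $\Phi^-$, so it preserves inclusions and we get $\Wc(\rho\abs^a\times\tc(\fn))^{(m)}\subseteq\tc(\fm)^{(m)}$. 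On the other hand, \Cref{L:derind} with $\pi_1=\rho\abs^a$ (so that $n_1=m$) and $\pi_2=\tc(\fn)$ gives $\Wc(\tc(\fn))\subseteq\Wc(\rho\abs^a\times\tc(\fn))^{(m)}$. Finally, since $\tc(\fn)\in\Rep_{W,\psi}$ is already a subrepresentation of $\Wc(\psi)$, the inclusion map is, up to scalar, the unique nonzero $G_{\deg(\fn)}$-morphism $\tc(\fn)\to\Wc(\psi)$, so it realizes the Whittaker model and $\Wc(\tc(\fn))=\tc(\fn)$. Chaining these inclusions gives $\tc(\fm-\De+{}^-\De)=\tc(\fn)\subseteq\tc(\fm)^{(m)}$, as wanted.

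I do not expect a serious obstacle; the proof is essentially bookkeeping with structural results already available. The two points that need a word of justification — and the only places where one must be careful — are: (i) \Cref{L:longseg} was formulated for multisegments supported on a single cuspidal line, so one should observe that for $\fm\in\Msi$ both the product $*$ and the degeneration order $\lp$ decompose along cuspidal lines and that $\rho\abs^a*(-)$ only modifies the $\ZZ[\rho]$-part of $\fm$, whence $\rho\abs^a*\fn\lp\fm$ follows from the single-line statement; and (ii) the degenerate case $l(\De)=1$, where ${}^-\De$ is empty, which is covered by the same identity in \Cref{L:longseg}, namely $\rho\abs^a*(\fm-[a,a]_\rho)\lp\fm$.
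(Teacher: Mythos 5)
Your proof is correct and follows essentially the same route as the paper's: the extending property together with \Cref{L:longseg} and the monotonicity axiom of Whittaker-type maps gives $\Wc(\rho\abs^a\times\tc(\fm-\De+{}^-\De))\subseteq\tc(\fm)$, and then \Cref{L:derind}, after applying the exact derivative functor to this inclusion, yields the claim. The extra care you take with the single-cuspidal-line formulation of \Cref{L:longseg} and with the degenerate case $l(\De)=1$ merely spells out details the paper leaves implicit.
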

\begin{proof}
We show that $\rho\abs^a*(\fm-\De+{}^-\De)\lp \fm$.
Let $\Gamma=[a-1,b']_\rho$ be the longest segment in $\fm-\De+{}^-\De$ with $a_\rho(\Gamma)=a-1$. If $\Gamma={}^-\De$ we have that the left side equals the right side. Otherwise, we have $b'>b$ and that the left side equals to
\[\fm-\De+{}^-\De-\Gamma+{}^+\Gamma,\] which can be obtained from $\fm$ via the elementary operation $\Gamma+\De\mapsto {}^+\Gamma+{}^-\De$.
 By the above observation and the properties of $\tc_\psi$, we obtain that 
    \[\Wc(\rho\abs^a\times \tc_\psi(\fm-\De+{}^-\De),\psi)\subseteq\tc_\psi(\fm).\]
The claim then follows from \Cref{L:derind}.
\end{proof}
\subsection{ L-factors of standard modules}
Fix an additive character $\psi$.
The goal of this section is to show that $\Sc_\psi$ and $\Sc_{gen,\psi}$ are $L$-standard.
\begin{theorem}\label{T:RSL}
    Let $\tc_\psi$ be an extending map of Whittaker type such that $\tc_\psi(\fm)\subseteq \Sc_\psi(\fm)$ for all $\fm\in \Ms$. Then $\tc_\psi$ is $L$-standard, \emph{i.e.} for $\fn,\fm\in \Msi^{ap}$ \[L(X,\tc_\psi(\fm),\tc_{\psi^{-1}}(\fn))=L(X,\Cc'(\fm),\Cc'(\fn)).\]
\end{theorem}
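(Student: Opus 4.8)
The plan is to reduce the computation of $L(X,\tc_\psi(\fm),\tc_{\psi^{-1}}(\fn))$ for $\fm,\fn\in\Msi^{ap}$ to the already-known $\ql$-case of \Cref{P:smql} together with the explicit formula for $L(X,\Cc'(\fm),\Cc'(\fn))$, exploiting the divisibility properties of $L$-factors under inclusion of Whittaker-type subrepresentations (\Cref{L:linc}, \Cref{L:lfredmodl}, \Cref{L:inclusion}). The key structural input is that $\tc_\psi$ is extending and that $\tc_\psi(\fm)\subseteq\Sc_\psi(\fm)$, so that on the one hand every $L(X,\tc_\psi(\fm),\tc_{\psi^{-1}}(\fn))^{-1}$ divides $L(X,\Sc_\psi(\fm),\Sc_{\psi^{-1}}(\fn))^{-1}$, while on the other hand the extending property will let me produce enough Whittaker functions to force the reverse divisibility. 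Since $L(0,-,-)=1$ by normalization, a two-sided divisibility of the inverse $L$-factors (which are polynomials) gives equality.

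First I would establish the \emph{upper bound}: $L(X,\Sc_\psi(\fm),\Sc_{\psi^{-1}}(\fn))^{-1}$ divides $\rl\bigl(L(X,\Sc_\psi(\tfm),\Sc_{\psi^{-1}}(\tfn))^{-1}\bigr)$ for any lifts $\tfm,\tfn$, by \Cref{L:lfredmodl} applied to the integral structures $\Sc_\psi(\tfm)^{en}$, $\Sc_\psi(\tfn)^{en}$ and the fact that $\overline{\Sc_\psi(\tfm)}\supseteq\Sc_\psi(\fm)$ combined with \Cref{L:linc}. Over $\ql$, \Cref{P:smql} gives $L(X,\Sc_\psi(\tfm),\Sc_{\psi^{-1}}(\tfn))=L(X,\Cc'(\tfm),\Cc'(\tfn))$, and a direct inspection of the definition of $L(X,\Cc'(-),\Cc'(-))$ shows that choosing the lifts compatibly (lifting each segment $[a,b]_\rho$ to $[a,b]_\trho$ with $\trho$ a lift of $\rho$) makes $\rl(L(X,\Cc'(\tfm),\Cc'(\tfn)))$ divisible by $L(X,\Cc'(\fm),\Cc'(\fn))^{-1}$ in the reciprocal sense; here one uses that the Euler factor $(1-(\chi(\varpi_\Ff)X)^{f(\rho)})^{-1}$ for $\rho\in\scu$ satisfies $f(\rho)=f(\trho)$ and $o(\rho)=$ ord of $q^{f(\rho)}$, and that the contributions from segments with $o(\rho)=1$ (which are excluded since $\fm,\fn\in\Msi^{ap}$) are precisely the ones that could fail to reduce well. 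This yields $L(X,\tc_\psi(\fm),\tc_{\psi^{-1}}(\fn))^{-1}\mid L(X,\Cc'(\fm),\Cc'(\fn))^{-1}$.

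For the \emph{lower bound} I would show $L(X,\Cc'(\fm),\Cc'(\fn))^{-1}\mid L(X,\tc_\psi(\fm),\tc_{\psi^{-1}}(\fn))^{-1}$ by an inductive argument on $\deg(\fm)+\deg(\fn)$ using the extending property and the inductivity of $\gamma$-factors. Writing $\fm=\rho*\fm'$ (as in the proof of \Cref{L:sccorder}) or peeling off a segment via \Cref{C:extendingproperty}, the extending property gives $\Wc(\rho\times\tc_\psi(\fm'))\subseteq\tc_\psi(\fm)$, hence by \Cref{L:linc} $L(X,\Wc(\rho\times\tc_\psi(\fm')),\tc_{\psi^{-1}}(\fn))^{-1}\mid L(X,\tc_\psi(\fm),\tc_{\psi^{-1}}(\fn))^{-1}$, and then $\gamma(X,\rho\times\tc_\psi(\fm'),\tc_{\psi^{-1}}(\fn),\psi)=\gamma(X,\rho,\tc_{\psi^{-1}}(\fn),\psi)\gamma(X,\tc_\psi(\fm'),\tc_{\psi^{-1}}(\fn),\psi)$ lets me separate the new segment's contribution; combined with \Cref{L:inclusion} applied to the derivatives $\tc_\psi(\fn)^{(k)}$ one extracts the cuspidal-pair factors $L(X,\Cc'(\rho v_\rho^i),\Cc'(\rho' v_{\rho'}^j))$ one at a time. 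Matching these against the product formula in the definition of $L(X,\Cc'(\De),\Cc'(\De'))$ closes the induction.

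The main obstacle I expect is the bookkeeping in the lower-bound step: controlling exactly which cuspidal Rankin–Selberg factor $L(X,\Cc'(\rho v_\rho^i),\Cc'(\rho'v_{\rho'}^j))$ is produced when one peels off a single $\rho\abs^i$ via the generic-extension product, since the product $*$ can shrink or shift segments rather than simply appending $[i,i]_\rho$, so the naive induction on segments does not literally apply. The remedy is to work with the recursive description $i+\fm$, $\fm+i$ from \Cref{S:genext} and \Cref{L:longseg}, tracking the degeneration order and using that all multisegments involved stay in $\Msi^{ap}$ so that no periodic sub-multisegment (which would kill an Euler factor upon reduction) ever appears; the condition $\fm,\fn\in\Msi^{ap}$ is exactly what makes the $\ql$-formula reduce cleanly and is therefore used essentially in both directions.
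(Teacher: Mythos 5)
Your overall frame (two-sided divisibility of the inverse $L$-factors, with the easy direction coming from $\tc_\psi(\fm)\subseteq\Sc_\psi(\fm)$, \Cref{L:linc}, \Cref{L:lfredmodl} and the $\ql$-computation of \Cref{P:smql}) matches the paper's \Cref{C:divisibility}, and your use of \Cref{C:extendingproperty} with \Cref{L:inclusion} in an induction on degree is also how the paper begins. But the proposed proof of the reverse divisibility has a genuine gap, and it is not the bookkeeping issue you flag. All of the tools you invoke for the lower bound — \Cref{L:linc}, \Cref{L:inclusion}, and derivative inclusions coming from the extending property — give divisibility in the \emph{same} direction: they show that poles of $L$-factors of subrepresentations or derivatives of $\tc_\psi(\fm)$ are among the poles of $L(X,\tc_\psi(\fm),\tc_{\psi^{-1}}(\fn))$, so by induction they only recover the factor $L(X,\Cc'(\fm-\De+{}^-\De),\Cc'(\fn))$; the genuinely new Euler factors attached to the peeled-off segment paired against $\fn$ (the product appearing in \Cref{L:red}) cannot be produced this way. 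Nor does the inductivity of $\gamma$-factors help: $\gamma$ only determines the ratio $L(q^{-1}X^{-1},{\pi}^{\mathfrak c},{\pi'}^{\mathfrak c})/L(X,\pi,\pi')$ up to a unit, so exactly the factors that might cancel upon reduction mod $\ell$ are invisible to it. In the modular setting this cancellation is the whole content of the theorem, and it cannot be ruled out by "extracting cuspidal-pair factors one at a time."

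The paper's resolution is different in kind: writing $L(X,\tc_\psi(\fm),\tc_{\psi^{-1}}(\fn))^{-1}P(X)=L(X,\Cc'(\fm),\Cc'(\fn))^{-1}$, it first proves a slightly stronger statement (only one of $\fm,\fn$ need be aperiodic, after reducing to $\fm=\fm_b$, $\fn=\fn_b$) so that induction along $\lp$ is available; then it confines $P$ to an explicit finite product of factors via \Cref{L:aperiodic} and \Cref{L:red} (using the Whittaker-type and extending properties together with \Cref{L:inclusion}); and finally it kills $P$ by a global argument by contradiction: the functional-equation statement \Cref{L:unit} forces $P(X)/P^\lor(q^{-1}X^{-1})$ to be a monomial, and \Cref{L:gcd} together with an iteration (switching the roles of $\fm$ and $\fn$ and repeatedly applying \Cref{L:red}) shows that a nonconstant $P$ would force a periodic family of segments $[a-i,b-i]_\rho$ inside $\fm$ (or a full cycle inside $\fn$ in the non-aperiodic case), contradicting aperiodicity. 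This duality-plus-aperiodicity mechanism is the essential missing idea in your proposal; without it, the induction you describe only yields one of the two divisibilities and the proof does not close.
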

This has been achieved in \cite[Theorem 4.22]{KurMatII} for generic representations if $R$ is arbitrary. In the case $R=\ql$, this follows from the work of \cite{JacPiaSha} as mentioned in \Cref{S:smql}, yielding the following corollary of \Cref{L:lfredmodl} and \Cref{L:inclusion}.
\begin{corollary}\label{C:divisibility}
    Let $\fn,\fm\in \Ms$. Then \[L(X,\tc_{\psi}(\fm),\tc_{\psi^{-1}}(\fn))^{-1}\lvert L(X,\Cc'(\fm),\Cc'(\fn))^{-1}.\]
\end{corollary}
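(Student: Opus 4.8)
The plan is to reduce the claim, in two steps, to the compatibility of the $\ql$-Rankin--Selberg $L$-factor with reduction modulo $\ell$ and to a purely combinatorial comparison of parameter $L$-factors. First I would remove the hypothesis on $\tc_\psi$: since $\tc_\psi(\fm)\subseteq\Sc_\psi(\fm)$ and $\tc_{\psi^{-1}}(\fn)\subseteq\Sc_{\psi^{-1}}(\fn)$ are subrepresentations of Whittaker type, applying \Cref{L:linc} in each variable (using the symmetry $L(X,-,-)=L(X,-,-)$ with the two arguments exchanged) yields
\[
L(X,\tc_\psi(\fm),\tc_{\psi^{-1}}(\fn))^{-1}\ \big|\ L(X,\Sc_\psi(\fm),\Sc_{\psi^{-1}}(\fn))^{-1},
\]
so it suffices to treat $\tc=\Sc$. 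If $R=\ql$ this is already an equality: by \Cref{P:smql} and \cite[Theorem 8.2]{JacPiaSha}, $L(X,\Sc_\psi(\fm),\Sc_{\psi^{-1}}(\fn))$ factors as the product over the segments $\De$ of $\fm$ and $\De'$ of $\fn$ of the Rankin--Selberg $L$-factors of the corresponding $\langle\De\rangle$, and a segment-by-segment inspection identifies this product with $L(X,\Cc'(\fm),\Cc'(\fn))$ directly from the definition of the latter.

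Next I would handle $R=\fl$. Fix a lift $\tfm$ of $\fm$ and a lift $\tfn$ of $\fn$. By definition $\Sc_\psi(\fm)\subseteq\overline{\Sc_\psi(\tfm)}=\overline{\Wc(T(\tfm))}$ and likewise $\Sc_{\psi^{-1}}(\fn)\subseteq\overline{\Wc(T(\tfn))}$, the representations $T(\tfm),T(\tfn)$ being integral and of Whittaker type. Applying \Cref{L:linc} once more and then \Cref{L:lfredmodl} gives
\[
L(X,\Sc_\psi(\fm),\Sc_{\psi^{-1}}(\fn))^{-1}\ \big|\ L\big(X,\overline{\Wc(T(\tfm))},\overline{\Wc(T(\tfn))}\big)^{-1}\ \big|\ \rl\!\Big(L\big(X,\Sc_\psi(\tfm),\Sc_{\psi^{-1}}(\tfn)\big)^{-1}\Big),
\]
and by the $\ql$-case the right-hand side equals $\rl\big(L(X,\Cc'(\tfm),\Cc'(\tfn))^{-1}\big)$. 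It therefore remains to choose the lifts $\tfm,\tfn$ so that this polynomial divides $L(X,\Cc'(\fm),\Cc'(\fn))^{-1}$.

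The final step is the combinatorial core of the argument and is where I expect the main difficulty. Unwinding the definition of $L(X,\Cc'(-),\Cc'(-))$, a factor $\big(1-(\chi(\varpi_\Ff)X)^{f(\trho)}\big)^{-1}$ appearing over $\ql$ reduces, using $f(\rho)=f(\trho)$ and the fact that the reduction mod $\ell$ of an unramified character is unramified, to exactly the matching factor over $\fl$ whenever $\rho\in\scu$; choosing the lifts of the individual segments compatibly then matches all such contributions. The genuinely delicate case is $\rho\notin\scu$, i.e. $o(\rho)=1$: there every lift $\trho$ is supercuspidal over $\ql$ and contributes a non-trivial parameter factor, whereas the right-hand side of the corollary has no such factor, so the lift-by-lift estimate above is too weak and one cannot argue lift-by-lift. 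Here one has to argue separately: reduce to the case of a single cuspidal line, observe (as in the discussion around \Cref{C:sminc}) that on such a line $\Sc_\psi$ takes values in the Whittaker models of cuspidals, isolate that contribution via \Cref{C:extendingproperty} and \Cref{L:inclusion}, and invoke the explicit computation of \cite{KurMat} that the modular Rankin--Selberg $L$-factor of a cuspidal pair on a line with $o(\rho)=1$ is trivial. Once this input is in place, the chain of divisibilities above closes and yields the corollary.
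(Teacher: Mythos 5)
Your first two paragraphs are exactly the paper's own (very terse) justification: the corollary is stated there without a written proof, as an immediate consequence of the divisibility under passage to subrepresentations (\Cref{L:linc}, or \Cref{L:inclusion} with $k=0$), of \Cref{L:lfredmodl}, and of the $\ql$-equality furnished by \Cref{P:smql} and Jacquet--Piatetski-Shapiro--Shalika; the chain $\tc_\psi(\fm)\subseteq\Sc_\psi(\fm)\subseteq\overline{\Sc_\psi(\tfm)}$ followed by reduction mod $\ell$ is precisely what is intended. Note that for the segments over $\rho\in\scu$ no ``compatible choice'' of lifts is needed: a pair of lifted segments contributes a nontrivial factor to $L(X,\Cc'(\tfm),\Cc'(\tfn))$ only when the lifted cuspidals are unramified twists of each other's duals, and in that case its reduction (using $f(\trho)=f(\rho)$) is literally one of the factors of $L(X,\Cc'(\fm),\Cc'(\fn))$; so for $\fm,\fn\in\Msi$ --- which is the only situation in which the corollary is invoked in the proof of \Cref{T:RSL} --- the divisibility closes for an arbitrary choice of lifts.

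Your third step, however, does not work as written, and this is a genuine gap in the proposal. \Cref{C:extendingproperty} is only available for $\fm\in\Msi$ and segments $[a,b]_\rho$ with $\rho\in\scu$, so it cannot be used to isolate the $o(\rho)=1$ contributions; and the input you want to import from \cite{KurMat} --- that the modular Rankin--Selberg factor of a cuspidal pair on a line with $o(\rho)=1$ is trivial --- is not what they prove and is false: already for the trivial character $\chi_0$ of $G_1$ with $\ell\mid q-1$ one has $o(\chi_0)=1$, yet taking $W=W'=1$ and $\phi$ the characteristic function of $\of$ gives (with the usual normalization of measures) $I(X,W,W',\phi)=\sum_{k\ge 0}X^k=(1-X)^{-1}$, hence $L(X,\chi_0,\chi_0)=(1-X)^{-1}$ over $\fl$, whereas $L(X,\Cc'(\chi_0),\Cc'(\chi_0))=1$ since $\chi_0\notin\scu$. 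So on non-banal lines the divisibility cannot be rescued along the route you sketch; this example in fact shows that the statement must be handled with care there, and the paper itself supplies no argument for that case and only ever uses the corollary after the non-banal parts have been discarded. Your parenthetical claim that on such a line $\Sc_\psi$ takes values in Whittaker models of cuspidals is also incorrect as soon as the multisegment has more than one segment. In short: your first two paragraphs reproduce the paper's proof and are fine; the correct repair of the last step is not a new argument but a restriction of the conclusion to $\fm,\fn\in\Msi$ (equivalently, to the banal parts), where the chain you set up already terminates.
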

Before we come to the proof of \Cref{T:RSL}, we note the following useful lemmas, all of whom follow easily from the definitions.
\begin{lemma}\label{L:aperiodic}
    Let $\fm\in \Ms$ containing segments $\De=[a,b]_\rho,\, \De'=[a+1,b+1]_\rho$ for some $\rho\in \scu$. Let $\fn\in \Ms$ and let $\fn'$ be the sub-multisegment of $\fn$ consisting of segments of the form $[c,d]_{\rho'}$ with $\rho'\cong \chi\rho^\lor$ for some unramified character $\chi$ and $c-d=b-a$.
    Finally, set $\fm'=\fm-\De-\De'+[a,b+1]_\rho+[a+1,b]_\rho$.
    Then
    \[\frac{L(X,\Cc'(\fm),\Cc'(\fn))}{L(X,\Cc'(\fm'),\Cc'(\fn))}=\prod_{[c,d]_{\chi\rho^\lor}\in \fn'}(1-\chi(\varpi_\mathrm{F})q^{-a-d}X)^{-f(\rho)}.\]
\end{lemma}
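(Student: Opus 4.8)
The statement is a purely combinatorial identity about the $\Cc'$-$L$-factors of multisegments, so the plan is to unwind the definition of $L(X,\Cc'(\fm),\Cc'(\fn))$ as a product over pairs of segments and track exactly which factors change when we replace the linked pair $\De=[a,b]_\rho$, $\De'=[a+1,b+1]_\rho$ by the unlinked pair $[a,b+1]_\rho$, $[a+1,b]_\rho$. First I would record that, by definition,
\[
L(X,\Cc'(\fm),\Cc'(\fn))=\prod_{\Sigma\in\fm}\ \prod_{T\in\fn}L(X,\Cc'(\Sigma),\Cc'(T)),
\]
and that $\fm$ and $\fm'$ agree outside the two segments in question, so all factors $L(X,\Cc'(\Sigma),\Cc'(T))$ with $\Sigma\notin\{\De,\De'\}$ cancel in the quotient. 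Hence the ratio equals
\[
\prod_{T\in\fn}\frac{L(X,\Cc'(\De),\Cc'(T))\,L(X,\Cc'(\De'),\Cc'(T))}{L(X,\Cc'([a,b+1]_\rho),\Cc'(T))\,L(X,\Cc'([a+1,b]_\rho),\Cc'(T))}.
\]
The four segments $\De,\De',[a,b+1]_\rho,[a+1,b]_\rho$ have the same "content" as multisets of cuspidal lines apart from one discrepancy: $[a,b+1]_\rho$ reaches one step further to the right than $\De$ and $[a+1,b]_\rho$ one step less far to the right than $\De'$; symmetrically on the left the two pairs agree as multisets $\{a,a+1\}$ of left endpoints.

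Next I would compute the per-$T$ factor using the segment-$L$-factor definition, which is a product of rank-one factors $L(X,\Cc'(\rho v_\rho^i),\Cc'(\rho' v_{\rho'}^{j}))$, each of which is nontrivial only when $\rho'\cong\chi\rho^\lor$ for an unramified $\chi$ and the exponents are compatible, in which case it contributes $(1-(\chi(\varpi_\Ff)X)^{f(\rho)})^{-1}$ evaluated at the appropriate shift. Writing $T=[c,d]_{\rho'}$, the only way $T$ contributes at all is $\rho'\cong\chi\rho^\lor$; and because the four segments involved all have length either $b-a+1$ or $b-a+2$, and the $L$-factor of a pair of segments depends (through the $\min$/$\max$ of lengths) in a controlled way on $l(T)$, the net contribution collapses unless $l(T)=b-a+1$, i.e. $c-d=-(b-a)$, which is precisely the condition defining the sub-multisegment $\fn'$. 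For such $T=[c,d]_{\chi\rho^\lor}$ with $d-c=b-a$, a direct comparison of the four segment-$L$-factors — expanding each as the relevant product of rank-one factors and canceling the common ones — should leave exactly the single surviving rank-one factor coming from the "extra" right endpoint $b+1$ of $\De$ paired against $T$, namely $(1-\chi(\varpi_\Ff)q^{-a-d}X)^{-f(\rho)}$ (the shift $-a-d$ coming from $v_\rho^{b+1}$ against $v_{\rho'}^{?}$ together with $f(\rho)=f(\rho^\lor)$ and $o(\rho)\mid$ the length). Multiplying over all $T\in\fn'$ yields the claimed formula.

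The bookkeeping I would carry out carefully is the exponent in the surviving rank-one factor: one must check that, after using $\chi\rho^\lor$ in place of $\rho'$ and the normalization $v_\rho$, the relevant shift is $q^{-a-d}$ and not, say, $q^{-a-c}$ or $q^{-b-d}$, and that the multiplicity of this factor is exactly $f(\rho)$ (this uses that $o(\rho)\,\big|\,l(\De')\cdot$ something is \emph{not} what happens — rather, that the segments have length $\le o(\rho)$ is irrelevant here and the factor appears with multiplicity $f(\rho)$ because a single rank-one $L$-factor is $(1-(\chi(\varpi_\Ff)X)^{f(\rho)})^{-1}$). Concretely I would split into the two cases $l(T)\le l(\De)$ and $l(T)\ge l(\De)$ in the segment-$L$-factor definition; since $l(T)=l(\De)$ both formulas apply and give the product $\prod_{i=a}^{b}L(X,\Cc'(\rho v_\rho^i),\Cc'(\rho' v_{\rho'}^{d}))$, and analogously for $\De'$, $[a,b+1]_\rho$, $[a+1,b]_\rho$ (the last one being shorter than $T$, so one uses the $l(\De)\ge l(\De')$ branch there), and then everything is an explicit finite telescoping product.

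The main obstacle I expect is purely notational: keeping straight the four segment-$L$-factor expansions and the direction of the inequalities $l(\De)\lessgtr l(T)$ so that one uses the correct branch of the definition in each of the four cases, and pinning down the precise shift $q^{-a-d}$. There is no representation-theoretic input needed — Lemma~\ref{L:aperiodic} is flagged in the text as following "easily from the definitions", and indeed the entire proof is the cancellation computation sketched above; I would simply present the quotient, isolate the surviving rank-one factors, and identify them with $(1-\chi(\varpi_\Ff)q^{-a-d}X)^{-f(\rho)}$ for each $[c,d]_{\chi\rho^\lor}\in\fn'$.
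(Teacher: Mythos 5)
The paper gives no written proof of this lemma (it is declared to follow easily from the definitions), and your plan — expand $L(X,\Cc'(\fm),\Cc'(\fn))$ as a product over pairs of segments, cancel all factors not involving $\De,\De',[a,b+1]_\rho,[a+1,b]_\rho$, and then do the case analysis on the length of $T=[c,d]_{\chi\rho^\lor}$ against $b-a+1$ using the two branches of the segment $L$-factor definition — is exactly that computation and is correct (including your reading of the defining condition of $\fn'$ as ``same length'', $d-c=b-a$). The only cosmetic slips are that the four affected segments have lengths $b-a$, $b-a+1$, $b-a+1$, $b-a+2$ (not only the latter two), and that in the telescoping for $l(T)=b-a+1$ the surviving rank-one factor is the one with exponent sum $a+d$ (the left end $a$ of $\De$ paired against the end $d$ of $T$) rather than one attached to $b+1$; carried out as you propose, it indeed equals the claimed $(1-\chi(\varpi_\Ff)q^{-a-d}X)^{-f(\rho)}$ for each segment of $\fn'$.
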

\begin{lemma}\label{L:red}
    Let $\fm,\fn\in \Ms$ and $\De=[a,b]_\rho$ a segment of maximal length in $\fm$, where we assume $\rho\in \scu$. 
    Let $\fn'$ be the sub-multisegment of $\fn$ consisting of segments $[c,d]_{\rho'}$ with $\rho'\cong \chi\rho^\lor$ for some unramified character $\chi$ and $c-d\ge b-a$.
    Finally set $\fm'=\fm-\De+{}^-\De$. Then
    \[\frac{L(X,\Cc'(\fm),\Cc'(\fn))}{L(X,\Cc'(\fm'),\Cc'(\fn))}=\prod_{[c,d]_{\chi\rho^\lor}\in \fn'}(1-\chi(\varpi_\mathrm{F})q^{-a-d}X)^{-f(\rho)}.\]
\end{lemma}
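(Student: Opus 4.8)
The plan is to reduce the computation of the ratio $L(X,\Cc'(\fm),\Cc'(\fn))/L(X,\Cc'(\fm'),\Cc'(\fn))$ to a purely combinatorial bookkeeping over the segments of $\fn$, using multiplicativity of $L(X,\Cc'(-),\Cc'(-))$ in both arguments. Since $\fm'=\fm-\De+{}^-\De$ with $\De=[a,b]_\rho$ and ${}^-\De=[a+1,b]_\rho$, and every other segment of $\fm$ is unchanged, the product formula $L(X,\Cc'(\fm),\Cc'(\fn))=\prod_{\De_i\in\fm,\,\De_j'\in\fn}L(X,\Cc'(\De_i),\Cc'(\De_j'))$ shows that all factors involving segments of $\fm$ other than $\De$ cancel. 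Thus the ratio equals $\prod_{\De_j'\in\fn} L(X,\Cc'(\De),\Cc'(\De_j'))/L(X,\Cc'({}^-\De),\Cc'(\De_j'))$, and it suffices to analyze one segment $\De'=[c,d]_{\rho'}$ of $\fn$ at a time.

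Next I would expand $L(X,\Cc'(\De),\Cc'(\De'))$ using its definition as a product of rank-one $L$-factors $L(X,\Cc'(\rho v_\rho^i),\Cc'(\rho' v_{\rho'}^j))$, which are nontrivial only when $\rho'\cong\chi\rho^\lor$ for an unramified $\chi$ and the relevant exponents match, in which case the factor is $(1-(\chi(\varpi_\Ff)X)^{f(\rho)})^{-1}$ with the appropriate twist. Here the key point is the case distinction $l(\De)\le l(\De')$ versus $l(\De)\ge l(\De')$ in the definition: since $\De$ has maximal length in $\fm$ and we only pick up contributions from segments $\De'=[c,d]_{\chi\rho^\lor}$ with $c-d\ge b-a$, i.e. $l(\De')\ge l(\De)$, we are in the regime where the product runs over $i=a,\dots,b$ with the second argument fixed at $\rho' v_{\rho'}^{?}$; passing from $\De$ to ${}^-\De$ simply drops the single index $i=a$ from this product, leaving exactly the factor $(1-\chi(\varpi_\Ff)q^{-a-d}X)^{-f(\rho)}$ (the twist $q^{-a-d}$ coming from pairing $\rho v_\rho^a$ against $\chi\rho^\lor v_{\rho'}^d$, using $v_\rho$-normalization and $\rho\in\scu$ so $o(\rho)>1$). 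Multiplying over all such $\De'=[c,d]_{\chi\rho^\lor}\in\fn'$ gives the claimed formula, and segments of $\fn$ not in $\fn'$ contribute trivially to both numerator and denominator.

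The main obstacle I expect is bookkeeping the twist exponent correctly and confirming that exactly the index $i=a$ is dropped (and not, say, $i=b$): this hinges on which of $\De,{}^-\De$ is the shorter segment and on how the two cases in the definition of $L(X,\Cc'(\De),\Cc'(\De'))$ glue when $l(\De)=l(\De')$ exactly — one must check the two formulas agree on the overlap so the ratio is unambiguous. Once the single-segment identity $L(X,\Cc'(\De),\Cc'(\De'))/L(X,\Cc'({}^-\De),\Cc'(\De'))=(1-\chi(\varpi_\Ff)q^{-a-d}X)^{-f(\rho)}$ is pinned down, the rest is a formal product. (Lemma \ref{L:aperiodic} is proved by the same method, replacing ${}^-\De$ by the elementary-operation output and tracking that the net change is again the loss of the $i=a$ factor against those $\De'$ with $l(\De')=l(\De)$ exactly.)
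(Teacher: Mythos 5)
Correct: the paper gives no argument for this lemma (it is asserted to ``follow easily from the definitions''), and your computation---cancel the factors coming from the unchanged segments of $\fm$, then compare $L(X,\Cc'(\De),\Cc'(\De'))$ with $L(X,\Cc'({}^-\De),\Cc'(\De'))$ one segment $\De'$ of $\fn$ at a time via the two cases in the definition, checking they glue when $l(\De)=l(\De')$---is precisely that verification, so it is essentially the paper's (implicit) proof.
The one step worth writing out is why dual-line segments of $\fn$ shorter than $\De$ cancel: for those both $\De$ and ${}^-\De$ fall into the second case of the definition, whose product depends only on the common right endpoint $b$, so the two factors coincide; also note that the stated condition ``$c-d\ge b-a$'' is a sign slip for $d-c\ge b-a$, i.e. $l(\De')\ge l(\De)$, which is how you correctly interpreted it.
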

\begin{lemma}\label{L:unit}
    Let $\fm,\fn\in \Ms$ and $\tc_\psi$ a map of Whittaker type such that $\tc_\psi(\fn)\subseteq \Sc_\psi(\fn)$, $\tc_\psi(\fm)\subseteq \Sc_\psi(\fm)$. Then
    \[\frac{L(X,\tc_{\psi}(\fm),\tc_{\psi^{-1}}(\fn))L(q^{-1}X^{-1},\Cc'(\fm^\lor),\Cc'(\fn^\lor))}{L(q^{-1}X^{-1},\tc_{\psi}(\fm^\lor),\tc_{\psi^{-1}}(\fn^\lor))L(X,\Cc'(\fm),\Cc'(\fn))}\] is a unit in $R[X,X^{-1}]$.
\end{lemma}
\begin{proof}
Combining \Cref{L:linc} and \Cref{L:lfredmodl} shows that that the above fraction is equal to
    \[\rl(\epsilon(X,\Sc_{\psi}(\tfm),\Sc_{\psi^{-1}}(\tfn),\psi))\epsilon(X,\tc_{\psi}(\fm),\tc_{\psi^{-1}}(\fn),\psi)^{-1}\] for suitable lifts $\tfm$ and $\tfn$ of $\fm$ and $\fn$. Since $\epsilon$-factors are units in $R[X,X^{-1}]$, the claim follows.
\end{proof}
\begin{lemma}\label{L:gcd}
Let $\alpha,\beta\in R$ such that $\alpha^f\neq \beta ^f$. Then \[\gcd(1-(\alpha X)^f,1-(\beta X)^f)=1.\]
\end{lemma}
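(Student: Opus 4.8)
The statement is purely about polynomials over the field $R$, so the plan is to work in the principal ideal domain $R[X]$ and to show that the ideal $I=\bigl(1-(\alpha X)^f,\;1-(\beta X)^f\bigr)$ is the unit ideal; since the $\gcd$ is by definition the (normalized) generator of $I$, this immediately yields $\gcd\bigl(1-(\alpha X)^f,1-(\beta X)^f\bigr)=1$.

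The key step is a single Bézout identity. Writing $a=\alpha^f$ and $b=\beta^f$, one computes
\[
b\bigl(1-(\alpha X)^f\bigr)-a\bigl(1-(\beta X)^f\bigr)=b\bigl(1-aX^f\bigr)-a\bigl(1-bX^f\bigr)=b-a .
\]
Hence $\beta^f-\alpha^f=b-a\in I$. By hypothesis $\alpha^f\neq\beta^f$, so $b-a$ is a nonzero element of the field $R$, and therefore a unit of $R[X]$; it follows that $I=R[X]$, and the $\gcd$ is $1$.

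There is essentially no obstacle here: the argument is a one-line computation, and the degenerate cases (when $\alpha=0$ or $\beta=0$, so that one of the two polynomials is simply $1$) are automatically covered by the same identity. I would only remark that $f=f(\rho)$ is a positive integer, so that in the generic case $(\alpha X)^f$ and $(\beta X)^f$ genuinely have positive degree; this plays no role in the proof, which uses solely that $\beta^f-\alpha^f$ lies in the ideal generated by the two polynomials.
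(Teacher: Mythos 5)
Your proof is correct. The paper itself offers no argument for this lemma (it is grouped among the statements said to ``follow easily from the definitions''), so there is no competing proof to compare against; your single Bézout identity
\[
\beta^f\bigl(1-(\alpha X)^f\bigr)-\alpha^f\bigl(1-(\beta X)^f\bigr)=\beta^f-\alpha^f\in R^\times
\]
does settle it completely, since $R$ is a field and hence the two polynomials generate the unit ideal in $R[X]$. A minor virtue of your route worth noting: the more naive argument one might reach for first (the two polynomials have no common root, because a common root $x\neq 0$ would force $\alpha^f=x^{-f}=\beta^f$) also works, but your identity is insensitive to questions of separability — relevant here because over $\fl$ the polynomial $1-(\alpha X)^{f}$ may have repeated roots when $\ell$ divides $f$ — and it handles the degenerate cases $\alpha=0$ or $\beta=0$ uniformly, as you observe.
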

We are now ready to give the proof of \Cref{T:RSL}.
\begin{proof}[Proof of \Cref{T:RSL}]
In fact we will show the slightly stronger statement that if $\fm,\fn\in \Ms$ with at least one of the aperiodic, then \[L(X,\tc_\psi(\fm),\tc_{\psi^{-1}}(\fn))=L(X,\Cc'(\fm),\Cc'(\fn)).\]We argue firstly by induction on $\deg(\fm)$ and $\deg(\fn)$ and for fixed $\deg(\fm)$ and $\deg(\fn)$ we argue moreover by induction on the order $\lp$ on the set of multisegments. The base case is \cite[Theorem 4.22]{KurMatII}.

By \Cref{C:divisibility} we know that there exists $P\in \fl[X]$ such that \[L(X,\tc_\psi(\fm),\tc_{\psi^{-1}}(\fn))^{-1}P(X)=L(X,\Cc'(\fm),\Cc'(\fn))^{-1}.\]
Moreover, let $\fm_b$ and $\fn_b$ be the banal parts of $\fm$ and $\fn$. By definition \[L(X,\Cc'(\fm),\Cc'(\fn))=L(X,\Cc'(\fm_b),\Cc'(\fn_b))\]
and by \Cref{L:linc} and \Cref{L:derind} and the base case, \[L(X,\tc_\psi(\fm_b),\tc_{\psi^{-1}}(\fn_b))= L(X,\tc_\psi(\fm),\tc_{\psi^{-1}}(\fn)),\] hence it is enough to treat the case where $\fm=\fm_b,\, \fn=\fn_b$.

We first assume without loss of generality that $\fm$ is not aperiodic, \emph{i.e.} it contains a multisegment \[[a,b]_\rho+\ldots +[a+o(\rho)-1,b+o(\rho)-1]_\rho.\]
Choose $\ain{i}{0}{o(\rho)-1}$ and using the notation of \Cref{L:aperiodic}, we set $\De=[a+i,b+i]_\rho,\, \De'=[a+i+1,b+i+1]_\rho$.
By assumption on, $\tc_\psi$,  $\tc_\psi(\fm')\subseteq\tc_\psi(\fm)$ and hence by induction on $\lp$ we obtain by \Cref{L:aperiodic} 
that 
\[P(X)\lvert \prod_{[c,d]_{\chi\rho^\lor}\in \fn:\, d-c=b-a}(1-\chi(\varpi_\mathrm{F})q^{-a-i-d}X)^{f(\rho)}.\]
Let $I_i=\{\chi(\varpi_\mathrm{F})q^{-a-i-d}: [c,d]_{\chi\rho^\lor}\in \fn:\, d-c=b-a\}$.
Assume that $P(X)$ is non-zero and let $(1-\alpha X)$ be one of its non-zero factors with $(1-\alpha X)\lvert 1-\chi(\varpi_\mathrm{F})q^{-a-i-d}X)^{f(\rho)}$ for some $i$ and $[c,d]_{\chi\rho^\lor}\in \fn:\, d-c=b-a$. 
Let $\ain{j}{1}{o(\rho)}$. Since \[(1-\alpha X)\lvert \prod_{[c,d]_{\chi\rho^\lor}\in \fn:\, d-c=b-a}(1-\chi(\varpi_\mathrm{F})q^{-a-j-d}X)^{f(\rho)},\] it follows that $(1-\alpha X)\lvert (1-\chi'(\varpi_\mathrm{F})q^{-a-j-d'}X)^{f(\rho)}$ for some $[c',d']_{\chi'\rho^\lor}\in \fn$. Therefore by \Cref{L:gcd},
\[\chi'(\varpi_\mathrm{F})^{f(\rho)}q^{f(\rho)(-a-j-d')}=\chi(\varpi_\mathrm{F})^{f(\rho)}q^{f(\rho)(-a-i-d)}.\]
It follows straightforwardly that this implies that for each $j$, $[c+j,d+j]_{\chi\rho^\lor}$ appears in $\fm$, contradicting the assumption on it being aperiodic.

Secondly, we assume that both $\fm$ and $\fn$ are aperiodic and moreover, without loss of generality the length of the longest segment in $\fm$ is greater or equal than the length of the longest segment in $\fn$. We will now use the notation of \Cref{L:red}, \emph{e.g.} $\De=[a,b]_\rho$ is a longest segment in $\fm$.
Now \Cref{C:extendingproperty} and the induction hypothesis in combination with \Cref{L:derind} and \Cref{L:inclusion} give
\[P(X)\lvert \prod_{[c,d]_{\chi\rho^\lor}\in \fn'}(1-\chi(\varpi_\mathrm{F})q^{-a-d}X)^{f(\rho)}.\]
If $\fn'$ is empty, we are done, hence we can assume that the longest segment in $\fm$ and the longest segment in $\fn$ have the same length.

Replacing $\fm$ and $\fn$ by $\fm^\lor$ and $\fn^\lor$ we obtain a polynomial $P^\lor(X)$ such that by \Cref{L:unit}
\[\frac{P(X)}{P^\lor(q^{-1}X^{-1})}=rX^k,\, r\in \fl,\, k\in \ZZ.\]
Applying the same reasoning as above to $P^\lor(X)$ we obtain 
\[P^\lor(X)\lvert \prod_{[c,d]_{\chi\rho^\lor}\in \fn'}(1-\chi(\varpi_\mathrm{F})^{-1}q^{b+c}X)^{f(\rho)}.\]
We now assume that $P(X)$ is not a constant, and has a zero at $(\chi(\varpi_\mathrm{F})q^{-a-d})^{-1}$ with $[c,d]_{\chi\rho^\lor}\in \fn'$. Then $P^\lor(q^{-1}X^{-1})$ has to have a zero also at $\chi(\varpi_\mathrm{F})q^{-a-d}$ implying that there exists $[c',d']_{\chi'\rho^\lor}\in \fn'$ such that 
\[\chi'(\varpi_\mathrm{F})q^{-b-c'+1}=\chi(\varpi_\mathrm{F})q^{-a-d}.\]
Thus $\chi'=\chi\abs^{-c'+1+c}$ and hence $[c+1,d+1]_{\chi\rho^\lor}\in \fn'$.

But now we can switch the roles of $\fm$ and $\fn$ as \[L(X,\tc_\psi(\fm),\tc_{\psi^{-1}}(\fn))=L(X,\tc_\psi(\fn),\tc_{\psi^{-1}}(\fm))\] and apply \Cref{L:red} with the longest segment $[c+1,d+1]_{\chi\rho^\lor}$, yielding that $[a-1,b-1]_\rho$ has to be a segment of $\fm$, since $(\chi(\varpi_\mathrm{F})q^{-a-d})^{-1}$ is a zero of $P(X)$.
Repeating this process we obtain that for all $i\in \ZZ_{\ge 0}$ the segment $[a-i,b-i]_\rho$ is contained in $\fm$, a contradiction to the assumption that $\fm$ is aperiodic.
\end{proof}
Recall now the $\Cc$-parameters of \cite{KurMatII}, \emph{i.e.}
the image of the injective map constructed in \cite{KurMatII}
\[\Cc\colon\Irr_n\ra\{\text{semi-simple Deligne }R-\text{representations of length }n\}.\]
For the precise definition we refer to \cite{KurMatII}. The right hand side of the above map is equipped with a tensor-product denoted by $\otimes_{ss}$ and
one can associate to two $\Cc$-parameters $\Cc(\pi)$ and $\Cc(\pi')$ the three local factors \[L(X,\Cc(\pi)\otimes_{ss}\Cc(\pi')),\,\epsilon(X,\Cc(\pi)\otimes_{ss}\Cc(\pi'),\psi),\, \gamma(X,\Cc(\pi)\otimes_{ss}\Cc(\pi'),\psi).\]
As a corollary to \Cref{T:RSL}, one can prove exactly as in \cite[§6.4.]{KurMatII} the following.
\begin{corollary}
    Let $\pi=\langle\fm\rangle,\,\pi'=\langle\fm'\rangle\in \Irr$. Then
    \[L(X,\Sc_\psi(\fm),\Sc_{\psi^{-1}}(\fn))=L(X,\Cc(\pi)\otimes_{ss}\Cc(\pi')),\]
    \[\epsilon(X,\Sc_\psi(\fm),\Sc_{\psi^{-1}}(\fn),\psi)=\epsilon(X,\Cc(\pi)\otimes_{ss}\Cc(\pi'),\psi),\]
    \[\gamma(X,\Sc_\psi(\fm),\Sc_{\psi^{-1}}(\fn),\psi)=\gamma(X,\Cc(\pi)\otimes_{ss}\Cc(\pi'),\psi).\]
If $\fm\in \Msi$, the same is true if one replaces $\Sc$ by $\sccpu$.
\end{corollary}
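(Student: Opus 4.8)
The plan is to deduce the corollary from \Cref{T:RSL} by importing, essentially verbatim, the parameter-side bookkeeping of \cite[\S6.4]{KurMatII}. Write $\pi=\langle\fm\rangle$ and $\pi'=\langle\fn\rangle$ with $\fm,\fn\in\Ms^{ap}$, and let $\tc$ be either $\Sc_\psi$ (with no restriction on $\fm$) or $\scc$ (when $\fm\in\Msi$); by the preceding ``extending'' lemma and \Cref{C:sminc}, $\tc$ is an extending map of Whittaker type with $\tc_\psi(\fm)\subseteq\Sc_\psi(\fm)$, so \Cref{T:RSL} — more precisely the slightly stronger statement established in its proof, which already performs the reduction to banal parts — gives
\[ L(X,\tc_\psi(\fm),\tc_{\psi^{-1}}(\fn)) = L(X,\Cc'(\fm),\Cc'(\fn)), \]
and the same with $\fm,\fn$ replaced by $\fm^\lor,\fn^\lor$, using $\tc_\psi(\fm)^{\mathfrak c}=\tc_{\psi^{-1}}(\fm^\lor)$. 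It therefore remains to identify the right-hand side — and its $\epsilon$- and $\gamma$-analogues — with the corresponding invariants of the genuine $\Cc$-parameters.

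For the $L$-factor this is exactly the computation of \cite[\S6.4]{KurMatII}: both $L(X,\Cc'(\fm),\Cc'(\fn))$ and $L(X,\Cc(\pi)\otimes_{ss}\Cc(\pi'))$ depend only on the banal parts $\fm_b,\fn_b$, so one reduces to $\fm,\fn\in\Msi^{ap}$; writing $\fm=\De_1+\dots+\De_k$, $\fn=\De_1'+\dots+\De_l'$, the parameter $\Cc(\langle\fm\rangle)$ is, up to summands invisible to $L$-factors, the direct sum $\bigoplus_i\Cc(\langle\De_i\rangle)$, while $\otimes_{ss}$ distributes over direct sums and the $L$-factor of a direct sum is the product of the $L$-factors. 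Comparing with the definition $L(X,\Cc'(\fm),\Cc'(\fn))=\prod_{i,j}L(X,\Cc'(\De_i),\Cc'(\De_j'))$ and the classical generalized-Steinberg identity $L(X,\Cc'(\De_i),\Cc'(\De_j'))=L(X,\Cc(\langle\De_i\rangle)\otimes_{ss}\Cc(\langle\De_j'\rangle))$ then gives the first equality of the corollary.

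For the $\gamma$-factor I would argue as in the proof of \Cref{T:RSL}: by \Cref{L:linc} one may pass to a Whittaker-type subrepresentation, so $\gamma(X,\Sc_\psi(\fm),\Sc_{\psi^{-1}}(\fn),\psi)$ equals the $\gamma$-factor of an iterated parabolic induction of generic pieces — over $\ql$ directly from $\Sc_\psi(\fm)=\Wc(T(\fm))$ (\Cref{P:smql}), and over $\fl$ either from the analogous decomposition of $\scc(\fm)\subseteq\Sc_\psi(\fm)$ for $\fm\in\Msi^{ap}$ (via \Cref{T:serre}) or by descending from $\ql$ using \Cref{L:lfredmodl} and the compatibility of $\Cc$-parameters with reduction mod $\ell$ — so that the inductivity relation of \Cref{L:linc} expresses it as a product of $\gamma$-factors of pairs of generic representations, for which the identification with the $\Cc$-parameter $\gamma$-factor is \cite[Theorem 4.22]{KurMatII}. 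Since $\gamma(X,-\otimes_{ss}-,\psi)$ distributes over the same decomposition of $\Cc(\pi)$ and $\Cc(\pi')$ the two sides agree (the non-banal segments, invisible to $L$-factors, contribute matching unit $\gamma$-factors on both sides). The $\epsilon$-factor identity then follows formally from $\epsilon(X,\pi,\pi',\psi)=\gamma(X,\pi,\pi',\psi)L(X,\pi,\pi')/L(q^{-1}X^{-1},\pi^{\mathfrak c},\pi'^{\mathfrak c})$, the analogous relation on the parameter side, and the $L$- and $\gamma$-identities applied to both $(\fm,\fn)$ and $(\fm^\lor,\fn^\lor)$.

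The main obstacle is not on the automorphic side — \Cref{T:RSL} already carries that weight — but lies entirely in the parameter-side input: the explicit description of $\Cc(\langle\fm\rangle)$ for aperiodic $\fm$ as a direct sum of segment parameters modulo $L$-trivial summands, and the compatibility of $\otimes_{ss}$, $L$, $\epsilon$ and $\gamma$ with such direct sums and with reduction mod $\ell$. This is precisely what \cite[\S6.4]{KurMatII} establishes; the only thing one has to check is that replacing, throughout that argument, a generic representation and its Rankin--Selberg $L$-factor by the standard module $\Sc_\psi(\fm)$ (resp.\ $\scc(\fm)$) together with the identity of \Cref{T:RSL} leaves the reasoning intact — which it does.
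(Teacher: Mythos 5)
Your proposal is correct and follows essentially the same route as the paper: the paper's own proof is precisely the remark that the corollary follows from \Cref{T:RSL} by running the argument of \cite[\S 6.4]{KurMatII} verbatim, i.e.\ the parameter-side reduction to banal parts and segment decompositions, the base case \cite[Theorem 4.22]{KurMatII} with inductivity for $\gamma$, and the functional-equation relation for $\epsilon$, exactly as you spell out. Your write-up merely makes explicit the bookkeeping the paper leaves implicit, so there is nothing to correct.
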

\subsection{ Quotients of standard modules}
Let $\fm\in \Msi$. We can define
\[J_\fm\colon \Sc_\psi(\fm)\otimes\Sc_{\psi^{-1}}(\fm^\lor)\otimes C_c^\infty(\Ff^n)\ra R\] given by \[(W,W',\phi)\mapsto\restr{L(X,\Sc_\psi(\fm),\Sc_{\psi^{-1}}(\fm^\lor))^{-1}I(X,W,W',\phi)}{X=1}.\]
Denote by $C_{c,0}^\infty(\Ff^n)$ the subspace of $C_c^\infty(\Ff^n)$ consisting of all function vanishing at $0$.
\begin{prop}\label{P:Jfm}
    The map $J_\fm$ vanishes for all $\phi\in C_{c,0}^\infty(\Ff^n)$. In particular, we obtain a non-zero map \[J_\fm\colon \Sc_{gen,\psi}(\fm)\otimes\Sc_{gen,\psi^{-1}}(\fm^\lor)\hra \Sc_\psi(\fm)\otimes\Sc_{\psi^{-1}}(\fm^\lor)\ra R\]
     given by \[W\otimes W'\mapsto J_\fm(W,W',\phi),\]
    where $\phi$ is some fixed element in $C_c^\infty(\Ff^n)$ such that $\phi(0)\neq 0$.

    Moreover, if $\tfm$ is a lift of $\fm$ then $J_\tfm$ restricts to a map
    \[J_{\tfm}\colon \Sc_\psi(\tfm)^{en}\otimes\Sc_{\psi^{-1}}(\tfm^\lor)^{en}\ra \zl\]
    whose reduction $\mod\ell$ equals to $J_\fm$.
\end{prop}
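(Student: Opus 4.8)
The plan is to prove the three assertions in turn, with the vanishing on $C_{c,0}^\infty(\Ff^n)$ as the crucial first step. First I would recall from the theory of Rankin--Selberg integrals (as in \cite{JacPiaSha}, \cite{KurMat}) that for $W\in \Sc_\psi(\fm)$, $W'\in \Sc_{\psi^{-1}}(\fm^\lor)$ and $\phi\in C_c^\infty(\Ff^n)$ the integral $I(X,W,W',\phi)$ is a rational function whose poles are controlled by $L(X,\Sc_\psi(\fm),\Sc_{\psi^{-1}}(\fm^\lor))$, so that the quotient $L(X,\Sc_\psi(\fm),\Sc_{\psi^{-1}}(\fm^\lor))^{-1}I(X,W,W',\phi)$ lies in $R[X,X^{-1}]$ and hence can be specialized at $X=1$; this makes $J_\fm$ well defined. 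For the vanishing, the key point is that if $\phi$ vanishes at $0$, then $\phi = \sum_i \phi_i$ where each $\phi_i$ is supported away from $0$, and for such $\phi_i$ the integral $c_k(W,W',\phi_i)$ is, up to a shift, a Rankin--Selberg integral attached to a \emph{derivative}: concretely, one uses the stratification of $\Ff^n\setminus\{0\}$ by $P_n$-orbits together with \Cref{L:phirest} to rewrite $c_k(W,W',\phi)$ in terms of the restrictions $\restr{W}{P_{n-1}}$, $\restr{W'}{P_{n-1}}$, i.e.\ in terms of the spaces $(\Phi^-)(\Kr(\Sc_\psi(\fm),\psi))$ and its $\psi^{-1}$-counterpart. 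This realizes $I(X,W,W',\phi)$ for $\phi\in C_{c,0}^\infty(\Ff^n)$ as (a finite sum of) Rankin--Selberg integrals of the shifted derivatives $\Sc_\psi(\fm)^{(1)}$ against $\Sc_{\psi^{-1}}(\fm^\lor)^{(1)}$, whose $L$-factor strictly divides $L(X,\Sc_\psi(\fm),\Sc_{\psi^{-1}}(\fm^\lor))$. One then has to verify that this strict divisibility forces the normalized integral to vanish at $X=1$; here is where the explicit computation of $L(X,\Sc_\psi(\fm),\Sc_{\psi^{-1}}(\fm^\lor))$ from \Cref{T:RSL} as a product of factors $(1-\alpha X)^{-1}$, together with the fact (\Cref{C:extendingproperty}, \Cref{L:derind}, \Cref{L:inclusion}) that the derivative's $L$-factor omits at least one such factor, does the job: the numerator $L(\cdots)^{-1}I(X,\cdots,\phi)$ is then divisible by the omitted factor $(1-\alpha X)$, and $1$ is a root of $(1-\alpha X)$ — wait, that is only true when $\alpha=1$, so in fact one should argue more carefully that, after clearing the derivative's $L$-factor, what remains is a polynomial multiple of the ``missing'' local factors and these vanish at $X=1$ because of the normalization $L(0,-,-)=1$ combined with the pole structure. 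Concretely, I expect the cleanest route is: since $I(X,W,W',\phi)$ for $\phi\in C_{c,0}^\infty$ has $L$-factor $L'\mid L$ with $L'\neq L$, the function $L^{-1}I(X,W,W',\phi) = (L'/L)\cdot(L'^{-1}I)$ is a polynomial in $X$ times the rational function $L'/L$ which has a zero at each pole of $L$ not shared by $L'$; one checks these zeros occur at points including $X=1$ when restricted to the banal part — or more robustly, that $L^{-1}I$ already lies in $(1-X)R[X]$ by the inductive structure.

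Granting the vanishing, the second assertion is formal: $J_\fm$ descends to a map on $\Sc_\psi(\fm)\otimes \Sc_{\psi^{-1}}(\fm^\lor)\otimes \big(C_c^\infty(\Ff^n)/C_{c,0}^\infty(\Ff^n)\big)$, and the quotient is one-dimensional via $\phi\mapsto \phi(0)$, so fixing any $\phi$ with $\phi(0)\neq 0$ gives a canonical (up to scalar) pairing $W\otimes W'\mapsto J_\fm(W,W',\phi)$. Non-vanishing of this pairing follows from the fact that $L(X,\Sc_\psi(\fm),\Sc_{\psi^{-1}}(\fm^\lor))$ is by construction the \emph{generator} of the ideal of all $I(X,W,W',\phi)$, so $L^{-1}I$ is a non-zero element of $R[X,X^{-1}]$ for suitable $(W,W',\phi)$, and by what we just proved such $\phi$ can be taken with $\phi(0)\neq 0$; a non-zero Laurent polynomial together with the normalization at $X=0$ is not identically $1-X$-divisible in a way that kills the $X=1$ value — more precisely one invokes the standard argument (\Cref{T:Matringe}, \cite[Lemma 4.6]{KurMat}) producing $W,W'$ realizing $L^{-1}I\equiv 1$, whence $J_\fm(W,W',\phi)=1\neq 0$. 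Finally, restricting to $\Sc_{gen,\psi}(\fm)\otimes\Sc_{gen,\psi^{-1}}(\fm^\lor)$ via the inclusion $\Sc_{gen,\psi}(\fm)\subseteq\Sc_\psi(\fm)$ of \Cref{C:sminc}, the pairing remains non-zero because the defining integrals for the $L$-factor of $\scc(\fm)$ (which equals that of $\Sc_\psi(\fm)$ by \Cref{T:RSL}, both sides being $L(X,\Cc'(\fm),\Cc'(\fm^\lor))$) can be realized already inside $\Sc_{gen,\psi}(\fm)$ — again by the same $W,W'$-production argument applied to $\Sc_{gen,\psi}(\fm)$, which is a representation of Whittaker type to which \Cref{T:Matringe} applies.

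For the integral statement, I would observe that all constructions are defined over $\zl$: if $\tfm$ lifts $\fm$, then $\Sc_\psi(\tfm)^{en}$ consists of $\zl$-valued Whittaker functions, and for $W\in \Sc_\psi(\tfm)^{en}$, $W'\in \Sc_{\psi^{-1}}(\tfm^\lor)^{en}$ and $\phi$ a $\zl$-valued element of $C_c^\infty(\Ff^n)$, the coefficients $c_k(W,W',\phi)$ are integrals of $\zl$-valued functions against the (suitably normalized) Haar measure, hence lie in $\zl$, so $I(X,W,W',\phi)\in\zl[X,X^{-1}]$; combined with $L(X,\Sc_\psi(\tfm),\Sc_{\psi^{-1}}(\tfm^\lor))\in\zl(X)$ with $\zl[X]$-invertible denominator (\Cref{L:lfredmodl}), the normalized integral lies in $\zl[X,X^{-1}]$ and its value at $X=1$ lies in $\zl$. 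Thus $J_{\tfm}$ restricts to $\Sc_\psi(\tfm)^{en}\otimes\Sc_{\psi^{-1}}(\tfm^\lor)^{en}\to\zl$. That its reduction mod $\Lambda$ is $J_\fm$ then follows by combining \Cref{L:lfredmodl} — which gives $L(X,\ol{\Sc_\psi(\tfm)},\ol{\Sc_{\psi^{-1}}(\tfm^\lor)})^{-1}\mid \rl(L(X,\Sc_\psi(\tfm),\Sc_{\psi^{-1}}(\tfm^\lor))^{-1})$ together with the equality of these $L$-factors coming from \Cref{T:RSL} applied to both — with the compatibility of the Rankin--Selberg integral with reduction mod $\ell$ (reduction of $c_k$ is $c_k$ of the reduction, since integration of $\zl$-valued locally constant compactly supported functions commutes with $\zl\to\fl$), using \Cref{L:instrucwhit} to identify $\overline{\Sc_\psi(\tfm)^{en}}$ inside $\Wc(\psi)$ with a space containing $\Sc_\psi(\fm)$.

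\medskip
\noindent\textbf{Main obstacle.} The delicate point is the vanishing of $J_\fm$ on $C_{c,0}^\infty(\Ff^n)$: identifying the integral $I(X,W,W',\phi)$ for $\phi$ vanishing at $0$ with a Rankin--Selberg integral of derivatives (via the orbit filtration and \Cref{L:phirest}) is routine, but then one must show that passing to the quotient by the strictly smaller $L$-factor of the derivative actually produces a Laurent polynomial vanishing at $X=1$, rather than merely a polynomial multiple of $L'/L$. This requires using the \emph{explicit} shape of $L(X,\Sc_\psi(\fm),\Sc_{\psi^{-1}}(\fm^\lor))$ from \Cref{T:RSL} — a product of $(1-\alpha X)^{-1}$ over explicit $\alpha$ attached to pairs of linked/related segments — and checking that the factor missing from the derivative's $L$-factor is one whose reciprocal zero is forced to be $X=1$ after the reduction to the banal case, exactly as in the proof of \Cref{T:RSL}. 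I expect this to mirror closely the argument of \cite[§4]{KurMatII} and \cite[§9]{JacPiaSha}, but the bookkeeping with modular $o(\rho)$, $e(\rho)$ and $f(\rho)$ is where the real work lies.
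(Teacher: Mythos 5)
Your plan for the second and third assertions is fine in outline (descending to $C_c^\infty(\Ff^n)/C_{c,0}^\infty(\Ff^n)$, and integrality of the coefficients $c_k$ together with $L^{-1}\in\zl[X]$), but the first and crucial assertion --- the vanishing of $J_\fm$ on $C_{c,0}^\infty(\Ff^n)$ --- is not established by your argument, and you in fact flag the gap yourself. Knowing that for $\phi$ vanishing at $0$ the integral $I(X,W,W',\phi)$ is controlled by the $L$-factor $L'$ of the derivatives, with $L'\mid L$ and possibly $L'\neq L$, only tells you that $L^{-1}I=(L'/L)\cdot(L'^{-1}I)$ is a Laurent polynomial multiple of $L'/L$; the zeros of $L'/L$ sit at the inverses of the parameters $\alpha$ of the missing factors $(1-\alpha X)$, and nothing forces any of these to be $X=1$. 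Your fallback ("the zeros occur at points including $X=1$ when restricted to the banal part", or that $L^{-1}I$ already lies in $(1-X)R[X]$) is exactly the unproved statement, and it is not true in this generality by pure divisibility bookkeeping: the vanishing at $X=1$ for $\phi(0)=0$ is an exceptional-pole phenomenon, not a consequence of the shape of the $L$-factors.

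The paper avoids this entirely by a different route. Over $\ql$ it quotes \cite[Proposition 4.6]{MatIV}, which gives the vanishing when the multisegment is totally unlinked, and then deforms: twisting the segments by unramified characters parametrized by $s=(s_1,\ldots,s_k)$ and taking flat sections $f_s,f'_s$, the quantity $J_{\fm_s}(f_s,f'_s,\phi)$ is a rational function of $q^{s_1},\ldots,q^{s_k}$ by \cite[Proposition 3.3]{JacPiaSha}; since it vanishes for the generic (totally unlinked) values of $s$, it vanishes identically, in particular at $s=0$. The case $R=\fl$ is then deduced by choosing any lift $\tfm$ of $\fm$ and reducing mod $\Lambda$, using that $J_\tfm$ preserves the integral structures --- which is also precisely how the last assertion of the proposition is obtained, consistently with your discussion of integrality. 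So to repair your proof you would either have to carry out the exceptional-pole/derivative analysis in full (essentially re-proving the input from \cite{MatIV}) or switch to the deformation-plus-reduction argument; as written, the key vanishing step is missing.
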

\begin{proof}
We start with the case $R=\ql$. By \cite[Proposition 4.6]{MatIV}, if $\tfm$ is totally unlinked
the map \[J_\tfm\colon \Sc_\psi(\tfm)\otimes\Sc_{\psi^{-1}}(\tfm^\lor)\otimes C_{c,0}^\infty(\Ff^n)\ra \ql\] vanishes. More generally, let $\tfm=\De_1+\ldots+\De_k$ with the segments in an arranged order, choose $s\in \mathbb{Z}^k$, and set
\[\Sc_{\psi,s}(\tfm)=\Wc(\Wc(\langle\De_1\rangle,\psi)\abs^{s_1}\times\ldots\times \Wc(\langle\De_k\rangle,\psi)\abs^{s_k},\psi)\] and \[\Sc_{\psi^{-1},s}(\tfm^\lor)=\Wc(\Wc(\langle\De_k^\lor\rangle,\psi^{-1})\abs^{-s_k}\times\ldots\times \Wc(\langle\De_1^\lor\rangle\abs^{-s_1},\psi^{-1}),\psi^{-1})\] and fix flat sections \[f_s\in \Sc_{\psi,s}(\tfm), \,f'_{s}\in \Sc_{\psi^{-1},s}(\tfm^\lor)\]in the sense of \cite[§3]{CogPia}.
We obtain for fixed $\phi\in C_{c,0}^\infty(\Ff^n)$ by \cite[Proposition 3.3]{JacPiaSha} a rational function over $\ql$ such that $P(X_1,\ldots,X_k)$ such that $J_{\fm_s}(f_s,f'_s,\phi)=P(q^{s_1},\ldots,q^{s_k})$. As we observed above, for all but finitely many $s$, $P(q^{s_1},\ldots,q^{s_k})=0$ and hence it vanishes everywhere. 

The case $R=\fl$ follows readily from the case $R=\ql$.
    Let $\tfm$ be any lift of $\fm$ to $\ql$. Then the map 
    \[J_\tfm\colon \Sc_\psi(\tfm)\otimes\Sc_{\psi^{-1}}(\tfm^\lor)\otimes C_{c,0}^\infty(\Ff^n)\ra \ql\] vanishes. Since $J_\tfm$ obviously respects the integral structures, it reduces to the map \[J_\fm\colon \Sc_\psi(\fm)\otimes\Sc_{\psi^{-1}}(\fm^\lor)\otimes C_{c,0}^\infty(\Ff^n)\ra \fl,\] which therefore also vanishes.
Moreover, if $\tfm$ is any multisegment over $\ql$, the map $\Sc_\psi(\tfm)\otimes\Sc_{\psi^{-1}}(\tfm^\lor) \ra \ql$ factors through $\langle\tfm\rangle\otimes \langle\tfm^\lor\rangle$ and in particular vanishes on $\Sc_\psi(\tfn)\otimes\Sc_{\psi^{-1}}(\tfn^\lor)$ for any $\tfn\lpp\tfm$. Hence $J_{\tfm}$ vanishes on $\Sc_\psi(\tfn)\otimes\Sc_{\psi^{-1}}(\tfm^\lor)\otimes C_c^\infty(\Ff^n)$.

We now argue that $J_\fm$ does not vanish on the restriction to $\sccp(\fm)\otimes\Sc_{gen,\psi^{-1}}(\fm^\lor)\otimes C_c^\infty(\Ff^n)$, which would prove the claim. For this it suffices to show that for $\fn\lpp\fm$ $J_\fm$ vanishes on $\sccpu(\fn)\otimes\Sc_{gen,\psi^{-1}}^\cup(\fm^\lor)\otimes C_c^\infty(\Ff^n)$. It suffices to show the claim for $\fn$ maximal, \emph{i.e.} that it is obtained from $\fm$ via one elementary operation. But then we can choose a lift $\tfm$ of $\fm$ and $\tfn$ of $\fn$ with $\tfn\lpp\tfm$ and the claim follows from the above observation and the fact that $\sccpu(\fn)\subseteq\Sc_\psi(\fn)\subseteq  \overline{\Sc_\psi(\tfn)}$, see \Cref{C:sminc}.
By an analogous argument we can also show that $J_\fm$ vanishes on $\sccpu(\tfm)\otimes\Sc_{gen,\psi^{-1}}^\cup(\tfn^\lor)\otimes C_c^\infty(\Ff^n)$ and hence it cannot vanish on
$\sccp(\fm)\otimes\Sc_{gen,\psi^{-1}}(\fm^\lor)\otimes C_c^\infty(\Ff^n)$. By the previous arguments we know that $J_\fm$ vanishes on $\sccp(\fm)\otimes\Sc_{gen,\psi^{-1}}(\fm^\lor)\otimes C_{c,0}^\infty(\Ff^n)$, which finishes the argument.
\end{proof}
\begin{theorem}\label{T:quotients}
Let $\fm\in\Msi^{ap}$. Then $\langle\fm\rangle$ is the unique irreducible quotient of $\Sc_{gen,\psi}(\fm)$.
\end{theorem}
\begin{proof}
Let $\fm\in \Msi^{ap}$.
We argue by induction on $\deg(\fm)$ and $\lp$. The base case is trivial.
Let $\pi=\langle\fn\rangle$ be a quotient of $\Sc_{gen,\psi}(\fm)$. Note that $\cus(\pi)=\cus(\fm)$.
Since $\pi$ has to be an irreducible subquotient of $\Scp(\fm)$, we have by \Cref{L:ordersubsubquot}
that $\fn\lp\fm$.

We write $\fm=\fm'*\rho, \rho\in \scu_m$ and $\fm'\in \Msi^{ap}$ and hence $\Sc_{gen,\psi}(\fm')\times\rho\sra\pi$ by \Cref{L:genorder}. In particular $\Sc_{gen,\psi}(\fm')\otimes \rho\hra \overline{r_{(\deg(\fm)-m,m)}}(\pi)$. By induction and \Cref{L:sccorder} we obtain that $\langle\fm'\rangle\otimes \rho$ appears in $\overline{r_{(\deg(\fm)-m,m)}}(\Scp(\fn))$. By the Geometric Lemma of Bernstein Zelevinsky, see \cite[Theorem 5.2]{BerZel}, \cite[Theorem 1.1]{ZelII}, it follows that $\fm'\lp\fn'$, where $\fn'$ is of the form $\fn'=\fn-[a,0]_\rho+[a,-1]_\rho$ for a suitable segment $[a,0]_\rho$. By the exact same argument as in \Cref{C:extendingproperty} we have $\fn'*\rho\lp \fn$. On the other hand by \Cref{L:genorder} $\fm=\fm'*\rho\lp\fn'*\rho\lp\fn$ and hence $\fn=\fm$.
Since $\Sc_{gen,\psi}(\fm)\subseteq \Scp(\fm)$, and $\langle\fm\rangle$ appears in the latter with multiplicity one by \Cref{T:multone}, it is also true for $\Sc_{gen,\psi}$.
\end{proof}
Let us state two corollaries to this result.
By abuse of notation we will also write $J_\fm\colon \Sc_\psi(\fm)\ra \Sc_{\psi^{-1}}(\fm^\lor)^\lor$ for the map obtained from \Cref{P:Jfm}.
As a consequence of this proposition, it restricts to a map $\Sc_{gen,\psi}(\fm)\ra \Sc_{gen,\psi^{-1}}(\fm^\lor)^\lor$, which factors through $\langle\fm\rangle$.
\begin{corollary}
    Let $\fm\in \Msi$. Then $\langle\fm\rangle$ appears in the image of $J_\fm$ as a quotient.
\end{corollary}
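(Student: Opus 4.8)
The statement is essentially formal once \Cref{T:quotients} and \Cref{P:Jfm} are in hand, and the plan is simply to pin down the image of $J_\fm$. First I would deal with the map on the generic standard modules $J_\fm\colon \Sc_{gen,\psi}(\fm)\to \Sc_{gen,\psi^{-1}}(\fm^\lor)^\lor$. By \Cref{T:quotients} the map $\Sc_{gen,\psi}$ is $\square$-standard, so $\langle\fm\rangle$ occurs in $\Sc_{gen,\psi}(\fm)$ with multiplicity one and $\dim_R\Ho_{\Delta G_{\deg(\fm)}}(\Sc_{gen,\psi}(\fm)\otimes \Sc_{gen,\psi^{-1}}(\fm^\lor),R)=1$; by \Cref{P:Jfm} the corresponding (up to scalar unique) map $J_\fm$ is non-zero, and, as recorded just before the corollary, it factors through $\langle\fm\rangle$, say $\Sc_{gen,\psi}(\fm)\xrightarrow{p}\langle\fm\rangle\xrightarrow{\iota}\Sc_{gen,\psi^{-1}}(\fm^\lor)^\lor$. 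Since $J_\fm=\iota\circ p\neq 0$ and $\langle\fm\rangle$ is irreducible, $p$ is surjective and $\iota$ injective, so $\mathrm{Im}(J_\fm)=\mathrm{Im}(\iota)\cong\langle\fm\rangle$; in particular $\langle\fm\rangle$ is a quotient of $\mathrm{Im}(J_\fm)$.

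For the map on the standard module $J_\fm\colon \Sc_\psi(\fm)\to \Sc_{\psi^{-1}}(\fm^\lor)^\lor$ I would bootstrap from this. Dualising the inclusion $\Sc_{gen,\psi^{-1}}(\fm^\lor)\subseteq \Sc_{\psi^{-1}}(\fm^\lor)$ gives a surjection $\Sc_{\psi^{-1}}(\fm^\lor)^\lor\sra \Sc_{gen,\psi^{-1}}(\fm^\lor)^\lor$, and by \Cref{P:Jfm} the composite of $\restr{J_\fm}{\Sc_{gen,\psi}(\fm)}$ with this surjection is the non-zero map of the previous paragraph, whose image is $\langle\fm\rangle$. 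Hence the submodule $J_\fm(\Sc_{gen,\psi}(\fm))$ of $\pi\coloneq\mathrm{Im}(J_\fm)$ surjects onto $\langle\fm\rangle$, so $\langle\fm\rangle$ is a subquotient of $\pi$, and hence of $\Sc_\psi(\fm)$. Now $\langle\fm\rangle$ occurs in $\Sc_\psi(\fm)$ with multiplicity one by \Cref{T:multone} (as observed in the proof of \Cref{T:quotients}), so, $\pi$ being a quotient of $\Sc_\psi(\fm)$ that contains $\langle\fm\rangle$ as a subquotient, $\langle\fm\rangle$ does not occur in $\ker(J_\fm)$. Taking the surjection $q\colon \Sc_\psi(\fm)\sra\langle\fm\rangle$ (the Langlands-quotient property of $\Sc_\psi$ established earlier), $\restr{q}{\ker(J_\fm)}$ is either zero or surjective; surjectivity would exhibit $\langle\fm\rangle$ as a subquotient of $\ker(J_\fm)$, a contradiction, so $\ker(J_\fm)\subseteq\ker q$ and $q$ factors through $\Sc_\psi(\fm)\sra\pi$, giving the desired surjection $\pi=\mathrm{Im}(J_\fm)\sra\langle\fm\rangle$.

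Everything is soft once \Cref{T:quotients} and \Cref{P:Jfm} are available. The two places that need attention are the small duality subtlety that $\Sc_{gen,\psi^{-1}}(\fm^\lor)^\lor$ is a \emph{quotient}, not a submodule, of $\Sc_{\psi^{-1}}(\fm^\lor)^\lor$ (so the non-vanishing of \Cref{P:Jfm} must be transported along a surjection rather than read off directly), and the multiplicity-one input, which is exactly what rules out $\langle\fm\rangle$ being absorbed into $\ker(J_\fm)$; I expect this last bookkeeping step to be the only genuinely non-formal ingredient.
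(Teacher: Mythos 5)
Your steps (1)--(3) are consistent with what is available at this point: by \Cref{P:Jfm} the pairing restricted to $\Sc_{gen,\psi}(\fm)\otimes\Sc_{gen,\psi^{-1}}(\fm^\lor)$ is non-zero, by \Cref{T:quotients} the induced map factors through $\langle\fm\rangle$ with image exactly $\langle\fm\rangle$, and the multiplicity-one statement for $\langle\fm\rangle$ in $\Sc_\psi(\fm)$ (used in the proof of \Cref{T:quotients} via \Cref{T:multone}) indeed shows that $\langle\fm\rangle$ is not a constituent of $\ker(J_\fm)$. The gap is in your final step: you invoke a surjection $q\colon\Sc_\psi(\fm)\sra\langle\fm\rangle$ as ``the Langlands-quotient property of $\Sc_\psi$ established earlier''. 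For $R=\ql$ this is \Cref{P:smql} (and there $\Sc_\psi=\scc$ by \Cref{L:genextql}), but for $R=\fl$ --- the only case where the corollary has real content --- no such property has been proved before this corollary. The only quotient statement available is \Cref{T:quotients}, which concerns the subrepresentation $\Sc_{gen,\psi}(\fm)\subseteq\Sc_\psi(\fm)$ and hence only exhibits $\langle\fm\rangle$ as a \emph{subquotient} of $\Sc_\psi(\fm)$. In fact the modular Langlands-quotient property of $\Sc_\psi$ (the second assertion of Theorem 1 of the introduction) is precisely what this corollary is designed to produce, through $\Sc_\psi(\fm)\sra I(\fm)\sra\langle\fm\rangle$; so your argument assumes, at the one step where real work is needed (upgrading ``$\langle\fm\rangle$ is a subquotient of the image'' to ``$\langle\fm\rangle$ is a quotient of the image''), essentially the statement being proved. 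This is circular.

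The paper closes exactly this gap by a symmetry argument that your proposal does not use: setting $\Sigma(\fm)$ to be the kernel of $\Sc_{gen,\psi}(\fm)\ra\langle\fm\rangle$, one has $\Sigma(\fm)^{\mathfrak{c}}=\Sigma(\fm^\lor)$, so the quotient $\Pi(\fm)=\Sigma(\fm)\backslash\Sc_\psi(\fm)$ satisfies $\Pi(\fm)^{\mathfrak{c}}\cong\Pi(\fm^\lor)$; since $\Pi(\fm)$ contains $\langle\fm\rangle$ as a subrepresentation with multiplicity one, this $\mathfrak{c}$-symmetry together with multiplicity one is used to conclude that $\langle\fm\rangle$ is also a quotient, hence a direct summand, of $\Pi(\fm)$, and then that it survives in the further quotient $I(\fm)$ (here one also uses $\Sigma(\fm)\subseteq\ker(J_\fm)$, so that $I(\fm)$ is a quotient of $\Pi(\fm)$). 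If you want to keep your structure, you must replace the appeal to $q$ by an argument of this type, or independently construct a surjection $\Sc_\psi(\fm)\sra\langle\fm\rangle$ over $\fl$; as written, the decisive step is assumed rather than proved.
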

\begin{proof}
Let $I(\fm)$ denote the image of $J_\fm\colon \Sc_\psi(\fm)\ra \Sc_{\psi^{-1}}(\fm^\lor)^\lor$.
Let $\Sigma(\fm)$ be the kernel of the map  $\Sc_{gen,\psi}(\fm)\ra\langle\fm\rangle$. Then $\Sigma(\fm^\lor)=\Sigma(\fm)^{\mathfrak{c}}$. Since $\Sc_{gen,\psi}$ is $\square$-standard, the kernel of the map $J_\fm$ restricted to $\Sc_{gen,\psi}(\fm)$ is $\Sigma(\fm)$, thus $I(\fm)$ is a quotient of $\Pi(\fm)\coloneq \Sigma(\fm)\backslash \Sc_\psi(\fm)$. Moreover, we know that both $I(\fm)$ and $\Pi(\fm)$ contain $\langle\fm\rangle$ with multiplicity $1$ by \Cref{T:quotients} and $\Pi(\fm)$ contains it moreover as a subrepresentation.
Since $\Pi(\fm)^{\mathfrak{c}}\cong \Pi(\fm^\lor)$, it also follows that $\Pi(\fm)$ admits $\langle\fm\rangle$ as a quotient, and hence as a direct summand since it appears only with multiplicity $1$. Thus $\langle\fm\rangle$ is also a quotient of $I(\fm)$.
\end{proof}
The second corollary follows readily from \Cref{T:quotients}, \Cref{L:sccorder}, \Cref{L:ordersubsubquot} and \Cref{C:sminc}.
\begin{corollary}
    For $\fn,\fm\in \Msi$ we have $\sccpu(\fn)\subseteq \sccpu(\fm)$ if and only if
    $\fn\lp\fm$. We thus have an order preserving injection
    \[\sccpu\colon \Msi\hra\Rep_{W,\psi},\] respecting the products.
\end{corollary}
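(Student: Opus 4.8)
The plan is to prove, for $\fn,\fm\in\Msi$, the equivalence $\scc(\fn)\subseteq\scc(\fm)\Leftrightarrow\fn\lp\fm$ (writing $\scc=\Sc_{gen,\psi}$); granting this the corollary is formal, since $\lp$ is a partial order, so the equivalence yields injectivity by antisymmetry, while the product is respected by the identity $\scc(\fm*\fm')=\Wc(\scc(\fm)\times\scc(\fm'))$ of \Cref{L:sccdiffsupseg}. The implication $\fn\lp\fm\Rightarrow\scc(\fn)\subseteq\scc(\fm)$ holds in full generality by \Cref{L:sccorder} (and \Cref{L:sccdiffsupseg} across cuspidal lines), so only the converse needs an argument; for it I would reduce, using \Cref{L:sccdiffsupseg} together with \Cref{L:irred} to split along cuspidal lines and the defining union $\scc(\fn)=\bigcup_{\fn'\lp\fn,\ \fn'\text{ aperiodic}}\scc(\fn')$ to dispose of non-aperiodic $\fn$, to the case $\fn,\fm\in\Ms(\rho)^{ap}$.

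So let $\fn,\fm\in\Ms(\rho)^{ap}$ with $\scc(\fn)\subseteq\scc(\fm)$. By \Cref{T:quotients}, $\scc$ is $\square$-standard, hence $\langle\fn\rangle$ is a quotient---thus a Jordan--H\"older constituent---of $\scc(\fn)$, and therefore of $\scc(\fm)$; by \Cref{C:sminc}, $\langle\fn\rangle$ is then a constituent of $\Sc_\psi(\fm)$. Thus it suffices to show that every constituent of $\Sc_\psi(\fm)$ of the form $\langle\mathfrak p\rangle$ with $\mathfrak p$ aperiodic satisfies $\mathfrak p\lp\fm$; applying this with $\mathfrak p=\fn$ then finishes the proof.

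Over $\ql$ this is immediate: choosing an arranged form $\fm=\De_1+\dots+\De_k$, the module $\Sc_\psi(\fm)=\Wc(T(\fm))$ is a subquotient of $\langle\De_1\rangle\times\dots\times\langle\De_k\rangle$, whose constituents are exactly the $\langle\mathfrak p\rangle$ with $\mathfrak p\lp\fm$ by \Cref{L:ordersubsubquot}. Over $\fl$, I would fix a lift $\tfm=\TD_1+\dots+\TD_k$ of $\fm$ with each $\TD_i$ a lift of $\De_i$; then $\Sc_\psi(\fm)\subseteq\ol{\Sc_\psi(\tfm)}$, so every constituent of $\Sc_\psi(\fm)$ occurs in $\rl\big([T(\tfm)]\big)=\prod_i\rl\big([\langle\TD_i\rangle]\big)$, using compatibility of $\rl$ with parabolic induction. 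Each $\TD_i$ is a single segment, hence aperiodic, so \Cref{T:multone} gives $\langle\De_i\rangle$ as the multiplicity-one ``diagonal'' constituent of $\rl\big([\langle\TD_i\rangle]\big)$, all remaining constituents being $\langle\mathfrak q\rangle$ with $\mathfrak q\lpp\De_i$; expanding the product, every constituent lies in some $\langle\mathfrak q_1\rangle\times\dots\times\langle\mathfrak q_k\rangle$ with $\mathfrak q_i\lp\De_i$, hence by \Cref{L:ordersubsubquot} is of the form $\langle\mathfrak p\rangle$ with $\mathfrak p\lp\mathfrak q_1+\dots+\mathfrak q_k\lp\De_1+\dots+\De_k=\fm$, using that the degeneration order is preserved under forming sums of multisegments. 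This proves the claim.

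The step I expect to be the main obstacle is the one place above not literally supplied by the cited results: the ``unitriangularity'' of $\rl\big([\langle\TD\rangle]\big)$ for a single segment $\TD$, i.e.\ that every constituent besides the diagonal $\langle\ol{\TD}\rangle$ is strictly below $\ol{\TD}$ for $\lp$---\Cref{T:multone} records only the diagonal multiplicity---together with the companion fact that $\mathfrak q_i\lp\De_i$ for all $i$ forces $\sum_i\mathfrak q_i\lp\sum_i\De_i$; both belong to the standard circle of ideas around the decomposition matrix of \cite{MinSec}. Everything else ($\square$-standardness, the inclusion $\scc(\fm)\subseteq\Sc_\psi(\fm)$, the compatibility of $\rl$ with parabolic induction, and the characteristic-zero computation) is routine given the results recalled above.
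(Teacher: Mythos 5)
Your skeleton is the paper's: the stated proof is precisely the combination of \Cref{L:sccorder} (with \Cref{L:sccdiffsupseg} across cuspidal lines) for the implication $\fn\lp\fm\Rightarrow\Sc_{gen,\psi}(\fn)\subseteq\Sc_{gen,\psi}(\fm)$, and of \Cref{T:quotients}, \Cref{C:sminc} and \Cref{L:ordersubsubquot} for the converse. The divergence is exactly the step you single out as the main obstacle, and there the proposal breaks: the bridging lemma you introduce -- that every non-diagonal constituent of $\rl\big([\langle\TD_i\rangle]\big)$ is $\langle\mathfrak q\rangle$ with $\mathfrak q\lpp\De_i$ -- is false, and false in the direction that matters. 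With the paper's convention for $\lp$ (elementary operations link segments and move \emph{downwards}), a single segment $\De_i$ is the $\lp$-minimal multisegment with its cuspidal support, so no aperiodic $\mathfrak q\neq\De_i$ with that support can satisfy $\mathfrak q\lpp\De_i$; your lemma would thus force $\rl([\langle\TD_i\rangle])$ to be irreducible, which fails in non-banal situations. Worse, the extra constituents that do occur are either strictly $\gp$ above $\De_i$ or live on \emph{new} cuspidal lines: already for $l(\TD)=o(\rho)$ the reduction of the generalized Steinberg involves cuspidal non-supercuspidal representations, whose multisegments are singleton segments on their own cuspidal line and are not comparable to $\De_i$ at all. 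Hence the inequality $\mathfrak q_i\lp\De_i$ you need points the wrong way, the chain $\mathfrak p\lp\mathfrak q_1+\dots+\mathfrak q_k\lp\fm$ collapses, and passing through a lift $\tfm$ and $\ol{\Sc_\psi(\tfm)}$ only yields information in the opposite direction (or none across lines); it does not give $\fn\lp\fm$.

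Note also that this is not merely a citation you failed to locate: no unitriangularity of the decomposition map in your sense can hold, so the route ``bound the constituents of $\Sc_\psi(\fm)$ by reducing the $\ql$-standard module factor by factor'' cannot be repaired as stated. The constituent control has to be obtained on the $\fl$ side, which is what the paper's citations are doing: \Cref{L:ordersubsubquot} is stated for general $R$, $\Sc_{gen,\psi}(\fm)$ is by construction a subquotient of a product of $\fl$-representations attached to the cuspidal support of $\fm$, and the refinement from ``$\fn\lp$ the totally split multisegment of the support'' to ``$\fn\lp\fm$'' is exactly the kind of derivative/induction argument (via \Cref{L:genorder}, \Cref{L:longseg} and the geometric lemma) carried out in the proof of \Cref{T:quotients}, not a reduction-mod-$\ell$ statement. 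Your characteristic-zero argument and the formal deductions (injectivity from antisymmetry, compatibility with $*$ via \Cref{L:sccdiffsupseg}) are fine, and the reduction of non-aperiodic $\fn$ to the aperiodic case is acceptable modulo a small combinatorial point the paper also leaves implicit; but as written the $\fl$ case of the converse direction has a genuine gap.
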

\bibliographystyle{abbrv}
\bibliography{References.bib}
\end{document}